\newtheorem{Theorem}{Theorem}[section]
\newtheorem*{Main Theorem}{Main Theorem}
\newtheorem*{Theorem A}{Theorem A}
\newtheorem*{Theorem B}{Theorem B}
\newtheorem*{Theorem C}{Theorem C}
\newtheorem*{Theorem D}{Theorem D}
\newtheorem{Definition}[Theorem]{Definition}
\newtheorem{Proposition}[Theorem]{Proposition}
\newtheorem{Lemma}[Theorem]{Lemma}
\newtheorem{Question}{Question}
\newtheorem{Corollary}[Theorem]{Corollary}
\theoremstyle{remark}
\newtheorem{Remark}{Remark}[section]
 \def\RR{{\mathbb R}}
\def\si{\sigma}
\def\La{\Lambda}
\def\la{\lambda}
\def\Cl{{\rm Cl}}
\def\cB{\mathcal{B}}  \def\cH{\mathcal{H}} \def\cN{\mathcal{N}} \def\cT{\mathcal{T}}
\def\cC{\mathcal{C}}    \def\cU{\mathcal{U}}
\def\cD{\mathcal{D}}   \def\cP{\mathcal{P}} \def\cV{\mathcal{V}}
\def\cE{\mathcal{E}}    \def\cW{\mathcal{W}}
\def\cF{\mathcal{F}}
\def\xX{\mathscr{X}}
\def\dim{\operatorname{dim}}
\def\ind{\operatorname{Ind}}
\def\Sing{\operatorname{Sing}}
\def\orb{\operatorname{Orb}}
\def\vep{{\varepsilon}}
\def\e{\mathrm{e}}
\def\wt{\widetilde}
\def\len{\operatorname{len}}
\numberwithin{equation}{section}
\title{An example derived from Lorenz attractor}
\author{Ming Li, Fan Yang, Jiagang Yang, Rusong Zheng\thanks{M. Li is supported by NSF of China (11971246, 12071231) and National Key R$\&$D Program of China (2020YFA0713300); J. Yang is supported by NSF of China (12271538, 11871487, 12071202), CNPq of Brazil, FAPERJ of Brazil, and PRONEX of Brazil; R. Zheng is supported by NSF of China (12071007, 12101293).}}
\begin{document}
	
\maketitle

\begin{abstract}
We consider a DA-type surgery of the famous Lorenz attractor in dimension 4. This kind of surgeries have been firstly used by Smale \cite{Sm67} and Ma\~n\'e \cite{Ma} to give important examples in the study of partially hyperbolic systems. Our construction gives the first example of a singular chain recurrence class which is Lyapunov stable, away from homoclinic tangencies and exhibits robustly heterodimensional cycles. Moreover, the chain recurrence class has the following interesting property: there exists robustly a 2-dimensional sectionally expanding subbundle (containing the flow direction) of the tangent bundle such that it is properly included in a subbundle of the finest dominated splitting for the tangent flow.
\end{abstract}

\section{Introduction}

\subsection{Motivation}

In the study of differentiable dynamical systems, a central problem is to understand most of the systems. From a topological viewpoint, this could mean to characterize an open and dense subset, or at least a residual subset, of all systems.
It was once conjectured by Smale in the early 1960's that an open and dense subset of all systems should be comprised of uniformly hyperbolic ones. Recall that a dynamical system is {\em uniformly hyperbolic} if its limit set consists of finitely many transitive hyperblic sets and without cycles in between. The first counterexamples to this conjecture were given by Abraham and Smale \cite{AS} in the $C^1$ topology, and by Newhouse \cite{Ne,Ne2} in the $C^2$ topology. These examples give the following two obstructions to hyperbolicity:
\begin{enumerate}
  \item {\em heterodimensional cycle}\,: there exist two hyperbolic periodic orbits of different indices such that the stable manifold of one periodic orbit intersects the unstable manifold of the other and vice versa;
  \item {\em homoclinic tangency}\,: there exists a hyperbolic periodic orbit whose stable and unstable manifolds have a nontransverse intersection.
\end{enumerate}
Bifurcations of these two mechanisms can exhibit very rich dynamical phenomena. For example, a heterodimensional cycle associated to a pair of periodic saddles with index $i$ and $j=i+1$ can be perturbed to obtain a {\em robust heterodimensional cycle} \cite{BD}: there exist transitive hyperbolic sets $\La_1$ and $\La_2$ with different indices such that the stable manifold of $\La_1$ meets the unstable manifold of $\La_2$ in a $C^1$ robust way, and vice versa. For more results, see e.g. \cite{PT,D,BDPR}.
Trying to give a global view of all dynamical systems, Palis \cite{Pa90, Pa00, Pa05, Pa08} conjectured in the 1990’s that the uniformly hyperbolic systems and the systems with heterodimensional cycles or homoclinic tangencies form a dense subset of all diffeomorphisms.
The Palis conjecture has been proved by Pujals and Sambarino \cite{PS} for $C^1$ diffeomorphisms on surfaces. Moreover, a weaker version, known as the weak Palis conjecture, was shown to hold for $C^1$ diffeomorphisms on any closed manifold \cite{BGW, C}.
When it comes to flows, or vector fields, the Palis conjecture has to be recast so as to take into consideration the {\em singularities}.
There have been many progresses on this topic, see e.g. \cite{ARH,GY,CY18}. Also, related conjectures have been given by Bonatti \cite{B} in his program for a global view of $C^1$ systems.

The conjectures by Palis and Bonatti have lead to the study of systems away from homoclinic tangencies and/or heterodimensional cycles. In the $C^1$ topology, a diffeomorphism (or a vector field) is said to be {\em away from homoclinic tangencies} if every system in a $C^1$ neighborhood exhibits no homoclinic tangency. It is now well understood \cite{W02,Go} that for a diffeomorphism away from homoclinic tangencies there exist dominated splittings of the tangent bundle over preperiodic sets. More delicate results can be obtained in the $C^1$ generic setting, see e.g. \cite{CSY}. Also, it is known \cite{LVY} that every $C^1$ diffeomorphism away from homoclinic tangencies is entropy expansive, and it follows that there exists a measure of maximum entropy.
When considering diffeomorphisms away from homoclinic tangencies {\em and} heterodimensional cycles, it is shown in \cite{CP} that a $C^1$ generic system is essentially hyperbolic. See also \cite{C2}.

While there have been plenty of results on diffeomorphisms away from homoclinic tangencies (and heterodimensioinal cycles), parallel studies on vector fields are rare.
Regarding the Palis conjecture, it is shown \cite{CY18} that on every 3-manifold the singular hyperbolic vector fields form a $C^1$ open and dense subset of all vector fields away from homoclinic tangencies. Also in dimension 3, the weak Palis conjecture has been proved for $C^1$ vector fields \cite{GY}. For higher dimensional (dim $\geq 3$) vector fields, studies are mostly limited to either the singular hyperbolic vector fields \cite{MPP} or the star vector fields, see e.g. \cite{SGW,PYY21,CY,dL,BdL}. General vector fields away from homoclinic tangencies (not star) on a higher dimensional manifold are rarely studied. In \cite{GYZ}, for general higher dimensional flows away from homoclinic tangencies, it is shown that $C^1$ generically a nontrivial Lyapunov stable chain recurrence class contains periodic orbits and hence is a homoclinic class. Their result leaves the possibility that the periodic orbits in the chain recurrence class can have different indices. When this happens robustly, such a chain recurrence class shall exhibit a robust heterodimensional cycle \cite{BD} and the flow can not be star.
Apart from this result and possibly a handful others, the world of general higher dimensional vector fields away from homoclinic tangencies is far from being understood.

Inspired by the result of \cite{GYZ}, we notice that there is no known example of a singular flow which is away from homoclinic tangencies and is not singular hyperbolic or star.
The aim of this paper is to provide such an example, and to open a door to the world of general higher dimensional vector fields away from homoclinic tangencies.
Hopefully it will also attract interests to the study of higher dimensional vector fields.

As a by-product, our example exhibits an interesting property which is not observed before: the tangent bundle of the obtained chain recurrence class is shown to admit robustly an invariant subbundle (containing the flow direction), which is {\em properly} included in a subbundle of the finest dominated spitting of the tangent bundle. We shall call such an invariant subbundle {\em exceptional} (see Definition \ref{def.exceptional-bundle}).
Note that there can not be any exceptional subbundle for diffeomorphisms away from homoclinic tangencies: robustness of an invariant subbundle implies a corresponding dominated spitting of the tangent bundle. Thus the existence of exceptional subbundles may be counted as an essential difference between diffeomorphisms and (singular) vector fields, and is worth studying.

\subsection{Precise statement of our result}

Let $M$ be a compact Riemannian manifold without boundary.
Given a $C^1$ vector field on $M$, it generates a $C^1$ flow $\phi^X_t$, or simply $\phi_t$, on the manifold. A point $x\in M$ with $X(x)=0$ is called a {\em singularity} of $X$. Any point other than singularities is called a {\em regular} point. A regular point $x$ satisfying $\phi_T(x)=x$ for some $T>0$ is called a {\em periodic} point, and its orbit $\orb(x)=\{\phi_t(x): t\in\RR\}$ will be called a {\em periodic orbit}. Singularities and periodic orbits are called {\em critical elements}.

Let $\La$ be a compact invariant set of the flow $\phi_t$. It is called {\em chain transitive} if for any two points $x,y\in\Lambda$ and any constant $\vep>0$, there exist a sequence of points $x_0=x,x_1,\ldots,x_n=y$ in $\Lambda$ and a sequence of positive numbers $t_0, t_1,\ldots,t_{n-1}\geq 1$, such that $d(\phi_{t_i}(x_i),x_{i+1})<\vep$ for all $i=0,1,\ldots,n-1$. If $\Lambda$ is {\em maximally chain transitive}, i.e. it is chain transitive and is not a proper subset of any other chain transitive set, then $\Lambda$ is called a {\em chain recurrence class}. The union of all chain recurrence classes is called the chain recurrent set, denoted by $\mathrm{CR}(X)$. For any point $x$ in $\mathrm{CR}(X)$, we denote by $C(x,X)$ the chain recurrence class containing $x$.

Denote by $\Phi_t$ the tangent flow of $X$, i.e. $\Phi_t=D\phi_t$.
A $\Phi_t$-invariant subbundle $E$ of $T_{\La}M$ with $\dim E\geq 2$ is called {\em sectionally expanded} if there exists constants $C\geq 1$, $\lambda>0$ such that for any $x\in \La$ and any 2-dimensional subspace $S\subset E(x)$, it holds
\[|\det(\Phi_{-t}|_S)|\leq C\e^{-\la t}, \quad \forall t\geq 0.\]
A $\Phi_t$-invariant splitting $E\oplus F\subset T_{\La}M$ is called {\em dominated} if there exists $C\geq 1$, $\lambda>0$ such that for any $x\in \La$, it holds
\[\|\Phi_t|_{E(x)}\|\cdot\|\Phi_{-t}|_{F(\phi_t(x))}\|\leq C\e^{-\la t},\quad \forall t\geq 0.\]
We also say that a $\Phi_t$-invariant splitting $E_1\oplus E_2\oplus\cdots\oplus E_k\subset T_{\La}M$ is dominated if $(E_1\oplus\cdots\oplus E_i)\oplus (E_{i+1}\oplus\cdots\oplus E_k)$ is dominated for every $i=1,\ldots,k-1$.
The {\em finest dominated splitting} of $T_{\La}M$ is a $\Phi_t$-invariant splitting $T_{\La}M=E_1\oplus E_2\oplus \cdots\oplus E_k$ such that it is dominated and no subbundle $E_i$ ($i=1,\ldots,k$) can be decomposed further into a dominated splitting.

\begin{Definition}\label{def.exceptional-bundle}
  Let $\La$ be a chain transitive compact invariant set of a $C^1$ vector field $X$, a subbundle $E\subset T_{\La}M$ is called {\em exceptional} if it satisfies all the following properties:
\begin{itemize}
  \item $\dim E\geq 2$, and $X(x)\in E(x)$ for any $x\in\La$;
  \item $E$ is invariant for the tangent flow;
  \item $E$ is properly included in a subbundle of the finest dominated splitting of $T_{\La}M$.
\end{itemize}
\end{Definition}
In particular, if $E\subset T_{\La}M$ is exceptional, it is not a subbundle of any dominated splitting of $T_{\La}M$.
Recall that a compact invariant set $\La$ is called {\em Lyapunov stable} if for any neighborhood $U$ of $\La$ there exists a neighborhood $V$ of $\La$ such that $\overline{\phi_t(V)}\subset U$ for all $t\geq 0$.
Let $\xX^1(M)$ be the set of all $C^1$ vector fields on $M$, endowed with the $C^1$ topology. We denote by $\cH\cT$ the set of all $C^1$ vector fields on $M$ exhibiting a homoclinic tangency. Then $\xX^1(M)\setminus\overline{\cH\cT}$ consists of all $C^1$ vector fields away from homoclinic tangencies. 
\begin{Theorem A}
On every 4-dimensional closed Riemannian manifold $M$ there exists a non-empty open set $\cV\subset \xX^1(M)\setminus\overline{\cH\cT}$ and an open set $U\subset M$ such that for any $X\in\cV$, one has $\phi^X_t(\overline{U})\subset U$ for any $t>0$, and
\begin{itemize}
  \item there is a unique singularity $\si_X\in U$ and it is hyperbolic;
  \item $C(\si_X,X)$ is the unique Lyapunov stable chain recurrence class contained in $U$;
  \item $C(\si_X,X)$ contains a robust heterodimensional cycle;
  \item the tangent bundle $T_{C(\si_X,X)}M$ contains a 2-dimensional subbundle $E^X$ which is exceptional and sectionally expanded, and it varies upper semi-continuously with respect to $X$.
\end{itemize}
\end{Theorem A}

\begin{Remark}
  Since a robust heterodimensional cycle yields non-hyperbolic periodic orbits by arbitrarily small $C^1$ perturbations,
  the chain recurrence class $C(\si_X,X)$ for any $X\in\cV$ is not singular hyperbolic or star.
\end{Remark}
\begin{Remark}
  In \cite{BLY}, a non-empty $C^1$ open set of vector fields having an attractor containing singularities of different indices is constructed on a given 4-manifold. Their example exhibits also a robust heterodimensional cycle, but is accumulated by vector fields with homoclinic tangencies.
\end{Remark}

\subsection{Idea of the construction}
The example (in Theorem A) is derived from Lorenz attractor in the sense that the construction begins with the Lorenz attractor and uses a DA-type surgery. The Lorenz attractor, discovered by the meteorologist E. Lorenz in the early 1960's \cite{Lo}, is among the most important examples of singular flows. It is a chaotic attractor given by a simple group of three ordinary differential equations. We will consider the geometric model of the Lorenz attractor \cite{ABS1, GW}, which is equivalent to the original attractor for classical parameters \cite{T}. The DA surgery was introduced by Smale \cite{Sm67} as a way of constructing nontrivial basic sets, see also \cite{Wi}. It has been used to give many inspiring and important examples, especially in the study of partially hyperbolic systems, see e.g. \cite{Ma,BV}.

In the following we give some ideas of the construction.

We begin with a vector field $X^0$ on the 3-ball $B^3$ having a Lorenz attractor $\La_0$. Then we consider a vector field $X(x,s)=(X^0(x),-\theta s)$ on the product $B^3\times [-1,1]$, where $(x,s)$ are coordinates on $B^3\times [-1,1]$ and $\theta>0$ is a constant. In particular, the dynamics is contracting along fibers. Note that $\La=\La_0\times\{0\}$ is an attractor and it contains a periodic orbit $P$.

Now the idea is to modify the vector field $X$ in a neighborhood of the periodic orbit $P$ so that one obtains a robust heterodimensional cycle. For this purpose we combine a DA-type surgery with the construction of a blender \cite{BDV}. Roughly speaking, a blender is a hyperbolic set which verifies some specific geometric properties so that it unstable manifold looks like a manifold of higher dimension. It naturally appears in the unfolding of a heterodimensional cycle of two periodic orbits and can be used to obtain robust heterodimensional cycles.

For this construction, we need to show that the obtained vector field is away from homoclinic tangencies. This requires a careful choice of the contraction rate $\theta$ along the fibers. Moreover, we need to show that the singularity and the heterodiemnsional cycle are contained in the same chain recurrence class. This constitutes the most difficult part of our proof.

\subsection{Further discussions}
For the example in Theorem A, we would like to raise the following questions.
\begin{Question}
  Is the chain recurrence class $C(\si_Y,Y)$ (robustly) transitive? Or even isolated?
\end{Question}

Apart from topological properties, one can also studies ergodic properties of the example. In \cite{SYY} it is proved that the entropy function for singular flows away from homoclinic tangencies is upper semi-continuous with respect to both invariant measures and the flows, provided that the limiting measure is not supported on singularities.
\begin{Question}
  Is the entropy function upper semi-continuous for the chain recurrence class $C(\si_Y,Y)$ in Theorem A?
\end{Question}
One may even consider existence of SRB measures or physical measures.
However, let us mention one difficulty in the study of this example. In contrast to the Lorenz attractor or a singular hyperbolic attractor, the usual analysis of the return map to a global cross-section may not be applicable in our example. This is because the chain recurrence class in our example admits a partially hyperbolic splitting $E^s\oplus F$, where the 3-dimensional center-unstable subbundle $F$ contains an exceptional 2-dimensional subbundle and is not sectionally expanded.

Apart from the present example, it may also be interesting to consider a DA-type surgery on the two-sided Lorenz attractor constructed by Barros, Bonatti and Pacifico \cite{BBP}.

The rest of this paper is organized as follows. We present some preliminaries of flows in Section \ref{sect.preliminaries} and give the construction of the example in Section \ref{sect.construction}. The proof of Theorem A is also given in Section \ref{sect.construction}, assuming some robust properties that will be proved in subsequent sections: in Section \ref{sect.away-from-ht}  we show that the example is away from homoclinic tangencies; then in Section \ref{sect.sectional-expanding} we show existence of an exceptional subbundle; finally in Section \ref{sect.ls} we show that the singularity and the heterdimensional cycle are contained in the same chain recurrence class.

\section{Preliminaries}\label{sect.preliminaries}
In this section we give some preliminaries on vector fields, including hyperboicity, linear Poincar\'e flow, fundamental limit, and so on.

Let $M$ be a compact Riemannian manifold without boundary. Let $X$ be a $C^1$ vector field on $M$.
Denoted by $\Sing(X)$ the set of singularities, i.e. $\Sing(X)=\{x\in M: X(x)=0\}$.
For any point $x\in M$, we denote by $\langle X(x)\rangle$ the subspace of $T_xM$ that is generated by $X(x)$. Note that $\langle X(x)\rangle$ is a one-dimensional subspace of $T_xM$ when $x\not\in \Sing(X)$.
To simplify notations, we also denote $l^X_x=\langle X(x)\rangle$ for $x\notin\Sing(X)$.

\subsection{Hyperbolicity}

Recall that the flow and the tangent flow generated by $X$ are denoted as $\phi_t$ and $\Phi_t$, respectively.
Let $\La$ be a compact $\phi_t$-invariant set and $E\subset T_{\La}M$ a $\Phi_t$-invariant subbundle. The dynamics $\Phi_t|_E$ is {\em contracting}, or $E$ is {\em contracted}, if there exists constants $C\geq 1$, $\lambda >0$, such that
\[\|\Phi_t|_{E(x)}\|\leq C\e^{-\lambda t},\quad \text{for any}\ x\in \Lambda, \ t\geq 0.\]
One says that $\Phi_t|_E$ is {\em expanding} if $\Phi_{-t}|_E$ is contracting. The set $\La$ is called {\em hyperbolic} if its tangent bundle admits a continuous splitting $T_{\La}M=E^s\oplus\langle X\rangle\oplus E^u$, where $\langle X\rangle$ is the subbundle generated by the vector field $X$, and $E^s$ (resp. $E^u$) is contracted (resp. expanded). When $\La$ is hyperbolic, for every $x\in\La$, the sets
\[W^{ss}(x)=\{y\in M: \mathrm{dist}(\phi_t(x), \phi_t(y))\to 0\ \text{as}\ t\to +\infty\}\]
and
\[W^{uu}(x)=\{y\in M: \mathrm{dist}(\phi_t(x), \phi_t(y))\to 0\ \text{as}\ t\to -\infty\}\]
are invariant $C^1$ submanifolds tangent to $E^s(x)$ and $E^u(y)$ respectively at $x$, see \cite{HPS}. We denote $W^s(\orb(x))=\bigcup_{z\in\orb(x)}W^{ss}(z)$ and $W^u(\orb(x))=\bigcup_{z\in\orb(x)}W^{uu}(z)$, which are the stable and unstable manifolds of $\orb(x)$ respectively. For a chain transitive hyperbolic set, all stable manifolds $W^{ss}(x)$ have the same dimension $\dim E^s$, which will be called the {\em stable index} of the hyperbolic set. Sometimes we simply say {\em index} with the same meaning of the stable index.

Flows with singularities, or singular flows, can exhibit complex dynamics with regular orbits accumulating on singularities, such as the famous Lorenz attractor \cite{Lo,GW}. In such cases, the set $\La$ can not be hyperbolic. Nevertheless, a weaker version of hyperbolicity can be defined.
One says that $\Lambda$ is {\em partially hyperbolic} if there exists a $\Phi_t$-invariant splitting of the tangent bundle $T_{\Lambda}M=E^s\oplus E^{cu}$ such that $E^s$ is contracted and it is dominated by $E^{cu}$.
If, moreover, the bundle $E^{cu}$ is sectionally expanded, then $\Lambda$ is said to be {\em singular hyperbolic} \cite{MPP}.
Note that the partial hyperbolicity or singular hyperbolicity ensures also the existence of stable manifolds.

We will study mainly chain recurrence classes.
A chain recurrence class $C(x,X)$ is {\em nontrivial} if it is not reduced to a critical element.
In \cite{PYY21}, it is shown that a nontrivial, Lyapunov stable, and singular hyperbolic chain recurrence class contains a periodic orbit, and $C^1$ generically, such a chain recurrence class is indeed an attractor. This result has been generalized to $C^1$ open and dense vector fields in \cite{CY}.

\begin{Lemma}[\cite{CY}]
	There is an open and dense set $\cU \in \xX^1(M)$ such that for any $X\in \cU$, any nontrivial, Lyapunov stable, and singular hyperbolic chain recurrence class $C(\sigma)$ of $X$ is a homoclinic class. Moreover, for any hyperbolic periodic orbit $\gamma\subset C(\sigma)$,  there exists a neighborhood $U$ of $C(\sigma)$ and a neighborhood $\cU_X$ of $X$ such that for any $Y\in\cU_X$, the stable manifold $W^s(\gamma_Y,Y)$ is dense in $U$.
\end{Lemma}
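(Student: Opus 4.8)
The plan is to establish the statement first for a $C^1$-residual set and then to promote it to an open and dense set by a robustness argument. For the residual part, I would work on a residual set $\cR\subset\xX^1(M)$ on which Bonatti--Crovisier's connecting lemma for pseudo-orbits holds (so that every chain recurrence class containing a periodic orbit is the homoclinic class of that orbit) and on which the conclusion of \cite{PYY21} holds. For $X\in\cR$ and $C(\sigma)$ a nontrivial, Lyapunov stable, singular hyperbolic chain recurrence class, \cite{PYY21} supplies a hyperbolic periodic orbit $\gamma\subset C(\sigma)$ and shows that $C(\sigma)$ is a topological attractor; the connecting lemma then gives $C(\sigma)=H(\gamma,X)$, and likewise $C(\sigma)$ is the homoclinic class of any hyperbolic periodic orbit it contains.

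For density of the stable manifold I would exploit the partially hyperbolic structure. Write $\Lambda=C(\sigma)=H(\gamma,X)$, with singular hyperbolic splitting $T_\Lambda M=E^s\oplus E^{cu}$ and $E^s$ uniformly contracted. Since $\Lambda$ is an attractor, the strong stable manifolds $W^{ss}(x)$, $x\in\Lambda$, integrate to a continuous foliation $\cW^{ss}$ on a trapping neighborhood $U$ of $\Lambda$ whose local leaves cover a neighborhood of $\Lambda$. Now $W^s(\gamma,X)=\bigcup_{z\in\orb(\gamma)}W^{ss}(z)$ is $\cW^{ss}$-saturated and contains every transverse homoclinic point of $\gamma$; as these are dense in $H(\gamma,X)=\Lambda$ by definition of the homoclinic class, the $\cW^{ss}$-saturation of this dense set, hence $W^s(\gamma,X)$ itself, is dense in $U$.

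To pass from a residual to an open and dense set, I would use that singular hyperbolicity of the maximal invariant set in a trapping region is $C^1$-robust and that trapping regions persist under perturbation. For $X\in\cR$, each nontrivial Lyapunov stable singular hyperbolic class is then a singular hyperbolic attractor whose continuation $\Lambda_Y$ in a fixed trapping region $U$ remains such an attractor for all $Y$ in a $C^1$ neighborhood $\cU_X$ of $X$, still containing the continuation $\gamma_Y$; shrinking $\cU_X$ and using lower semicontinuity of homoclinic classes at residual points, $H(\gamma_Y,Y)$ stays Hausdorff-close to $\Lambda$ from below, so the argument above reapplied to $Y$ shows $W^s(\gamma_Y,Y)$ dense in $U$, and then $H(\gamma_Y,Y)$ is dense in $\Lambda_Y$, hence equals it. One must also check that no new nontrivial Lyapunov stable singular hyperbolic class arises for $Y\in\cU_X$ that is not a continuation of one already treated — such a class is a quasi-attractor, trapped in a small region that is automatically a trapping region for the nearby $X\in\cR$. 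Taking $\cU$ to be the interior of the set of vector fields satisfying the conclusion then gives an open set containing $\cR$, so it is open and dense.

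The hard part will be this last step: upgrading a $C^1$-generic statement to an open-and-dense one requires showing that a singular hyperbolic quasi-attractor is \emph{robustly} a single homoclinic class filling it up. Since homoclinic classes are only lower semicontinuous and, away from a residual set, need not exhaust the chain recurrence class, the identity $H(\gamma_Y,Y)=\Lambda_Y$ cannot be read off genericity; it must be extracted from robust geometry — the sectionally expanding bundle $E^{cu}$, which spreads out the unstable sets of periodic orbits, together with the continuity of $\cW^{ss}$ and the quasi-attractor structure.
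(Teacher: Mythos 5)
The paper does not prove this lemma --- it is quoted verbatim from Crovisier--Yang \cite{CY} (``Robust transitivity of singular hyperbolic attractors''), so there is no in-paper proof against which to compare your argument. What follows is therefore a review of your proposal on its own merits.

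Your sketch of the $C^1$-generic part is sound: Pacifico--Yang--Yang give a periodic orbit inside every nontrivial Lyapunov stable singular hyperbolic class, the Bonatti--Crovisier connecting lemma for pseudo-orbits identifies that class with the homoclinic class, and saturating the dense set of transverse homoclinic points by the strong stable foliation $\cW^{ss}$ (which sweeps out a trapping neighborhood because $T_\Lambda M=E^s\oplus E^{cu}$ and $\Lambda$ is an attractor) yields density of $W^s(\gamma,X)$ in $U$. This is the standard route and would go through.

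The genuine gap is the one you yourself flag: the passage from residual to open-and-dense. Taking the interior of the set of vector fields satisfying the conclusion produces an open set, but there is no reason for it to be dense --- a residual set can have empty interior. What \cite{CY} actually does to close this gap is a \emph{robust} geometric argument, not a perturbative one: using the sectionally expanding $E^{cu}$-bundle and a cone field on a cross-section (much like Section~6 of the present paper), they show that any $cu$-curve in the trapping region grows to uniform length under the return map and therefore hits the stable lamination of $\gamma_Y$, for all $Y$ in a whole $C^1$-neighborhood. This forces $W^u(\gamma_Y,Y)$ to be dense in the maximal invariant set and simultaneously forces the maximal invariant set to coincide with $H(\gamma_Y,Y)$, with no genericity needed. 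Your phrase ``it must be extracted from robust geometry'' correctly identifies the missing ingredient, but the proposal stops short of supplying the cone-field/length-growth estimate that actually delivers robustness; as written the proof does not establish the open-and-dense version of the statement, and the concern you raise about new Lyapunov stable classes appearing after perturbation is likewise left unresolved rather than handled.
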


\subsection{Linear Poincar\'e flow and its extension}

Given a $\phi_t$-invariant set $\Lambda\subset M$. Define the normal bundle over $\Lambda$ as
\[\cN_{\Lambda}=\bigcup_{x\in\Lambda\setminus\Sing(X)}\cN_x,\]
where $\cN_x$ is the orthogonal complement of the one-dimensional subspace $l^X_x\subset T_xM$ generated by $X(x)$. In particular, if $\Lambda=M$, then $\cN_M=\bigcup\limits_{M\setminus \Sing(X)}\cN_x$.
For any $x\in M\setminus\Sing(X)$, $v\in \cN_x$ and $t\in\RR$, let $\psi_t(v)$ to be the orthogonal projection of $\Phi_t(v)$ to $\cN_{\phi_t(x)}$. In this way one defines a flow $\psi_t=\psi^X_t$ on the normal bundle $\cN_M$, which is called the {\em linear Poincar\'e flow}.
Note that the normal bundle is not defined at singularities. Hence the base of the normal bundle is non-compact for singular flows. We present below a compactification given by \cite{LGW}.

Define the fiber bundle $G^1=\bigcup\limits_{x\in M}Gr(1,T_xM)$, where $Gr(1,T_xM)$ is the Grassmannian of lines in the tangent space $T_xM$ through the origin. Let $\beta:G^1\to M$ be the corresponding bundle projection that sends a line $l\in Gr(1,T_xM)$ to $x$. The tangent flow $\Phi_t$ induces a flow $\hat{\Phi}_t$ on $G^1$ defined by $\hat{\Phi}_t(\langle v\rangle)=\langle\Phi_t(v)\rangle$, where $v\in TM$ is a nonzero vector and $\langle v\rangle$ is the linear subspace spanned by $v$.

Let $\xi: TM\to M$ be the bundle projection of the tangent bundle. One then defines a vector bundle with base $G^1$ as
\[\beta^*(TM)=\{(l,v)\in G^1\times TM: \beta(l)=\xi(v)\}.\]
The corresponding bundle projection $\iota$ sends every vector $(l,v)\in\beta^*(TM)$ to $l$.
The tangent flow induces also a flow $\tilde{\Phi}_t$ on $\beta^*(TM)$:
\[\tilde{\Phi}_t(l,v)=(\hat{\Phi}_t(l),\Phi_t(v)),\quad \forall (l,v)\in\beta^*(TM).\]
The flow $\tilde{\Phi}_t$ will be called the {\em extended tangent flow}.

For any $l\in G^1$ such that $\beta(l)=x$, there is a natural identification between $T_xM$ and $\{l\}\times T_xM$.
Thus we define the {\em extended normal bundle} as
\[\wt{\cN}=\{(l,v)\in \beta^*(TM): v\perp l\}.\]
We also define the extended normal bundle over any nonempty subset $\Delta\subset G^1$:
\[\wt{\cN}_{\Delta}=\{(l,v)\in\iota^{-1}(\Delta): v\perp l\}.\]
Finally, we define the {\em extended linear Poincar\'e flow} $\tilde{\psi}_t$ on $\wt{\cN}$ as follows:
\[\tilde{\psi}_t(l,v)=\tilde{\Phi}_t(l,v)-\langle \Phi_t(u),\Phi_t(v)\rangle\cdot\tilde{\Phi}_t(l,u),\]
where $u\in l$ is a unit vector, and $\langle \cdot,\cdot\rangle$ stands for the inner product on the tangent bundle.

\begin{Remark}\label{rem.normal-bundle}
   For any $x\notin\Sing(X)$, let $l=l^X_x$, then $\wt{\cN}_l$ can be naturally identified with $\cN_x$ and $\tilde{\psi}_t|_{\wt{\cN}_l}$ can be naturally identified with $\psi_t|_{\cN_x}$.
\end{Remark}

We will also consider invariant measures on $G^1$. Let $\mu$ be any Borel measure on $G^1$, the bundle projection $\beta:G^1\to M$ induces a measure $\beta_*\mu$ on $M$ such that $(\beta_*\mu)(A)=\mu(\beta^{-1}(A))$ for any Borel set $A\subset M$. When $\mu$ is $\hat{\Phi}_t$-invariant, the induced measure $\beta_*\mu$ is $\phi_t$-invariant.

\subsection{Fundamental limit and dominated splitting}

Given a chain recurrent set $\Gamma$ of the vector field $X$. Let $X_n$ be a sequence of $C^1$ vector fields such that $X_n\to X$ in the $C^1$ topology. Suppose there exists a periodic orbit $\gamma_n$ of $X_n$ such that $\gamma_n$ converges to a compact subset of $\Gamma$ in the Hausdorff topology, then the sequence of pairs $(\gamma_n, X_n)$ is called a {\em fundamental sequence} of $\Gamma$, and will be denoted as $(\gamma_n,X_n)\hookrightarrow (\Gamma,X)$. The sequence $(\gamma_n,X_n)$ is called an {\em $i$-fundamental sequence} if $\ind(\gamma_n)=i$ for all $n$ large enough. When there is no possible ambiguity, we will simply say that $\gamma_n$ is a fundamental sequence of $\Gamma$. We denote by
$\cF(\Gamma)$ the limit of directions of all fundamental sequences of $\Gamma$, {\it i.e.}
\[\cF(\Gamma)=\{l\in G^1: \exists (\gamma_n,X_n)\hookrightarrow (\Gamma,X),\ p_n\in \gamma_n, \ \text{such that}\ l^{X_n}_{p_n} \to l\}.\]
As an immediate consequence of the definition, the map $\cF(\cdot)$ is upper semi-continuous in the following sense.
\begin{Lemma}\label{lem.semi-cont}
  Suppose $\Gamma$ is a chain transitive set of $X$. Then for any neighborhood $\cB$ of $\cF(\Gamma)$ in $G^1$, there exist a neighborhood $U$ of $\Gamma$ and $C^1$ neighborhood $\cU$ of $X$ such that for any $Y\in\cU$ and any chain transitive set $\Gamma_Y$ of $Y$ contained in $U$, it holds $\cF(\Gamma_Y)\subset \cB$.
\end{Lemma}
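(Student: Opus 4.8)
The plan is to argue by contradiction, relying on a diagonal extraction and on the compactness of $G^1$. Suppose the conclusion fails for some open neighborhood $\cB$ of $\cF(\Gamma)$. Fix a decreasing neighborhood basis $U_1\supset U_2\supset\cdots$ of $\Gamma$ in $M$ and a decreasing $C^1$-neighborhood basis $\cU_1\supset\cU_2\supset\cdots$ of $X$ in $\xX^1(M)$, so that $\bigcap_k\cU_k=\{X\}$. Then for each $k$ there are a vector field $Y_k\in\cU_k$ and a chain transitive set $\Gamma_{Y_k}\subset U_k$ of $Y_k$ with $\cF(\Gamma_{Y_k})\not\subset\cB$; pick $l_k\in\cF(\Gamma_{Y_k})\setminus\cB$. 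By the definition of $\cF$, each $l_k$ is attained along a fundamental sequence $(\gamma_{k,m},Z_{k,m})\hookrightarrow(\Gamma_{Y_k},Y_k)$: there are $p_{k,m}\in\gamma_{k,m}$ so that, as $m\to\infty$, $Z_{k,m}\to Y_k$ in the $C^1$ topology, $\gamma_{k,m}$ converges in the Hausdorff topology to a compact set $K_k\subset\Gamma_{Y_k}$, and $l^{Z_{k,m}}_{p_{k,m}}\to l_k$ in $G^1$.

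The next step is a diagonal choice: for each $k$, pick $m=m(k)$ so large that $Z_{k,m(k)}$ lies within $C^1$-distance $1/k$ of $Y_k$, the Hausdorff distance between $\gamma_{k,m(k)}$ and $K_k$ is less than $1/k$, and $l^{Z_{k,m(k)}}_{p_{k,m(k)}}$ lies within $1/k$ of $l_k$ in $G^1$; abbreviate $W_k=Z_{k,m(k)}$, $\eta_k=\gamma_{k,m(k)}$, $q_k=p_{k,m(k)}$, $\ell_k=l^{W_k}_{q_k}$. Then $W_k\to X$ in $C^1$, since the $C^1$-distance from $W_k$ to $X$ is at most $1/k$ plus that from $Y_k$ to $X$, and the latter tends to $0$ because $Y_k\in\cU_k$. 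Also $\eta_k$ is contained in the $(1/k)$-neighborhood of $K_k\subset\Gamma_{Y_k}\subset U_k$, hence in any prescribed neighborhood of $\Gamma$ once $k$ is large. Using compactness of the hyperspace of nonempty compact subsets of $M$ under the Hausdorff metric, I would pass to a subsequence along which $\eta_k$ converges in the Hausdorff topology to a nonempty compact set $K$; by the previous sentence $K$ lies in the closed $\vep$-neighborhood of $\Gamma$ for every $\vep>0$, so $K\subset\Gamma$. Hence $(\eta_k,W_k)\hookrightarrow(\Gamma,X)$ is a genuine fundamental sequence of $\Gamma$. Finally, since $G^1$ is compact and $G^1\setminus\cB$ is closed, a further subsequence gives $\ell_k\to\ell_\infty\in G^1\setminus\cB$, and $\ell_k$ being within $1/k$ of $l_k$ forces $l_k\to\ell_\infty$ too. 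Then $\ell_\infty$ is a limit of directions $l^{W_k}_{q_k}$ with $q_k\in\eta_k$ along a fundamental sequence of $\Gamma$, so $\ell_\infty\in\cF(\Gamma)\subset\cB$, contradicting $\ell_\infty\notin\cB$.

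I expect no deep obstacle here; the statement is a soft upper-semicontinuity fact. The only point that needs care is the diagonal extraction in the previous paragraph, namely checking that a fundamental sequence selected from within the fundamental sequences of the perturbed sets $\Gamma_{Y_k}$ is again a fundamental sequence of the original $\Gamma$ — this is exactly where one uses $\bigcap_k\cU_k=\{X\}$ together with the compactness of $G^1$ and of the hyperspace of compact subsets of $M$. Chain transitivity of $\Gamma$ and of the $\Gamma_{Y_k}$ is used only in so far as it makes $\cF$ defined on these sets, and the same extraction also shows in passing that $\cF(\Gamma)$ is a closed subset of $G^1$.
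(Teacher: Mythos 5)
Your argument is correct. The paper states the lemma as ``an immediate consequence of the definition'' and provides no proof, so there is no competing argument to compare against; your contradiction-plus-diagonal extraction (using compactness of $G^1$ and of the hyperspace of compact subsets of $M$ to pass from fundamental sequences of the perturbed classes $\Gamma_{Y_k}$ to a genuine fundamental sequence of $\Gamma$) is exactly the routine unwinding the authors are implicitly invoking, and every step checks out.
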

We also denote by $B(\Gamma)$ the limit set of directions of regular points contained in $\Gamma$, {\it i.e.}
\[B(\Gamma)=\{l\in G^1: \exists x_n\in \Gamma\setminus\Sing(X), \ \text{such that}\ l^X_{x_n} \to l\}.\]
Note that for any $l\in\cF(\Gamma)$, if $x=\beta(l)$ is a regular point, then $l=l^X_x$. This also holds for $B(\Gamma)$. Thus $\cF(\Gamma)\setminus B(\Gamma)$ is contained in the tangent spaces of singularities.

A hyperbolic singularity $\rho$ is called {\em Lorenz-like} if there is a partially hyperbolic splitting of the tangent bundle $T_{\rho}M=E^{ss}_{\rho}\oplus E^{cu}_{\rho}$ such that $E^{cu}_{\rho}$ is sectionally expanded and it decomposes further into a dominated spitting $E^c_{\rho}\oplus E^u_{\rho}$ with $\dim E^c_{\rho}=1$ and $\Phi_t|_{E^c_{\rho}}$ is contracting. For a Lorenz-like singularity $\rho$, denote by $W^{ss}(\rho)$ its strong stable manifold tangent to $E^{ss}_{\rho}$ at the singularity.
\begin{Lemma}\label{lem.loren-like-singularity}
  Let $C(\rho,X)$ be a chain recurrence class of $X$ containing a Lorenz-like singularity $\rho$ such that $C(\rho,X)\cap(W^{ss}(\rho)\setminus\{\rho\})=\emptyset$. 
  Then there exists a $C^1$ neighborhood $\cU$ of $X$ such that for any $Y\in\cU$, it holds $C(\rho_Y,Y)\cap (W^{ss}(\rho_Y,Y)\setminus\{\rho_Y\})=\emptyset$ and $\cF(C(\rho_Y,Y))\cap T_{\rho_Y}M\subset E^{cu}_{\rho_Y}$.\footnote{Here, to simplify notations, we have identified $T_{\rho_Y}M$ with $Gr(1,T_{\rho_Y}M)$, and $E^{cu}_{\rho_Y}$ with $Gr(1,E^{cu}_{\rho_Y})$. We hope that such abuses of notations shall not cause much confusion.}
\end{Lemma}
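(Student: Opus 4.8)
The plan is to prove this in two parts: robustness of the condition $C(\rho_Y,Y)\cap(W^{ss}(\rho_Y,Y)\setminus\{\rho_Y\})=\emptyset$, and then the inclusion $\cF(C(\rho_Y,Y))\cap T_{\rho_Y}M\subset E^{cu}_{\rho_Y}$. First I would observe that a Lorenz-like singularity is a hyperbolic critical element, so by standard hyperbolic continuation there is a $C^1$-continuation $\rho_Y$ for $Y$ in a neighborhood of $X$, depending continuously on $Y$, and the splitting $E^{ss}_{\rho_Y}\oplus E^c_{\rho_Y}\oplus E^u_{\rho_Y}$ (hence $E^{cu}_{\rho_Y}$) together with the local strong stable manifold $W^{ss}_{loc}(\rho_Y,Y)$ also varies continuously; in particular the "Lorenz-like" property is $C^1$-open. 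Next I would use the fact that a chain recurrence class varies upper semi-continuously: for any neighborhood $U$ of $C(\rho,X)$ there is a $C^1$-neighborhood $\cU$ of $X$ and a neighborhood of $\rho$ such that $C(\rho_Y,Y)\subset U$ for $Y\in\cU$. Combining this with the fact that the whole strong stable manifold is the saturation under the backward flow of the local piece, a compactness plus continuity argument (of the type used for robustness of "no-cycle" conditions) gives that $C(\rho_Y,Y)$ stays disjoint from $W^{ss}(\rho_Y,Y)\setminus\{\rho_Y\}$ for $Y$ close to $X$; this is where one must be a little careful, since $W^{ss}(\rho)$ is non-compact, but one only needs to control it on a fixed compact neighborhood of $C(\rho,X)$ because $C(\rho_Y,Y)$ is confined there, and on that compact region $W^{ss}(\rho_Y,Y)$ converges to $W^{ss}(\rho,X)$ uniformly.

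For the second conclusion, the key point is to control the directions $l\in\cF(C(\rho_Y,Y))\cap T_{\rho_Y}M$. By definition such an $l$ is the limit of directions $l^{Y_n}_{p_n}$ along a fundamental sequence $(\gamma_n,Y_n)\hookrightarrow(C(\rho_Y,Y),Y)$ with $p_n\in\gamma_n$ and $\beta(l^{Y_n}_{p_n})\to\rho_Y$ (since $\beta(l)=\rho_Y$). The plan is to argue that such a limit direction cannot lie in $E^{ss}_{\rho_Y}$, nor more generally outside $E^{cu}_{\rho_Y}$. The heuristic is the classical one (going back to Morales--Pacifico--Pujals and used in singular-hyperbolic theory): an orbit segment of a nearby periodic orbit that passes very close to the singularity $\rho_Y$, and that tracks a direction close to $E^{ss}_{\rho_Y}$, must previously have come in through a neighborhood of $W^{ss}(\rho_Y,Y)$; but since $C(\rho_Y,Y)$ is (robustly) disjoint from $W^{ss}(\rho_Y,Y)\setminus\{\rho_Y\}$ — and the periodic orbits $\gamma_n$ accumulate on $C(\rho_Y,Y)$ — they cannot enter an arbitrarily small neighborhood of $W^{ss}(\rho_Y,Y)$, contradiction. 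Making this rigorous I would pass to a limit object: the directions $l^{Y_n}_{p_n}$ and their forward/backward translates under $\hat\Phi_t$ accumulate, in $G^1$, on a $\hat\Phi_t$-invariant compact set; using the domination $E^c_\rho\oplus E^u_\rho$ inside $E^{cu}_\rho$ and the partially hyperbolic splitting $E^{ss}_\rho\oplus E^{cu}_\rho$ at $\rho_Y$, any direction in $T_{\rho_Y}M\setminus E^{cu}_{\rho_Y}$ is in the basin (under $\hat\Phi_{-t}$) of the repelling fixed direction(s), so a whole piece of limit orbit would have to lie in $W^{ss}_{loc}(\rho_Y,Y)$; then translating back, the fundamental sequence would have to meet a small neighborhood of $W^{ss}(\rho_Y,Y)$, which contradicts the robust disjointness established in the first part. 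One technical device I would invoke here is that $\cF(\cdot)$ is closed under the extended tangent flow $\hat\Phi_t$ and upper semi-continuous (Lemma \ref{lem.semi-cont}), which lets me shrink $\cU$ and the neighborhood $U$ so that the fundamental sequences of $C(\rho_Y,Y)$ are all genuinely close to those of $C(\rho,X)$.

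The step I expect to be the main obstacle is the dynamical argument at the singularity: precisely showing that a limit direction $l\in T_{\rho_Y}M$ of a fundamental sequence that is \emph{not} in $E^{cu}_{\rho_Y}$ forces the corresponding orbit segments of $\gamma_n$ to intersect an arbitrarily small neighborhood of $W^{ss}(\rho_Y,Y)\setminus\{\rho_Y\}$. This requires a careful local analysis near the hyperbolic singularity — essentially a "passage through a neighborhood of a saddle" estimate — tracking simultaneously the position of the orbit and the evolution of the tangent direction under $\hat\Phi_t$, and using that $\dim E^c_\rho=1$ with $\Phi_t|_{E^c_\rho}$ contracting so that $E^{cu}_\rho$ genuinely dominates $E^{ss}_\rho$. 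Once one has good cone estimates near $\rho_Y$ that are uniform for $Y$ in a $C^1$-neighborhood (which follows from continuity of the linearization data), the contradiction with the robust disjointness from $W^{ss}$ closes the proof; the bookkeeping of "how small a neighborhood" and "how close to $X$" is the routine but delicate part. I would also need to handle the degenerate possibility that $p_n$ itself stays bounded away from $\rho_Y$, but then $\beta(l)\neq \rho_Y$, so that case does not arise for $l\in T_{\rho_Y}M$ with $\beta(l)=\rho_Y$, and for the general statement one simply notes $\cF(C(\rho_Y,Y))\cap T_{\rho_Y}M$ only concerns such $l$.
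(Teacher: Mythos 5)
Your proposal is correct and follows essentially the same route as the paper. For the first conclusion, the paper simply invokes upper semi-continuity of the chain recurrence class and continuity of the local strong stable manifold (exactly the argument you give, including the observation that only a compact piece of $W^{ss}$ matters because $C(\rho_Y,Y)$ is confined to a fixed neighborhood of $C(\rho,X)$). For the second conclusion, the paper does not write out the dynamical argument at all: it cites \cite[Lemma 4.4]{LGW}. The content of that cited lemma is precisely the "passage near a saddle" cone argument you sketch — using the domination $E^{ss}_\rho\oplus E^{cu}_\rho$ to show that if a limit direction of a fundamental sequence lies outside $E^{cu}_{\rho_Y}$, then (pushing backward under $\hat\Phi_t$, under which $\cF$ is invariant) one produces a limit direction in $E^{ss}_{\rho_Y}$, which forces the orbit segments of $\gamma_n$ to exit a fixed small ball around $\rho_Y$ at points whose $E^{cu}$-coordinates are a vanishing fraction of their $E^{ss}$-coordinates; passing to the limit produces a point of $C(\rho_Y,Y)$ on $W^{ss}_{loc}(\rho_Y,Y)\setminus\{\rho_Y\}$, contradicting the first part. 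One small tightening of your sketch: "meeting an arbitrarily small neighborhood of $W^{ss}(\rho_Y,Y)$" is not by itself a contradiction; what you need (and what the cone estimate actually delivers) is that the exit points $q_n$ at a \emph{fixed} distance $\delta$ from $\rho_{Y_n}$ converge to an actual point of $W^{ss}_{loc}(\rho_Y,Y)\setminus\{\rho_Y\}$, which then lies in the Hausdorff limit of $\gamma_n$ and hence in $C(\rho_Y,Y)$. With that precision your argument is a faithful reconstruction of the cited lemma.
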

\begin{proof}
  Note that the properties defining a Lorenz-like singularity is $C^1$ robust. Since the singularity $\rho$ is Lorenz-like, there is a $C^1$ neighborhood $\cU$ of $X$ such that for any $Y\in\cU$, the continuation $\rho_Y$ is also Lorenz-like. Moreover, by the upper semi-continuity of chain recurrence class and the continuity of local strong stable manifolds, one deduces that $C(\rho_Y,Y)\cap (W^{ss}(\rho_Y,Y)\setminus\{\rho_Y\})=\emptyset$ for any $Y\in \cU$ (shrinking $\cU$ if necessary). Then, a similar argument as in the proof of [LGW, Lemma 4.4] gives $\cF(C(\rho_Y,Y))\cap T_{\rho_Y}M\subset E^{cu}_{\rho_Y}$.
\end{proof}

A fundamental sequence $(\gamma_n,X_n)$ of $\Gamma$ is said to {\em admit an index $i$ dominated splitting} if there exist a $\psi^{X_n}_t$-invariant splitting $\cN_{\gamma_n}=\cN_n^{cs}\oplus \cN_n^{cu}$ ($n\in\mathbb{N}$) of the normal bundle and constants $T>0$, $n_0>0$ such that for any $t>T$ and $n>n_0$, it holds $\dim \cN_n^{cs}=i$  and
\[\|\psi^{X_n}_t|_{\cN_n^{cs}(x)}\|\cdot\|\psi^{X_n}_{-t}|_{\cN_n^{cu}(\phi^{X_n}_t(x))}\|<1/2.\]
Identifying the extended normal bundle $\wt{\cN}_{B(\gamma_n)}$ with $\cN_{\gamma_n}$ (see Remark \ref{rem.normal-bundle}), we see that $\wt{\cN}_{B(\gamma_n)}$ also admits a dominated splitting with respect to $\tilde{\psi}_t^{X_n}$ and of the same index. Thus, the following lemma is a consequence of continuity of domination.

\begin{Lemma}\label{lem.dom-equivalence}
$\cF(\Gamma)$ admits a dominated splitting of index $i$ with respect to $\tilde{\psi}^X_t$ if and only if every fundamental sequence of $\Gamma$ admits an index $i$ dominated splitting (with the same domination constant $T$).
\end{Lemma}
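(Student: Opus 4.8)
The plan is to prove both implications by transferring domination between the extended linear Poincaré flow $\tilde\psi_t$ on $\wt\cN$ and the honest linear Poincaré flows $\psi^{X_n}_t$ on the normal bundles $\cN_{\gamma_n}$, using compactness of $G^1$ and the continuity of the bundle $\wt\cN$, of the flow $\tilde\psi_t$, and of its cocycle norms with respect to $(l,X)$. The key observation (already recorded in Remark \ref{rem.normal-bundle} and in the paragraph preceding the statement) is that for a regular point $x$ with $l=l^X_x$, the fiber $\wt\cN_l$ is canonically identified with $\cN_x$ and $\tilde\psi_t|_{\wt\cN_l}$ with $\psi_t|_{\cN_x}$; so a $\psi^{X_n}_t$-invariant splitting of $\cN_{\gamma_n}$ is literally a $\tilde\psi^{X_n}_t$-invariant splitting of $\wt\cN_{B(\gamma_n)}$ with the same index and the same domination constants, and conversely.

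First I would prove the ``only if'' direction. Suppose $\cF(\Gamma)$ admits a $\tilde\psi^X_t$-invariant dominated splitting $\wt\cN_{\cF(\Gamma)}=\wt\cN^{cs}\oplus\wt\cN^{cu}$ of index $i$ with constants $C\ge 1$, $\lambda>0$; fix $T>0$ with $C\e^{-\lambda T}<1/4$, so that the $T$-time domination inequality $\|\tilde\psi^X_T|_{\wt\cN^{cs}(l)}\|\cdot\|\tilde\psi^X_{-T}|_{\wt\cN^{cu}(\hat\Phi_T l)}\|<1/4$ holds on $\cF(\Gamma)$. By continuity of the cone/splitting estimate and compactness of $\cF(\Gamma)$, there is an open neighborhood $\cB$ of $\cF(\Gamma)$ in $G^1$ on which $\tilde\psi^X_t$ admits an $i$-dimensional dominated splitting with constant $T$ (standard extension of a dominated splitting from a compact invariant set to a neighborhood, via the cone-field criterion: on $\cB$ one still has, for $t\ge T$, the strict contraction of the relevant cone field). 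Now let $(\gamma_n,X_n)\hookrightarrow(\Gamma,X)$ be any fundamental sequence; by definition of $\cF(\Gamma)$ and upper semicontinuity (Lemma \ref{lem.semi-cont}), for all $n$ large the lifted directions $l^{X_n}_{p_n}$, $p_n\in\gamma_n$, all lie in $\cB$, i.e. $B(\gamma_n)\subset\cB$. Since the domination inequality for $\tilde\psi^{X_n}_t$ depends continuously on the vector field as well, shrinking $\cB$ and enlarging $n_0$ if needed, $\tilde\psi^{X_n}_t$ admits an $i$-dominated splitting with constant $T$ over $B(\gamma_n)$ for $n>n_0$; identifying back with $\cN_{\gamma_n}$ gives the required $\psi^{X_n}_t$-invariant splitting, so $(\gamma_n,X_n)$ admits an index $i$ dominated splitting.

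Conversely, suppose every fundamental sequence of $\Gamma$ admits an index $i$ dominated splitting with a common constant $T$. Let $l_0\in\cF(\Gamma)$ be arbitrary; by definition there is a fundamental sequence $(\gamma_n,X_n)\hookrightarrow(\Gamma,X)$ and points $p_n\in\gamma_n$ with $l^{X_n}_{p_n}\to l_0$. Along the orbit of $p_n$ we have the splitting $\cN_n^{cs}(p_n)\oplus\cN_n^{cu}(p_n)$, equivalently a $\tilde\psi^{X_n}_t$-invariant splitting at $l^{X_n}_{p_n}\in G^1$ satisfying the $T$-time domination inequality; passing to a subsequence, the $i$-plane $\cN_n^{cs}(p_n)$ and the $(\dim\cN-i)$-plane $\cN_n^{cu}(p_n)$ converge in the appropriate Grassmannian bundles over $\wt\cN$ to subspaces $E^{cs}(l_0)\oplus E^{cu}(l_0)\subset\wt\cN_{l_0}$, and by continuity of $\tilde\Phi_t$, $\tilde\psi_t$ and of the norm cocycle in $(l,X)$, this limit splitting is $\tilde\psi^X_T$-invariant at $l_0$ and still satisfies $\|\tilde\psi^X_T|_{E^{cs}(l_0)}\|\cdot\|\tilde\psi^X_{-T}|_{E^{cu}(\hat\Phi_T l_0)}\|\le 1/2$. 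The only subtlety is to see that this pointwise limit assignment $l_0\mapsto E^{cs}(l_0)\oplus E^{cu}(l_0)$ is well-defined (independent of the chosen fundamental sequence and subsequence) and continuous, hence a genuine dominated splitting over $\cF(\Gamma)$: this follows from the uniqueness of a dominated splitting of a given index, applied on the $\tilde\psi^X_t$-invariant compact set $\cF(\Gamma)$ — any two limit splittings satisfy a domination inequality and must therefore agree, and the uniform $T$-time estimate upgrades, on the compact invariant set $\cF(\Gamma)$, to honest exponential domination $\|\tilde\psi^X_t|_{E^{cs}}\|\cdot\|\tilde\psi^X_{-t}|_{E^{cu}}\|\le C\e^{-\lambda t}$ by the standard iteration argument. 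I expect the main obstacle to be precisely this last point — establishing that the fiberwise limits glue into a continuous (hence dominated) subbundle over all of $\cF(\Gamma)$ rather than just a measurable invariant family of planes — which is handled by invoking uniqueness of dominated splittings together with the continuity of $\tilde\psi_t$ in both the base point of $G^1$ and the vector field; the compactification of the normal bundle via $G^1$ provided by \cite{LGW} is exactly what makes all these limiting arguments legitimate despite the presence of singularities.
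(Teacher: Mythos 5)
Your proof is correct and fills in exactly the argument the paper leaves implicit (the paper just asserts the lemma is ``a consequence of continuity of domination'' after noting the identification of $\wt{\cN}_{B(\gamma_n)}$ with $\cN_{\gamma_n}$). Both directions are the standard persistence/closedness of dominated splittings via the cone-field criterion and compactness of $G^1$, and you correctly identify the one genuine subtlety in the ``if'' direction — that the fiberwise limits of the periodic-orbit splittings might a priori form a multi-valued invariant family of planes — and resolve it with the uniqueness of dominated splittings of a given index on the compact $\hat\Phi^X_t$-invariant set $\cF(\Gamma)$.
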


\section{The example: construction and robust properties}\label{sect.construction}
In this section we construct the example and claim some key properties of the example. With these properties, a proof of Theorem A is given at the end of this section.

\subsection{The Lorenz attractor}\label{sect.lorenz}
Let us begin with the geometric model of the Lorenz attractor, see e.g. \cite{GH,GW,BDV}. Let $(x_1,x_2,x_3)$ be a Cartesian coordinate system in $\RR^3$. Let $B^3$ be a ball in $\mathbb{R}^3$ centered at the origin $O=(0,0,0)$. We shall consider a $C^1$ vector field $X^0$ on $\RR^3$ such that it is transverse to the boundary of $B^3$ and that the ball $B^3$ is an attracting region, i.e. $\phi_t^{X^0}(x)\in B^3$ for any $x\in B^3$ and $t>0$. Moreover, the following properties are assumed (see \cite[Section 3]{AP}):
\begin{enumerate}[label=(P\arabic*)]
	\item (Lorenz-like singularity) The origin $O$ is a hyperbolic singularity of stable index 2 such that its local stable manifold $W^s_{loc}(O)$ coincides with the $x_1x_3$-plane, and its local unstable manifold $W^u_{loc}(O)$ coincides with the $x_2$-axis. Moreover, the two-dimensional stable subspace $E^s_O$ decomposes into a dominated splitting $E^s_O=E^{ss}_O\oplus E^{c}_O$. Assume that the local strong stable manifold (tangent to $E^{ss}_O$) coincides with the $x_1$-axis. The three Lyapunov exponents at the singularity are \[\lambda^s=\log\|D\Phi^{X^0}_1|_{E^{ss}_O}\|,\quad \lambda^c=\log\|D\Phi^{X^0}_1|_{E^{c}_O}\|,\quad \text{and}\ \lambda^u=\log\|D\Phi^{X^0}_1|_{E^{u}_O}\|.\]
Here, $E^u_O$ is the unstable subspace of $O$. One has $\lambda^s<\lambda^c<0<\lambda^u$. We assume that $\lambda^c+\lambda^u>0$, which implies sectional expanding property of the subspace $E^{cu}_O=E^{c}_O\oplus E^u_O$.

    \item (Cross-section and the first return map) The square $\Sigma_0=\{(x_1,x_2,1): -1\leq x_1,x_2\leq 1\}$ is a cross-section of the flow $\phi^{X^0}_t$, meaning that the vector field at every point of $\Sigma_0$ is transverse to $\Sigma_0$. For simplicity, we assume that the vector field is {\em orthogonal} to $\Sigma_0$ and $\|X^0(x)\|=1$ for any $x\in\Sigma_0$. The cross-section $\Sigma_0$ intersects $W^s_{loc}(O)$ at a line segment $L_0=\{(x_1,0,1): -1\leq x_1\leq 1\}$, which cuts $\Sigma_0$ into a left part $\Sigma_0^-$ and a right part $\Sigma_0^+$, $\Sigma_0\setminus L_0=\Sigma_0^-\cup \Sigma_0^+$. There is a first return map $R_0: \Sigma_0\setminus L_0\to \Sigma_0$ such that the images $R_0(\Sigma_0^-)$ and $R_0(\Sigma_0^+)$ are each a square pinched at one end and contained in the interior of $\Sigma_0$, as shown in Figure \ref{fig.1}. Let us assume that for any $x\in \Sigma_0\setminus L_0$, the time $t_x>0$ satisfying $\phi^X_{t_x}(x)=R_0(x)$ for its first return is at least 2, i.e. $t_x>2$.
        \begin{figure}[htbp]
          \centering
          \includegraphics[width=.5\textwidth]{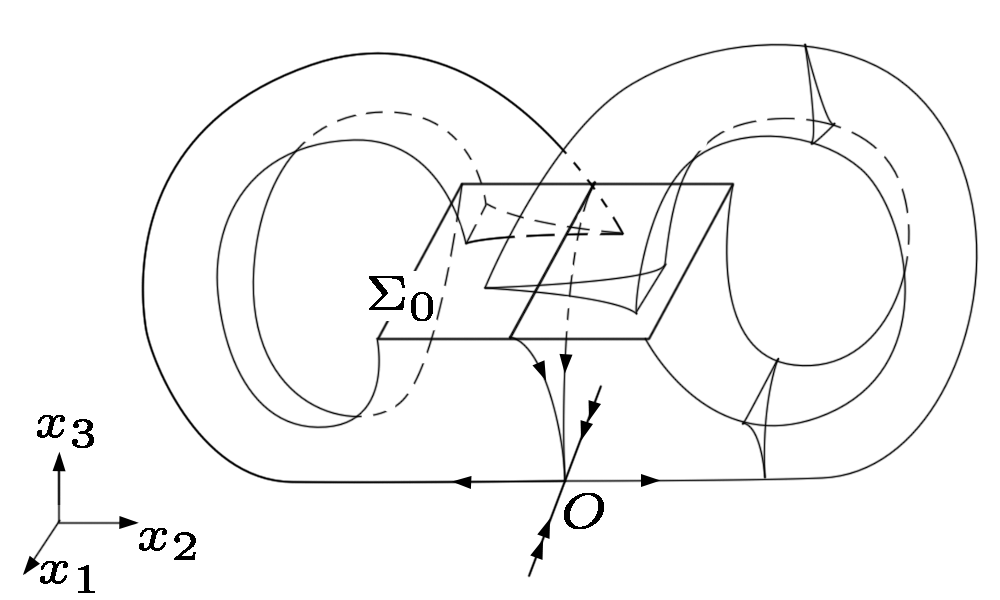}
          \caption{The Lorenz attractor }\label{fig.1}
        \end{figure}
    \item (Cone field on the cross-section) For each $\alpha>0$, there is a cone $C_{\alpha}(x)$ at the point $x=(0,0,1)$:
        \[C_{\alpha}(x)=\{(x_1,x_2)\in T_{x}\Sigma_0 : \|x_1\|\leq \alpha\|x_2\|\}.\]
        Here, we have identified $T_x\Sigma_0$ with $\Sigma_0$ and use the same coordinates $x_1,x_2$. By translating $C_{\alpha}(x)$ to every other point on $\Sigma_0$, one defines a cone field $C_{\alpha}$ on $\Sigma_0$. We assume that $R_0$ preserves the cone $C_{\alpha}$ with $\alpha=1$. More precisely, for any $x\in \Sigma_0\setminus L_0$ and any vector $u\in C_1(x)$, we assume $DR_0(u)\in C_{1/2}(R_0(x))$.
    \item (The attractor) Let $\Lambda_0=\Cl(\phi^{X^0}_{t\in\mathbb{R}}(\Lambda_{\Sigma_0}))$, where $\Lambda_{\Sigma_0}=\cap_{n\geq 0}\overline{R_0}^n(\Sigma_0)$ and $\overline{R_0}^n(\Sigma_0)$ is the closure of $n$-th return of $\Sigma_0$, ignoring $L_0$ for each return.
Then $\Lambda_0$ contains $O$ and is the unique attractor in $B^3$, with an attracting neighborhood $U_{\Lambda_0}\subset B^3$ that contains $\Sigma_0$.
	
    \item (Singular hyperbolicity) The attractor $\Lambda_0$ admits a singular hyperbolic splitting $T_{\Lambda_0}\RR^3=E^{ss}\oplus E^{cu}$, where $E^{ss}$ is uniform contracted and $E^{cu}$ is sectionally expanded. Precisely, we assume that for any two dimensional subspace $S\subset E^{cu}_x$, $x\in\Lambda_0$, it holds
\begin{equation}\label{eq.sect-exp}
 \left|\det(\Phi^{X^0}_t|_S)\right|>e^{\gamma t},\quad \forall t\geq 1,
\end{equation}
where $\gamma>0$ is a constant. The constant $\gamma$ will be assumed to be large so that the inequality \eqref{eq.sect-exp} implies expanding property for the first return map $R_0$: there exists $\rho>1$ such that for any $x\in \Lambda_0\cap(\Sigma_0\setminus L_0)$, $v\in E^{cu}_x\cap (T_x\Sigma_0\setminus\{0\})$, it holds
\[\|DR_0(v)\|>\rho\|v\|.\]
As the flow direction $X^0$ is invariant and can not be uniformly contracted, it is contained in the subbundle $E^{cu}$, i.e. ${X^0}(x)\in E^{cu}_x$ for every $x\in\Lambda_0\setminus\Sing(X)$ (see \cite[Lemma 3.4]{BGY}). 
At the singularity $O$, we have $E^{cu}_O=E^{c}_O\oplus E^u_O$. Define
\[\lambda^s_0=\log\|D\Phi^{X^0}_1|_{E^{ss}_{\Lambda_0}}\|.\]
Note that $\lambda^s\leq \lambda^s_0$. We assume $\lambda^s_0<\lambda^c$.

	\item (Stable foliation) The stable foliation $\cW^s(\Lambda_0)=\{W^s(\orb(x)): x\in \Lambda_0\}$ is $C^1$ (see \cite{AM}), which induces a $C^1$ foliation $\cF^s$ on $\Sigma_0$ such that for any $x\in \Sigma_0$, $\cF^s(x)$ is transverse to $C_1(x)$. Since $L_0$ is the intersection of $W^s_{loc}(O)$ with $\Sigma_0$, it is a leaf of $\cF^s$.
    \item (Transitivity) The attractor $\Lambda_0$ is a homoclinic class (see \cite{Ba}), so that $\Lambda_0$ is transitive and there is a periodic orbit $Q_0\subset \Lambda_0$ whose stable manifold is dense in $U_{\Lambda_0}$.
\end{enumerate}

\subsection{The skew product construction}\label{sect.skew-product}

Let $\Omega=B^3\times I$, where $I=[-1,1]$. Let $x$, $s$ be the coordinates on $B^3$ and $I$, respectively. Define on $\Omega$ the vector field $\hat{X}(x,s)=(X^0(x),-\theta s)$, where $\theta>0$ is a constant. As the fiber direction (along $I$) is uniformly contracting, there exists a unique attractor $\Lambda=\Lambda_0\times\{0\}$, which is a trivial construction of the Lorenz attractor in dimension 4. All properties of the 3-dimensional attractor $\Lambda_0$ can be easily generalized to $\Lambda$.
We now modify the vector field $\hat{X}$ along fibers to obtain a vector field $X$ so that $\Lambda$ remains an attractor but not singular hyperbolic. This is why we say the example is derived from the Lorenz attractor.
The modification will be done in a small neighborhood of a periodic orbit in $\Lambda$.

Let $P_0\subset \Lambda_0$ be a periodic orbit of $X^0$ other than $Q_0$ such that it is homoclinically related to $Q_0$. Let $P=P_0\times\{0\}$ and $Q=Q_0\times \{0\}$. Consider a small neighborhood $V_P$ of $P$ contained in $U_{\Lambda_0}\times(-1,1)$ such that $\overline{V_P}\cap (Q\cup\{\sigma\})=\emptyset$, where $\sigma=(O,0)$.
Let $\eta$ be a $C^{\infty}$ function on $M_0$ that satisfies the following conditions:
\begin{itemize}
    \item $0\leq \eta(x,s)\leq 1$, for all $(x,s)\in \Omega$;
    \item $\eta$ is supported on $V_P$, i.e. $\eta(x,s)=0$ for any $(x,s)\not\in V_P$;
	\item $\eta(x,s)=1$ if and only if $(x,s)\in P$.
\end{itemize}
The vector field $X$ on $\Omega$ is defined as the following:
\begin{equation}\label{eq.vector-field}
X(x,s)=(X^0(x), -\theta s(1-\eta(x,s))).
\end{equation}
Observe that the dynamics of the vector field is contracting along the fibers, except only for $P$ where it is neutral. Also,  $U_{\Lambda}=U_{\Lambda_0}\times (-1,1)$ is an attracting region of $X$ and the maximal invariant set $\Lambda=\Lambda_0\times\{0\}$ in $U_{\Lambda}$ is an attractor. The attractor contains a unique singularity $\sigma=(O,0)$. The tangent space at $\sigma$ admits a $\Phi^X_t$-invariant splitting $T_{\sigma}\Omega=T_{\sigma}B^3\oplus \RR^I$, in which we identify $B^3$ with $B^3\times\{0\}$ and $\RR^I$ is the subspace corresponding to fiber. Thus, the Lyapunov exponents of $\sigma$ are $\lambda^s, \lambda^c, \lambda^u$ and $-\theta$. Note that $-\theta$ is the Lyapunov exponent of the singularity along the fiber direction.
We assume further that
\begin{equation}\label{eq.eigenvalues}
	\lambda_0^s<-\theta<\lambda^c.
\end{equation}
This implies in particular that there is a dominated splitting of the stable subspace $E^s_{\sigma}=E^{ss}\oplus\RR^I\oplus E^{cs}$.
The following properties can be easily verified:
\begin{enumerate}[label=(C\arabic*)]
  \item The cube $\Sigma=\Sigma_0\times I$ is a cross section of the flow $\phi^X_t$. The local stable manifold of $\sigma$ cuts $\Sigma$ along a two-dimensional disk $L=L_0\times I$. So $\Sigma\setminus L$ consists of a left part $\Sigma^-=\Sigma_0^-\times I$ and a right part $\Sigma^+=\Sigma_0^+\times I$. Let $R:\Sigma\setminus L\to \Sigma$ be the first return map. Then $R(\Sigma^-)$ is a cube pinched at one end and contained in the interior of $\Sigma$, and similarly for $R(\Sigma^+)$. For any $p\in \Sigma\setminus L$, let $t_p>0$ be the smallest time that $\phi^X_{t_p}(p)=R(p)$. By assumption on $R_0$, we have $t_p>2$ for any $p\in \Sigma\setminus L$.
  \item $Q$ is a hyperbolic periodic orbit contained in $\Lambda$ with stable index 2. The stable manifold $W^s(Q,X)$ is dense in $U_{\Lambda}$. This follows from the fact that the stable manifold of the periodic orbit $Q_0$ is dense in $U_{\Lambda_0}$ and that the tangent flow along the fibers is topologically contracting. Moreover, $W^s(Q,X)$ intersects $\Sigma$ along a dense family of 2-dimensional $C^1$ disks, which are leaves of the foliation $\cF^s\times I$. Here, $\cF^s$ is the foliation on $\Sigma_0$ as in (P6). 
  \item $P\subset \Lambda$ is a non-hyperbolic periodic orbit of $X$: it has a zero exponent along the fiber direction. Nonetheless, $P$ has a 3-dimensional topologically stable manifold and its strong stable manifold $W^{ss}(P,X)$ (2-dimensional) has nonempty intersection with the unstable manifold of $Q$. Also, its unstable manifold has a transverse intersection with the stable manifold of $Q$.
\end{enumerate}

\subsection{Robust properties of the example}

As a first result, we show that $\Lambda$ admits a partially hyperbolic splitting.
\begin{Lemma}\label{lem.ph-splitting}
 There exists a partially hyperbolic splitting $T_{\Lambda}\Omega=E^{ss}\oplus F$ for the tangent flow with $\dim E^{ss}=1$.
\end{Lemma}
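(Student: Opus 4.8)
The plan is to exhibit the splitting explicitly from the product structure of $\Omega=B^3\times I$ together with the singular hyperbolic splitting $T_{\Lambda_0}\RR^3=E^{ss}_{\Lambda_0}\oplus E^{cu}_{\Lambda_0}$ of the Lorenz attractor from (P5). The key first step is a structural observation: since the fiber component $g(x,s)=-\theta s(1-\eta(x,s))$ of the vector field \eqref{eq.vector-field} carries an overall factor $s$, one has $\partial_x g(x,0)\equiv 0$; hence along any orbit contained in $\Lambda=\Lambda_0\times\{0\}$ the variational equation of $X$ is block diagonal with respect to the decomposition $T_{(x,0)}\Omega=T_xB^3\oplus\RR^I$, acting as $D\phi^{X^0}_t(x)$ on the first factor and as the positive scalar cocycle $a(x,t)=\exp\!\big(-\theta\int_0^t(1-\eta(\phi^{X^0}_\tau(x),0))\,d\tau\big)\in(0,1]$ on the fiber line $\RR^I$. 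Consequently the subbundles $\{0\}\oplus\RR^I$, $E^{ss}_{\Lambda_0}\oplus\{0\}$ and $E^{cu}_{\Lambda_0}\oplus\{0\}$ are each $\Phi^X_t$-invariant over $\Lambda$.

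I would then set $E^{ss}:=E^{ss}_{\Lambda_0}\oplus\{0\}$ (one-dimensional) and $F:=E^{cu}_{\Lambda_0}\oplus\RR^I$ (three-dimensional); both are continuous and $\Phi^X_t$-invariant by the previous step, and $T_\Lambda\Omega=E^{ss}\oplus F$. Uniform contraction of $E^{ss}$ is immediate, since the restricted cocycle $\Phi^X_t|_{E^{ss}}$ is literally $D\phi^{X^0}_t|_{E^{ss}_{\Lambda_0}}$, which is uniformly contracted by (P5).

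For the domination $E^{ss}\prec F$, I would use that $E^{cu}_{\Lambda_0}\oplus\{0\}$ and $\{0\}\oplus\RR^I$ are orthogonal invariant subbundles of $F$, so that $\|\Phi^X_{-t}|_{F(\phi_t(x))}\|=\max\{\|\Phi^X_{-t}|_{E^{cu}_{\Lambda_0}(\phi_t x)}\|,\,a(x,t)^{-1}\}$ and it is enough to dominate $E^{ss}$ by each summand separately. Domination of $E^{ss}$ by $E^{cu}_{\Lambda_0}$ is exactly the domination in the singular hyperbolic splitting of (P5). For the fiber summand, submultiplicativity yields $\|\Phi^X_t|_{E^{ss}(x)}\|\le C_0\,\e^{\lambda^s_0 t}$ while $a(x,t)^{-1}\le \e^{\theta t}$, so the relevant product is at most $C_0\,\e^{(\lambda^s_0+\theta)t}$, which decays exponentially precisely because $\lambda^s_0<-\theta$ by the standing assumption \eqref{eq.eigenvalues}. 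Combining the two estimates gives the required domination, hence the partially hyperbolic splitting $T_\Lambda\Omega=E^{ss}\oplus F$ with $\dim E^{ss}=1$.

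The argument is elementary once the first step is in place; that step — the vanishing of $\partial_x g$ on $\Lambda$, which decouples the linearized flow and turns the fiber dynamics into a one-dimensional cocycle contracting at a rate weaker than $\lambda^s_0$ — is the only point requiring genuine care. Note in particular that $F$ is \emph{not} uniformly contracted: it contains the sectionally expanded $E^{cu}_{\Lambda_0}$, and even the fiber line is only neutrally contracted along $P$ by (C3), so the finer splitting $E^{ss}\oplus\RR^I\oplus E^{cs}$ valid at $\sigma$ does not persist over all of $\Lambda$ and is not needed here.
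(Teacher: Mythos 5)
Your proof is correct and follows essentially the same route as the paper's: use the product decomposition $T_\Lambda\Omega = T_{\Lambda_0}B^3\oplus\RR^I$, take $E^{ss}=E^{ss}_{\Lambda_0}\oplus\{0\}$ and $F=E^{cu}_{\Lambda_0}\oplus\RR^I$, get contraction on $E^{ss}$ from (P5), and obtain domination by treating $E^{cu}$ (via singular hyperbolicity of $\Lambda_0$) and $\RR^I$ (via the inequality $\lambda^s_0<-\theta$ from \eqref{eq.eigenvalues}) separately. The one thing you add is the explicit verification that $\partial_x g$ vanishes on $s=0$, so the variational equation is block diagonal and $\RR^I$ is genuinely $\Phi^X_t$-invariant — a point the paper's terse proof leaves implicit but which is indeed what makes the splitting well-defined.
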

\begin{proof}
Since $T_{\Lambda}\Omega=T_{\Lambda_0}B^3\oplus \RR^I$ and $T_{\Lambda_0}B^3$ admits a partially hyperbolic splitting $E^{ss}\oplus E^{cu}$, one obtains an invariant splitting $T_{\Lambda}\Omega=E^{ss}\oplus F$, where $F=E^{cu}\oplus \RR^I$. Since $E^{ss}$ is already dominated by $E^{cu}$, equation \eqref{eq.eigenvalues} implies that $E^{ss}$ is dominated by $F$. Moreover, $\Phi^X_t|_{E^{ss}}$ is contracting as it can be identified with $\Phi^{X_0}_t|_{E^{ss}}$. Hence $T_{\Lambda}\Omega=E^{ss}\oplus F$ is a partially hyperbolic splitting.
\end{proof}

Note that the splitting $F=E^{cu}\oplus \RR^I$ is invariant but not dominated. This is because the bundle $\RR^I$ is neutral on the periodic orbit $P=P_0\times\{0\}$ and cannot be dominated by the bundle $E^{cu}$ which contains the flow direction, and vice versa. Also, $F$ is not sectionally expanded. Hence $\Lambda$ is not singular hyperbolic. Nonetheless, the invariance of the splitting $T_{\Lambda}\Omega=E^{ss}\oplus E^{cu}\oplus \RR^I$ suggests also an invariant splitting of the extended normal bundle with one-dimensional subbundles.
\begin{Proposition}\label{prop.away-tangencies}
    The extended normal bundle $\wt{\cN}_{\cF(\Lambda)}$ admits a dominated splitting $\cN^{ss}\oplus\cN^I\oplus\cN_2$ with one-dimensional subbundles. Consequently, $X|_{\Lambda}$ is $C^1$ locally away from homoclinic tangencies.
\end{Proposition}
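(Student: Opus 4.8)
The plan is to produce the three-piece splitting of $\wt{\cN}_{\cF(\Lambda)}$ directly, as a refinement of the partially hyperbolic splitting of Lemma~\ref{lem.ph-splitting} carried to the extended normal bundle, and then to deduce the ``consequently'' from the now standard fact that a dominated splitting of $\wt{\cN}_{\cF(\Lambda)}$ into one-dimensional subbundles (hence of every index) prevents homoclinic tangencies in a $C^1$ neighborhood, invoking Lemmas~\ref{lem.semi-cont} and~\ref{lem.dom-equivalence} for the $C^1$-locality (cf.\ the references in the introduction, e.g.\ \cite{GYZ}). Three things are needed: a description of $\cF(\Lambda)$, the definition of the splitting, and the two dominations; the last of these, near the singularity, is where the choice \eqref{eq.eigenvalues} of the fiber rate $\theta$ is crucial.

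\emph{Structure at $\sigma$ and definition of the splitting.} The inequalities $\lambda^s\le\lambda_0^s<-\theta<\lambda^c<0<\lambda^u$ together with $\lambda^c+\lambda^u>0$ make $\sigma$ a Lorenz-like singularity, with $E^{ss}_\sigma$ the $2$-plane of exponents $\lambda^s,-\theta$, $E^c_\sigma$ the $\lambda^c$-line, $E^u_\sigma$ the $\lambda^u$-line, and $E^{cu}_\sigma=E^c_\sigma\oplus E^u_\sigma$. Since $\eta\equiv 0$ near $\sigma$, the flow is a local product there, so $W^{ss}(\sigma)$ meets $\Lambda$ only at $\sigma$ (it intersects $B^3\times\{0\}$ along $W^{ss}(O)\times\{0\}$, and $W^{ss}(O)\cap\Lambda_0=\{O\}$, a standard feature of the geometric Lorenz model). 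As $\Lambda$ is a chain transitive attractor containing $\sigma$ we have $C(\sigma,X)=\Lambda$, so Lemma~\ref{lem.loren-like-singularity} applies and gives, robustly, $\cF(\Lambda)\cap T_\sigma\Omega\subset Gr(1,E^{cu}_\sigma)$; combined with the Remark following the definition of $\cF$, every $l\in\cF(\Lambda)$ either lies over a regular point and equals $l^X_x$, or lies over $\sigma$ with $l\subset E^{cu}_\sigma$. In either case, orthogonally projecting off $l$ the bundles $E^{ss}$ (or $E^{ss}_O$ at $\sigma$), $\RR^I$, and $E^{cu}$ (or $E^{cu}_\sigma$ at $\sigma$) produces three pairwise transverse lines $\cN^{ss}(l),\cN^I(l),\cN_2(l)$ spanning $\wt{\cN}_l$; they depend continuously on $l\in\cF(\Lambda)$ and are $\tilde\psi_t$-invariant because $E^{ss},\RR^I,E^{cu}$ are $\Phi_t$-invariant and $\hat\Phi_t$ keeps $l$ inside $E^{cu}$.

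\emph{The two dominations.} The index-$1$ relation $\cN^{ss}\prec\cN^I\oplus\cN_2$ is inherited from Lemma~\ref{lem.ph-splitting}: $E^{ss}\oplus F$ is a $C^1$-robust dominated splitting of $T_\Lambda\Omega$ with $\dim E^{ss}=1$ and $\langle X\rangle\subset F$, so projecting off the flow direction gives a $\psi^X_t$-dominated splitting $\cN^{ss}\oplus\cN^F$ of $\cN_\Lambda$; over $\sigma$ one checks it directly, using $\|\Phi_{-t}|_{F_\sigma}\|\le Ce^{\theta t}$ (here $\theta>|\lambda^c|$ by \eqref{eq.eigenvalues}) against $\|\Phi_t|_{E^{ss}_O}\|\le Ce^{\lambda^s t}$ with $\lambda^s<-\theta$. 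Thus $\cN^{ss}\oplus\cN^F=\cN^{ss}\oplus(\cN^I\oplus\cN_2)$ is dominated over all of $\cF(\Lambda)$. For the index-$2$ relation it then suffices to show that $\cN^I\prec\cN_2$ inside $\cN^F$, since together with the index-$1$ relation and $\cN_2\subset\cN^F$ this yields $\cN^{ss}\oplus\cN^I\prec\cN_2$. Over $\sigma$ this is immediate: $\cN^I=\RR^I$ has forward rate $-\theta$ and $\cN_2=E^{cu}_\sigma\cap l^\perp\subset E^{cu}_\sigma$ has backward rate at most $-\lambda^c$, so $\|\tilde\psi_t|_{\cN^I}\|\cdot\|\tilde\psi_{-t}|_{\cN_2}\|\le Ce^{-(\theta+\lambda^c)t}$ with $\theta+\lambda^c>0$ by \eqref{eq.eigenvalues}.

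\emph{The crux.} It remains to prove $\cN^I\prec\cN_2$ over the regular part of $\cF(\Lambda)$, i.e.\ to bound $\|\psi^X_t|_{\RR^I(x)}\|\cdot\|\psi^X_{-t}|_{\cN_2(\phi_t x)}\|$ along orbits of $\Lambda$, where on $\Lambda$ one has $\cN^I=\RR^I$ (orthogonal to the flow, with instantaneous rate $-\theta(1-\eta(\cdot,0))\in[-\theta,0]$, hence never expanding) and $\cN_2=E^{cu}\cap X^\perp$. Away from $\sigma$ this is clear from (P5): the return map $R_0$ expands the $E^{cu}$-direction of $T\Sigma_0$ uniformly, i.e.\ $\psi^X_t$ expands $\cN_2$ uniformly, while $\RR^I$ does not expand. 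Near $\sigma$ one works in a linearizing chart and uses the identity $\|\psi^X_t|_{\cN_2(x)}\|=|\det(\Phi_t|_{E^{cu}_x})|\cdot\|X(x)\|/\|X(\phi_t x)\|$: following a passage from $\Sigma_0$ past $\sigma$ and back out, one sees that $\|\psi^X_{-t}|_{\cN_2}\|$ may grow, but only at rate at most $-\lambda^c$ over a time proportional to the length of the passage, while along that same portion $\RR^I$ contracts at rate $-\theta$; since $-\theta<\lambda^c$, the product decays at a fixed exponential rate, with constants uniform over all such passages. Concatenating near-$\sigma$ passages with the away-from-$\sigma$ segments yields uniform constants on $\Lambda\setminus\{\sigma\}$, hence, with continuity and the estimate over $\sigma$, on all of $\cF(\Lambda)$. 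This local estimate is the principal obstacle; once it is established, $\wt{\cN}_{\cF(\Lambda)}=\cN^{ss}\oplus\cN^I\oplus\cN_2$ is a dominated splitting into one-dimensional subbundles, and by Lemma~\ref{lem.dom-equivalence} every fundamental sequence of any chain class $C^1$-close to $\Lambda$ carries, with uniform constants, dominated splittings of all indices, which forbids homoclinic tangencies in a $C^1$ neighborhood of $X$.
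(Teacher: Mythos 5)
Your setup (identifying $\cF(\Lambda)$ over $\sigma$ inside $E^{cu}_\sigma$ via Lemma~\ref{lem.loren-like-singularity}, defining the three lines by projecting $E^{ss}$, $\RR^I$, $E^{cu}$ off the flow direction, and deducing the index-$1$ domination from Lemma~\ref{lem.ph-splitting}) aligns with what the paper does, and your derivation of the ``consequently'' part from Lemmas~\ref{lem.semi-cont}, \ref{lem.dom-equivalence} and the Wen--Gourmelon criterion is the same. Where you diverge genuinely is in proving the crucial index-$2$ domination $\cN^I\prec\cN_2$. The paper sidesteps the delicate orbit-by-orbit analysis entirely: following Ma\~n\'e~\cite{Ma2}, it suffices to show that for every ergodic invariant measure $\mu$ on $B(\Lambda)$ the three Lyapunov exponents are strictly ordered, $\eta^{ss}<\eta^I<\eta_2$. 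For singular $\mu$ (Dirac on $\langle E^c_\sigma\rangle$ or $\langle E^u_\sigma\rangle$) the exponents are read off directly from $\la^s<-\th<\la^c<\la^u$. For nonsingular $\mu$ the chain $\eta^{ss}\le\la^s_0<-\th\le\eta^I\le 0<\eta_2$ comes from global time-averages: $\eta^{ss}\le\la^s_0$ and $-\th\le\eta^I\le 0$ from the explicit skew-product form, and $\eta_2>0$ from the sectional expansion of $E^{cu}$. No cone concatenation is needed.

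Your ``crux'' paragraph is where there is a genuine gap. You propose to prove $\cN^I\prec\cN_2$ by patching a near-$\sigma$ passage estimate (using the identity $\|\psi_t|_{\cN_2(x)}\|=|\det(\Phi_t|_{E^{cu}_x})|\,\|X(x)\|/\|X(\phi_tx)\|$ and the comparison $-\th<\la^c$) with an away-from-$\sigma$ estimate (expansion of $\cN_2$ under the return map), and you state outright that this local estimate is ``the principal obstacle.'' The exponent comparison you invoke is the right one, and the sketch is morally sound, but it is not a proof: inside a single passage, $\cN_2$ switches regimes (roughly aligned with $E^u$ at the start, with $E^c$ near the end, so it first expands and then contracts), and the estimate must hold for \emph{every} intermediate time window, including windows straddling the entry/exit of the linearizing chart and windows starting mid-passage. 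Controlling the multiplicative constants uniformly over all such windows and concatenating with the between-return segments is exactly the nontrivial bookkeeping your argument leaves unresolved. The paper's ergodic-measure criterion is precisely what lets one avoid this: the time-averages are computed along entire orbits, so the transient variation of $\cN_2$ inside a passage washes out, and no uniformization across orbit segments is required. If you want to complete your direct route, you would need to carry out the adapted-metric/cone-field estimate carefully (as the paper itself does later in Section~\ref{sect.ls} for a related but different purpose); as it stands, your proposal identifies the correct splitting and the correct exponent gap but does not close the central domination estimate.
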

Here, a chain recurrence class is called {\em locally away from homoclinic tangencies} if there exists a neighborhood $U$ of the class and a $C^1$ neighborhood $\cU$ of $X$ such that any $Y\in\cU$ admits no homoclinic tangency in $U$.

Also, the sectionally expanding subbundle $E^{cu}$ in the splitting $T_{\Lambda}\Omega=E^{ss}\oplus E^{cu}\oplus \RR^I$ will be shown to exist robustly in the following sense.
\begin{Proposition}\label{prop.sect-exp}
  There exist a neighborhood $U\subset U_{\Lambda}$ of $\Lambda$ and a neighborhood $\cU$ of $X$ such that for any $Y\in \cU$ and any chain recurrence class $C_Y\subset U$, there is a continuous 2-dimensional subbundle $E^Y$ of $T_{C_Y}\Omega$ containing the flow direction such that it is invariant for $\Phi^Y_t$ and sectionally expanded. Moreover, if $C_Y=C(\si_Y,Y)$ is the chain recurrence class containing $\si_Y$, then $E^Y$ varies upper semi-continuously with respect to $Y\in\cU$.
\end{Proposition}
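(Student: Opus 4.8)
The plan is to manufacture $E^Y$ from the robust dominated splitting of the extended normal bundle supplied by Proposition~\ref{prop.away-tangencies}, and to verify sectional expansion by the standard robustness argument built on the estimate \eqref{eq.sect-exp} for $X^0$ together with the Lorenz-like structure at the singularity.

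First record the picture for $X$. The modification \eqref{eq.vector-field} affects only the fibre coordinate and does so trivially along $\Lambda=\Lambda_0\times\{0\}$, so $DX$ is block-diagonal at every point of $\Lambda$; hence $\Phi^X_t$ preserves $T_{\Lambda_0}B^3\subset T_\Lambda\Omega$ and restricts there to $\Phi^{X^0}_t$. Thus $E^X:=E^{cu}\subset T_{\Lambda_0}B^3$ is a two-dimensional $\Phi^X_t$-invariant subbundle of $T_\Lambda\Omega$ containing the flow direction; since $\dim E^X=2$ the only $2$-plane inside $E^X(x)$ is $E^X(x)$ itself, so \eqref{eq.sect-exp} asserts exactly that $E^X$ is sectionally expanded. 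The image of $E^{cu}$ in the extended normal bundle is the top (one-dimensional) subbundle $\cN_2$ of the dominated splitting $\cN^{ss}\oplus\cN^I\oplus\cN_2$ of $\wt\cN_{\cF(\Lambda)}$ from Proposition~\ref{prop.away-tangencies}, consistently with the fact that the fundamental directions at $\sigma$ lie in $E^{cu}_\sigma$ (Lemma~\ref{lem.loren-like-singularity}).

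To pass to perturbations, extend the dominated splitting of $\tilde\psi^X_t$ over $\cF(\Lambda)$ to a cone-field condition on a small neighbourhood $\cB$ of $\cF(\Lambda)$ in $G^1$; by continuity of the extended linear Poincar\'e flow there is a $C^1$ neighbourhood $\cU_0$ of $X$ such that for every $Y\in\cU_0$ each $\tilde\psi^Y_t$-invariant compact subset of $\cB$ carries a dominated splitting with three one-dimensional subbundles. By Lemma~\ref{lem.semi-cont} choose a neighbourhood $U\subset U_\Lambda$ of $\Lambda$ and $\cU\subset\cU_0$ of $X$ so that $\cF(C_Y)\subset\cB$ for every $Y\in\cU$ and every chain recurrence class $C_Y\subset U$; shrinking $U,\cU$ we also arrange that $\sigma_Y$ is the unique singularity in $U$, is Lorenz-like, and satisfies the hypothesis of Lemma~\ref{lem.loren-like-singularity}. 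Restricting the splitting on $\cB$ yields a $\tilde\psi^Y_t$-dominated splitting $\cN^{ss}_Y\oplus\cN^I_Y\oplus\cN_{2,Y}$ of $\wt\cN_{\cF(C_Y)}$. For a regular point $x\in C_Y$ one has $l^Y_x\in\cF(C_Y)$ (using that $\overline{B(C_Y)}\subset\cF(C_Y)$ for a chain recurrence class), so, identifying $\wt\cN_{l^Y_x}$ with $\cN_x$ (Remark~\ref{rem.normal-bundle}), we let $E^Y(x)$ be the unique $\Phi^Y_t$-invariant $2$-plane containing $Y(x)$ whose image in $\cN_x$ is $\cN_{2,Y}(l^Y_x)$; when $\sigma_Y\in C_Y$ set $E^Y(\sigma_Y):=E^{cu}_{\sigma_Y}$. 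Continuity at $\sigma_Y$ holds because $\cF(C_Y)\cap T_{\sigma_Y}\Omega\subset E^{cu}_{\sigma_Y}$ (Lemma~\ref{lem.loren-like-singularity}) and, by uniqueness of the dominated splitting, for each such direction $l$ the line $\cN_{2,Y}(l)$ is the complement of $l$ inside $E^{cu}_{\sigma_Y}$ (the two slower rates being carried by $E^{ss}_{\sigma_Y}$), so $l\oplus\cN_{2,Y}(l)=E^{cu}_{\sigma_Y}$ and $E^Y(x_n)\to E^{cu}_{\sigma_Y}$ whenever regular points $x_n\in C_Y$ converge to $\sigma_Y$. Thus $E^Y$ is a continuous, $\Phi^Y_t$-invariant, $2$-dimensional subbundle of $T_{C_Y}\Omega$ containing the flow direction. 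Moreover $\cN_{2,Y}$ varies continuously on $\cB$, the chain recurrence class $C(\sigma_Y,Y)$ depends upper semi-continuously on $Y$ (the hyperbolic singularity $\sigma_Y$ varying continuously), and $\cF$ is upper semi-continuous (Lemma~\ref{lem.semi-cont}); hence $E^Y$ over $C(\sigma_Y,Y)$ varies upper semi-continuously on $\cU$.

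Finally, sectional expansion of $E^Y$: since $\dim E^Y=2$ and $Y\in E^Y$, it suffices to prove $|\det(\Phi^Y_t|_{E^Y(x)})|\ge c\,\e^{\lambda t}$ for all $x\in C_Y$, $t\ge0$, with $c,\lambda$ depending only on $\cB,\cU$; and $|\det(\Phi^Y_t|_{E^Y(x)})|=\tfrac{\|Y(\phi^Y_tx)\|}{\|Y(x)\|}\,\|\psi^Y_t|_{\cN_{2,Y}(x)}\|$. For $(Y,C_Y)=(X,\Lambda)$ this bound is precisely \eqref{eq.sect-exp}, and it persists by the usual robustness argument for sectional expansion: on orbit segments that stay outside a fixed neighbourhood of $\sigma_Y$ one compares $\det\Phi^Y_t|_{E^Y}$ with $\det\Phi^X_t|_{E^X}$ over nearby segments in $\Lambda$ (using that $E^Y$ is the continuous continuation of $E^X$ and $C_Y$ is Hausdorff-close to $\Lambda$), while on a segment of length $\tau$ passing through that neighbourhood, continuity of the flow near $\sigma_Y$ together with $\lambda^c+\lambda^u>0$ forces $\det\Phi^Y_t|_{E^Y}$ to grow by at least a factor $\e^{\lambda'\tau}$; since $t_p>2$ (property (C1)) bounds the number of such near-$\sigma_Y$ passages per unit time, concatenating gives the required uniform exponential growth. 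The hard part is precisely this behaviour near the singularity --- extending $E^Y$ continuously across $\sigma_Y$ and keeping the determinant estimate alive through a neighbourhood of $\sigma_Y$. It is made to work by the Lorenz-like nature of $\sigma_Y$: Lemma~\ref{lem.loren-like-singularity} confines the fundamental limit at $\sigma_Y$ to the sectionally expanded plane $E^{cu}_{\sigma_Y}$, and the inequality $\lambda^c+\lambda^u>0$ ensures that the flow speed lost on entering a neighbourhood of $\sigma_Y$ is overcompensated by the expansion along $\cN_{2,Y}$. It is also the reason one must work throughout on the extended normal bundle over $\cF(C_Y)$, rather than on the ordinary normal bundle over the regular part of $C_Y$.
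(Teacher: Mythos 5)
Your proof is correct and follows essentially the same route as the paper's: both construct $E^Y$ by projecting the top subbundle of the robustly persistent dominated splitting $\cN_1^Y\oplus\cN_2^Y$ of $\wt{\cN}_{\cF(C_Y)}$ (localized via Lemma~\ref{lem.semi-cont}), establish continuity at $\sigma_Y$ through Lemma~\ref{lem.loren-like-singularity}, and deduce sectional expansion from closeness to $E^{cu}_\Lambda$ together with \eqref{eq.sect-exp}. The paper factors the bundle construction through the slightly more general Proposition~\ref{prop.sectional-expanding} before specializing, and disposes of sectional expansion with the brief observation that $E^Y$ is uniformly close to $E^{cu}_\Lambda$ (a pointwise time-$1$ comparison plus the cocycle property, with no separate near-singularity analysis needed since $E^Y(\sigma_Y)=E^{cu}_{\sigma_Y}$); your more elaborate discussion of near-singularity passages and ``nearby segments in $\Lambda$'' is not wrong but is unnecessary, and the latter phrase could misleadingly suggest an orbit-shadowing step that is neither available nor required here.
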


The proof of Proposition \ref{prop.away-tangencies} and Proposition \ref{prop.sect-exp} will be given in Section \ref{sect.away-from-ht} and Section \ref{sect.sectional-expanding}, respectively.

By construction, the chain recurrence class $\Lambda$ is an attractor containing the unique singularity $\sigma=(O,0)$. We will show that for any $C^1$ vector field $Y$ close enough to $X$, the chain recurrence class $C(\sigma_Y,Y)$ is nontrivial and Lyapunov stable. Moreover, it is the only Lyapunov stable chain recurrence class in $U_{\Lambda}$, where $\sigma_Y$ is the continuation of the singularity. Precisely, we have

\begin{Proposition}\label{prop.ls-class}
	There exists a $C^1$ neighborhood $\cU$ of $X$ such that for any $Y\in \cU$, the chain recurrence class $C(\si_{Y},Y)$ is the unique Lyapunov stable chain recurrence contained in $U_{\Lambda}$. Moreover, $C(\si_{Y},Y)$ contains the periodic orbit $Q_Y$, which is the continuation of $Q$.
\end{Proposition}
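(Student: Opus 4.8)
The statement has two parts: (i) that $C(\sigma_Y,Y)$ is Lyapunov stable and is the only Lyapunov stable chain recurrence class in $U_\Lambda$, and (ii) that it contains the continuation $Q_Y$ of the periodic orbit $Q$. I would establish both by first showing that $C(\sigma_Y,Y)$ is an \emph{attractor} for every $Y$ near $X$, and then analyzing which chain recurrence classes can be Lyapunov stable inside the trapping region $U_\Lambda$.

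For the attractor/Lyapunov-stability claim, recall that by construction $\phi_t^X(\overline{U_\Lambda})\subset U_\Lambda$ for $t>0$ — $U_\Lambda = U_{\Lambda_0}\times(-1,1)$ is a trapping region, and this is an open condition on the vector field (transversality of $Y$ to $\partial(U_{\Lambda_0}\times(-1,1))$, which needs a slight adjustment near $s=\pm1$ but the contraction along fibers handles this). So for $Y$ in a $C^1$-neighborhood $\cU$ of $X$, the maximal invariant set $\Lambda_Y=\bigcap_{t\ge0}\phi_t^Y(U_\Lambda)$ is an attractor. I would then argue that $\Lambda_Y$ is \emph{chain transitive}, hence equals a single chain recurrence class; this is where I expect to lean on the density of stable manifolds. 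By property (C2), $W^s(Q,X)$ is dense in $U_\Lambda$, and $Q$ is homoclinically related to a periodic orbit $P_0$ whose continuation persists; density of the stable manifold of $Q_Y$ should persist (this is essentially the content used in Proposition \ref{prop.ls-class} and presumably proved, or cited via the structure of the Lorenz attractor and the skew-product), which forces every point of $U_\Lambda$ whose forward orbit stays in $U_\Lambda$ to be chain-related to $Q_Y$ and to $\sigma_Y$. Since an attractor is always Lyapunov stable, $C(\sigma_Y,Y)=\Lambda_Y$ is Lyapunov stable and contains $Q_Y$.

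For uniqueness of the Lyapunov stable class inside $U_\Lambda$: any Lyapunov stable chain recurrence class $C'\subset U_\Lambda$ has a basin which is open, and I would show its basin must intersect $W^s(Q_Y,Y)$ — indeed, by density of $W^s(Q_Y,Y)$ in $U_\Lambda$ and openness of the basin of $C'$, there is a point $z$ with $\omega(z)=\orb(Q_Y)$ and $z$ in the basin of $C'$; Lyapunov stability of $C'$ then forces $\orb(Q_Y)\subset C'$, whence $C' = C(Q_Y,Y) = C(\sigma_Y,Y)$. The only subtlety is that "basin of a Lyapunov stable class" must be set up correctly — a Lyapunov stable chain recurrence class is the intersection of a nested sequence of trapping regions, and its "basin" (the set of points eventually entering every such trapping region) is open and forward-invariant — so I would phrase the argument in terms of these trapping regions rather than $\omega$-limits directly.

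The main obstacle I anticipate is \textbf{establishing robust density of $W^s(Q_Y,Y)$ in $U_\Lambda$} (equivalently, robust chain transitivity of $\Lambda_Y$). For the unperturbed $X$ this follows from the corresponding property (P7) of the geometric Lorenz model together with the topological contraction along the fiber; but for a perturbed $Y$ one no longer has the skew-product structure, the singularity $\sigma_Y$ may no longer be "Lorenz-like" in the naive product sense, and the return map to $\Sigma$ is only a mild perturbation of $R$. I would handle this by combining: the robustness of the expansion on the (robustly sectionally expanded) subbundle $E^Y$ from Proposition \ref{prop.sect-exp}, which gives robust expansion of the return map in the unstable cone; the robustness of the strong stable foliation; and the fact that the blender/heterodimensional-cycle part of the construction, sitting near $P$, connects the relevant pieces. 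This is precisely the point the introduction flags as "the most difficult part of our proof," so I would expect the genuine work — likely the content of Section \ref{sect.ls} — to be a careful cross-section and first-return-map analysis showing that every chain class in $U_\Lambda$ other than the one built around $\sigma_Y$ and $Q_Y$ fails to be Lyapunov stable because its unstable set escapes into the basin of $C(\sigma_Y,Y)$.
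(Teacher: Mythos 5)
Your structure — prove that the maximal invariant set $\Lambda_Y=\bigcap_{t\ge0}\phi_t^Y(U_\Lambda)$ is chain transitive and hence is itself the unique (Lyapunov stable) chain class — is genuinely different from the paper's, and it has a serious problem: if $\Lambda_Y$ were chain transitive, then $C(\sigma_Y,Y)$ would equal $\Lambda_Y$ and in particular would be isolated, but the authors explicitly record in their Question~1 (``Is the chain recurrence class $C(\sigma_Y,Y)$ (robustly) transitive? Or even isolated?'') that they do not know this. The proposition as stated does not need it, and the paper carefully avoids claiming it. What the paper actually proves is the weaker statement that \emph{any Lyapunov stable chain class in $U_\Lambda$ must contain $\sigma_Y$}, and separately that $Q_Y$ and $\sigma_Y$ are chain-related. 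Your first paragraph would, if carried out, settle an open problem, so it cannot be filled in by the techniques at hand.

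You also lean on ``robust density of $W^s(Q_Y,Y)$ in $U_\Lambda$,'' flag it as the main obstacle, and hope to get it from the return map. The paper does not try to establish robust density. Instead it builds an invariant, expanding cone field $\cD^X_{\alpha_1}$ on $\Sigma$ in the direction of $\cN_2$ (Lemmas~\ref{lem.cone-0}--\ref{lem.expanding-2}), shows that iterating any $cu$-curve under $R^Y$ eventually produces a $cu$-curve of a uniform length $\vep_0$ (Lemma~\ref{lem.cu-curve2}), and then uses only \emph{continuity of local stable manifolds} plus density of $W^s(Q,X)$ for the unperturbed field to conclude that any $cu$-curve of length $\vep_0$ already meets $W^s(Q_Y,Y)$. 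This trades a hard robust-density statement for a soft compactness/continuity argument. The resulting key technical lemma (Lemma~\ref{lem.unique-ls-class}) says: for any compact $\phi_t^Y$-invariant $\Gamma\subset U_1$ with $\Gamma\cap\Sing(Y)=\emptyset$, both $W^u(\Gamma,Y)\cap W^s(Q_Y,Y)\neq\emptyset$ and $W^u(\Gamma,Y)\cap W^s(\sigma_Y,Y)\neq\emptyset$. Uniqueness then follows immediately: a Lyapunov stable class $C'$ not containing $\sigma_Y$ would be non-singular, hence $W^u(C',Y)$ hits $W^s(\sigma_Y,Y)$, and Lyapunov stability forces $\sigma_Y\in\Cl(W^u(C',Y))\subset C'$, contradiction. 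This replaces your ``basin of $C'$ meets $W^s(Q_Y,Y)$'' step, which relied on the density you cannot robustly obtain.

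Finally, you omit a genuine subtlety in showing $Q_Y\subset C(\sigma_Y,Y)$: a priori the class $C(\sigma_Y,Y)$ might contain no non-singular compact invariant set at all (every $\omega$-limit could hit $\sigma_Y$), in which case Lemma~\ref{lem.unique-ls-class} has nothing to bite on. The paper handles this by a dichotomy via \cite[Proposition 4.9]{GY} (Lemma~\ref{lem.mixingdominated}): if every $\omega$-limit meets $\Sing(Y)$, the class is forced to be singular hyperbolic and Lyapunov stable, and then \cite[Corollary D]{PYY21} produces a periodic orbit inside it, a contradiction; otherwise one finds a non-singular invariant $\Gamma\subset C(\sigma_Y,Y)$ and applies Lemma~\ref{lem.unique-ls-class} twice (to $\Gamma$ and to $Q_Y$) to close the chain $C(\sigma_Y,Y)\rightsquigarrow Q_Y\rightsquigarrow\sigma_Y$. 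Your outline has no mechanism for ruling out the singular-hyperbolic degeneration, so this step is a second real gap.
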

The proof of Proposition \ref{prop.ls-class} will be given in Section \ref{sect.ls}.

\subsection{Robust heterodimensional cycles}\label{sect:robust-cycles}

Let us continue the construction of the example. We will prove the following result.

\begin{Lemma}\label{lem.hc}
  In every $C^1$ neighborhood of $X$ there is an open subset $\cV$ of vector fields such that for each $Y\in\cV$ the chain recurrence class $C(\sigma_{Y},Y)$ contains a robust heterodimensional cycle.
\end{Lemma}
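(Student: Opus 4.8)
The plan is to start from the vector field $X$ constructed in Section \ref{sect.skew-product} and to modify it further inside the neighborhood $V_P$ of the periodic orbit $P$ so as to unfold a heterodimensional cycle robustly. Recall that by (C3) the periodic orbit $P\subset\Lambda$ is non-hyperbolic, with a zero Lyapunov exponent along the fiber direction $\RR^I$; its strong stable manifold $W^{ss}(P,X)$ meets $W^u(Q,X)$ and its unstable manifold meets $W^s(Q,X)$ transversally. The first step is to perform a DA-type surgery along $P$: inside $V_P$ we push the fiber dynamics so that $P$ becomes a hyperbolic periodic orbit of stable index $3$ (contracting along $\RR^I$), while simultaneously a new hyperbolic periodic orbit $P'$ of stable index $2$ is created nearby (this is the standard ``derived from Anosov'' bifurcation: a source--to--saddle transition in the one-dimensional fiber produces a pair of orbits of consecutive indices). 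Since the perturbation is supported in $V_P$, which is disjoint from $Q$ and $\sigma$, the rest of the dynamics — in particular the singular hyperbolic Lorenz part, the density of $W^s(Q,X)$, and the attracting region $U_\Lambda$ — is untouched.

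The second step is to check that the heterodimensional cycle between the two periodic orbits of index $2$ and $3$ is present and can be made robust. After the surgery, $P$ (index $3$) and $Q$ (index $2$) — or alternatively $P$ and $P'$ — form a heterodimensional cycle: the intersections inherited from (C3), namely $W^{ss}(P,X)\cap W^u(Q,X)\neq\emptyset$ and $W^u(P,X)\pitchfork W^s(Q,X)\neq\emptyset$, persist after the modification because they take place outside a neighborhood of $P$ where the surgery is concentrated (or, if the modification disturbs them, they are recovered by a further arbitrarily small perturbation). Then I would invoke the blender construction of Bonatti--D\'iaz \cite{BD,BDV}: because the cycle is associated with a pair of periodic saddles of consecutive indices, one can perturb it to create a blender — a hyperbolic set whose unstable manifold robustly behaves as a set of one higher dimension — and with the blender in hand the cycle becomes $C^1$ robust. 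This yields a $C^1$ open set $\cV$ in every neighborhood of $X$ such that each $Y\in\cV$ has a robust heterodimensional cycle inside $U_\Lambda$ between two hyperbolic periodic orbits of indices $2$ and $3$.

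The third step is to place this robust heterodimensional cycle inside the chain recurrence class $C(\sigma_Y,Y)$. By Proposition \ref{prop.ls-class}, for $Y$ close to $X$ the class $C(\sigma_Y,Y)$ is the unique Lyapunov stable chain recurrence class in $U_\Lambda$ and it contains the continuation $Q_Y$. Since the periodic orbits involved in the cycle are, by construction, homoclinically related to $Q$ (the orbit $P_0$ was chosen in (P7)/Section \ref{sect.skew-product} to be homoclinically related to $Q_0$, hence $P$ is related to $Q$ before the surgery, and the heteroclinic connections of the cycle link the newly created orbit to $Q_Y$), they all chain-connect to $Q_Y$ and therefore lie in $C(\sigma_Y,Y)$. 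Robustness of the cycle together with robustness of the class's chain-connection to $Q_Y$ gives that, shrinking $\cV$ if necessary, the whole robust heterodimensional cycle is contained in $C(\sigma_Y,Y)$ for all $Y\in\cV$.

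The main obstacle I expect is the second step: producing a genuinely $C^1$-\emph{robust} heterodimensional cycle rather than a fragile one. This requires setting up the DA surgery so that the geometry of the stable/unstable laminations along $P$ matches the hypotheses of the blender criterion (a covering property of a family of ``strips'' in a cross-section under the return map), and verifying that the contraction rate $\theta$ chosen earlier — which was pinned down in \eqref{eq.eigenvalues} to keep the example away from homoclinic tangencies — is compatible with the domination and covering inequalities needed for the blender. Coordinating these constraints, while keeping the support of all perturbations inside $V_P$ and away from $\sigma$ and $Q$, is the delicate point; the chain-recurrence bookkeeping in step three is comparatively routine given Proposition \ref{prop.ls-class}.
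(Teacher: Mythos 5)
Your overall strategy is the same as the paper's: keep the DA-type modification supported in $V_P$, run the Bonatti--D\'iaz machinery to produce a blender and a robust heterodimensional cycle, then use Proposition~\ref{prop.ls-class} to locate it inside $C(\sigma_Y,Y)$. There is, however, a real gap in your third step, and also an index slip worth flagging.

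On the index slip: $P_0$ is a periodic orbit of the three-dimensional Lorenz attractor, so its normal bundle is $E^{ss}\oplus E^u$ with each summand one-dimensional; after lifting to $\Omega$ the normal bundle of $P$ is three-dimensional with one strong-stable, one neutral (fiber), and one unstable direction. Hyperbolizing $P$ can therefore only make it of stable index $1$ or $2$, and its twin of stable index $2$ or $1$ — never $3$. (Stable index $3$ in $\Omega$ would make the orbit a sink.) The paper's convention after the blow-up is that $P$ has index $1$ and the new saddles have index $2$.

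The genuine gap is in your claim that ``they all chain-connect to $Q_Y$ and therefore lie in $C(\sigma_Y,Y)$.'' The heteroclinic data of (C3) gives, before the surgery, the quasi-transverse intersection $W^u(Q,X)\cap W^{ss}(P,X)$ and the transverse intersection $W^u(P,X)\pitchfork W^s(Q,X)$. The latter persists and gives one direction of the chain connection robustly. But after the DA blow-up, the robust cycle produced by \cite{BD} lives between a blender containing (the hyperbolized) $P_Y$ and a newly created index-$2$ saddle $P_\delta$, and the direction you still need is $W^u(Q_Y,Y)\pitchfork W^s(P_\delta,Y)\neq\emptyset$ — an intersection involving the \emph{new} orbit, not inherited from (C3). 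When the saddle-node is blown up into a pair as in \cite[Section 4]{BD}, the twin $P_\delta$ sits on one particular side of the old neutral center, and there is no a priori reason for $W^u(Q_Y,Y)$ to reach its stable manifold rather than falling on the other side. This is exactly what the paper fixes by blowing up to a \emph{triplet} $P, P_\delta, P_{-\delta}$ (Figure~\ref{fig.2}(b)): with index-$2$ saddles on both sides of $P$, the inherited quasi-transverse intersection $W^u(Q,X)\cap W^{ss}(P,X)$ guarantees, after an arbitrarily small perturbation, that $W^u(Q_Y,Y)$ transversally meets $W^s$ of at least one of them, and the Inclination Lemma plus the blender's covering property then give the robust chain connection in both directions. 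Without this triplet device, your step three is unsubstantiated: you have a robust cycle near $\Lambda$, but not a proof that it is contained in $C(\sigma_Y,Y)$.
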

\begin{Remark}
  By the lemma, the vector field $X$ can be $C^1$ approximated by vector fields with robust heterodimensional cycles.
  Also, it follows from the construction that $X$ can be $C^1$ approximated by vector fields with singular hyperbolic attractors. Therefore, $X$ is on the boundary between singular hyperbolicity and robust heterodimensional cycles.
\end{Remark}

The proof of Lemma \ref{lem.hc} is essentially contained in \cite[Section 4]{BD}. We say that a periodic orbit is a {\em saddle-node} if its normal bundle has an (orientable) one-dimensional center along which the Lyapunov exponent is zero, and all other Lyapunov exponents are nonzero.
For the vector field $X$, the periodic orbit $P$ is a saddle-node with a one-dimensional center in the fiber direction, along which the Lyapunov exponent is zero. The saddle-node $P$ has a strong stable manifold $W^{ss}(P,X)$ contained in $B^3\times\{0\}$ (as in the 3-dimensional Lorenz attractor). Also its unstable manifold $W^u(P,X)$ is contained in $B^3\times\{0\}$. The saddle-node $P$ has a {\em strong homoclinic intersection}: there exists $r\notin P$ which belongs to the intersection $W^{ss}(P,X)\cap W^u(P,X)$. Moreover, the intersection is {\em quasi-transverse}: $\dim (T_rW^{ss}(P,X)\oplus T_rW^u(P,X))=3<\dim \Omega=4$.

\begin{Lemma}[{\cite[Theorem 4.1]{BD}}]\label{lem.robust-cycles}
  Let $X$ be a $C^1$ vector field with a quasi-transverse strong homoclinic intersection associated to a saddle-node. Then in every $C^1$ neighborhood of $X$ there is an open set of vector fields exhibiting robust heterodimensional cycles.
\end{Lemma}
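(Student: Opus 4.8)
The strategy is to reduce everything to the already-established diffeomorphism result, namely the construction of robust heterodimensional cycles from a quasi-transverse strong homoclinic intersection associated to a saddle-node periodic point, which is exactly the content of \cite[Section 4]{BD}. Since $P$ is a periodic orbit (not a fixed point), the natural object to work with is the first-return map to a transverse cross-section.

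First I would fix a small cross-section $D$ of the flow transverse to $X$ through a point $p\in P$, and let $f$ be the first-return (Poincar\'e) map defined on a neighborhood of $p$ in $D$; here $\dim D=3$. Under $f$, the point $p$ is a fixed point, and the hypotheses on $P$ translate directly: $p$ is a saddle-node fixed point of $f$ (its derivative has one eigenvalue equal to $1$ in modulus with orientable one-dimensional center, corresponding to the fiber direction, and all other eigenvalues off the unit circle), and the quasi-transverse strong homoclinic intersection $r\in W^{ss}(P,X)\cap W^u(P,X)$ with $\dim(T_rW^{ss}\oplus T_rW^u)=3$ produces (after pushing $r$ forward to the section along the flow) a quasi-transverse strong homoclinic point for $f$: the traces of the strong-stable and unstable sets of $p$ inside $D$ meet along an orbit whose tangent spaces sum to a $2$-dimensional subspace of the $3$-dimensional $T_\bullet D$. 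The dimension count $\dim\Omega=4$ becomes $\dim D=3$, so the deficiency (quasi-transversality) is preserved. This is the standard translation between flow dynamics near a periodic orbit and local-diffeomorphism dynamics on a cross-section, and should be carried out with a little care regarding uniformity of return times (we already know return times are bounded below by $2$ on $\Sigma$, so the section is well-defined and the return map is $C^1$).

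Next, I would apply \cite[Theorem 4.1]{BD} to the $C^1$ local diffeomorphism $f$ on the $3$-manifold $D$: in every $C^1$ neighborhood of $f$ there is a nonempty open set of local diffeomorphisms exhibiting robust heterodimensional cycles (between hyperbolic sets of indices $i$ and $i+1$, arising through the partial unfolding of the saddle-node together with a blender). The final step is to transfer this back to vector fields. A $C^1$ perturbation of the return map $f$ can be realized by a $C^1$ perturbation of the vector field $X$ supported in a thin flowbox around the section $D$ minus $P$ — a routine suspension/flowbox argument (one modifies $X$ in a small tubular neighborhood, away from the singularity $\si$ and away from the rest of the attractor); conversely nearby vector fields induce nearby return maps, so the open set of perturbed return maps pulls back to a $C^1$ open set $\cV$ of vector fields. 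Robustness of the heterodimensional cycle for $f$ (it persists under $C^1$ perturbations of $f$) then gives robustness for the flow, since the cycle is made of transverse and robust intersections of the (continuations of the) invariant manifolds of the suspended hyperbolic sets; and the involved hyperbolic sets, being continuations of those in $D$, lie in the chain recurrence class of $\si_Y$ by Proposition \ref{prop.ls-class} once one checks the cycle is chain-connected to $\si$ — but for the present lemma one only needs the existence of the robust cycle, so this last point can be deferred.

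The main obstacle is the bookkeeping in the transfer between the flow and its cross-section map — specifically, checking that (i) a $C^1$-small perturbation of the Poincar\'e return map is induced by a $C^1$-small, compactly supported perturbation of the vector field (and that this support can be taken disjoint from $\si$ and from $Q$), and (ii) the quasi-transversality and saddle-node data are genuinely preserved under this back-and-forth, including the orientability of the one-dimensional center. None of this is deep, but it is the place where one must be careful; the heart of the matter — producing the robust cycle from a quasi-transverse strong homoclinic intersection at a saddle-node — is entirely supplied by \cite[Theorem 4.1]{BD}, which we invoke as a black box.
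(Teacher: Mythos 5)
The paper itself gives no proof of this lemma; it is imported verbatim as \cite[Theorem 4.1]{BD}. That theorem, however, is proved by Bonatti and D{\'i}az in the setting of diffeomorphisms, so a reduction from flows to their diffeomorphism result is tacitly assumed. Your proposal supplies exactly this reduction, and it is the correct one. The translation of the hypotheses is handled properly: a local transverse section $D$ through $p\in P$ turns the saddle-node periodic orbit into a saddle-node fixed point of the first-return map $f$, and the quasi-transversality count $\dim(T_rW^{ss}(P,X)\oplus T_rW^u(P,X))=3<4=\dim\Omega$ for the flow descends to $1+1=2<3=\dim D$ for the traces in the section, which is precisely the hypothesis of BD's Theorem 4.1. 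The flowbox argument for realizing a $C^1$-small perturbation of $f$ by a compactly supported $C^1$-small perturbation of $X$, and the reverse implication that nearby fields give nearby return maps, is the standard and correct way to transfer the open set of perturbations back to $\xX^1(M)$. You are also right to defer the question of whether the cycle lands in $C(\si_Y,Y)$: that is the content of Lemma \ref{lem.hc}, not of this statement.

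One small imprecision: the lower bound $t_p>2$ on return times concerns the global cross-section $\Sigma$, whereas here you want a small local section $D$ at a point of $P$; there the return time is close to the period of $P$, which is bounded away from zero uniformly over a $C^1$ neighborhood of $X$, and that is what makes the Poincar\'e map well-defined and $C^1$. This is a cosmetic slip and does not affect the argument.
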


It follows from the discussions above and Lemma \ref{lem.robust-cycles} that there exists vector field $Y$ arbitrarily close to $X$ which exhibits a robust heterodimensional cycle. Thus, to prove Lemma \ref{lem.hc}, it remains to show that the robust heterodimensional cycle is contained in $C(\si_{Y},Y)$. For this, we need to slightly change the proof of Lemma \ref{lem.robust-cycles} ({\cite[Section 4]{BD}}), in which a key ingredient is the creation of robust heterodimensional cycles through {\em blenders}. Roughly speaking, a blender is a hyperbolic set whose stable (or unstable) set behaves as if it is one dimensional higher than it actually is. One can refer to \cite[Chapter 6]{BDV} for more discussions on blenders.

\begin{proof}[Proof of Lemma \ref{lem.hc}]
  We have seen that the periodic orbit $P$ is a saddle-node associated to which there is a quasi-transverse strong homoclinic intersection. In fact, we can take two quasi-transverse strong homoclinic intersections $z$ and $w$ associated to $P$ such that their orbits are disjoint. Following the proof of Lemma \ref{lem.robust-cycles} (\cite[Section 4]{BD}), one can make arbitrarily small $C^1$ perturbations in a neighborhood of $\Cl(\orb(z,X)\cup\orb(w,X))$ (containing $P$) to obtain a vector field $Y$ having a blender $\Lambda'$, together with a hyperbolic periodic orbit $P_{\delta}$ close to $P_Y=P$ such that:
  \begin{itemize}
	\item the blender $\Lambda'$ is a hyperbolic set of stable index $1$ containing the periodic orbit $P_Y$;
	\item the periodic orbit $P_{\delta}$ has stable index $2$, and its stable manifold intersect transversely the unstable manifold of $P_Y$;
	\item the unstable manifold of $P_{\delta}$ meets robustly the stable set of the blender $\Lambda'$ (by the property of the blender);
	\item thus, the vector field $Y$ has a robust heterodimensional cycle associated with the hyperbolic set $\Lambda'\supset P_Y$ and the hyperbolic saddle $P_{\delta}$.
  \end{itemize}
  We can assume that $Y$ is close enough to $X$ so that by Proposition \ref{prop.ls-class}, $C(\si_Y,Y)$ robustly contains the periodic orbit $Q_Y$. Thus, to guarantee that $C(\si_Y,Y)$ contains the robust heterodimensional cycle, we make sure that $Q_Y$ and $P_Y$ are robustly contained in the same chain recurrence class. A slightly different procedure should be followed when making the perturbations to obtain the robust heterodimensional cycle.

  Note that $W^s(Q,X)$ intersects $W^u(P,X)$ transversely. As $Y$ is $C^1$ close to $X$, we can assume that $W^s(Q_Y,Y)$ and $W^u(P_Y,Y)$ intersects robustly. Now if $W^u(Q_Y,Y)\pitchfork W^s(P_{\delta},Y)\neq\emptyset$, the Inclination Lemma would imply that $W^u(Q_Y,Y)$ accumulates on $W^u(P_{\delta},Y)$ and hence meets transversely the characteristic region of the blender $\Lambda'$. By the property of blender and since $W^s(Q_Y,Y)$ intersects $W^u(P_Y,Y)$ robustly, we would obtain that $Q_Y$ and $P_Y$ are robustly contained in the same chain recurrence class.

  \begin{figure}[htbp]
  \centering
  \subcaptionbox{Blow up to twins}{
    \includegraphics[width=0.21\linewidth]{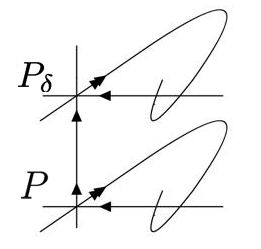}
    }
  \qquad
  \subcaptionbox{Blow up to triplets}{
    \includegraphics[width=0.2\linewidth]{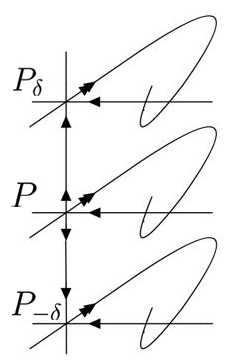}
    }
  \caption{DA-type surgery at the saddle-node $P$}\label{fig.2}
\end{figure}

  Therefore, when perturbing to obtained the periodic orbit $P_{\delta}$ we need to make sure that $W^u(Q_Y,Y)\pitchfork W^s(P_{\delta},Y)\neq\emptyset$. Recall that in \cite[Section 4]{BD}, $P_{\delta}$ is obtained as a twin of $P$ by blowing up the saddle-node $P$ along the neutral center, as shown in Figure \ref{fig.2}(a). In stead of obtaining only a twin $P_{\delta}$, we blow up the saddle-node to obtain a set of triplet saddles, $P$, $P_{\delta}$ and $P_{-\delta}$, as shown in Figure \ref{fig.2}(b). See also \cite{Ma} for a similar construction. The saddles $P_{\delta}$ and $P_{-\delta}$ are both of index 2, while $P$ has index 1. Note that for the vector $X$, there is a quasi-transverse intersection between $W^u(Q,X)$ and $W^{ss}(P,X)$. After the blow-up, we can assume without loss of generality that $W^u(Q_Y,Y)\pitchfork W^s(P_{\delta},Y)\neq\emptyset$. Then the construction continues as in \cite[Section 4]{BD}. This ends the proof of Lemma \ref{lem.hc}.
\end{proof}

\subsection{Proof of Theorem A}

Let $M$ be any 4-dimensional closed Riemannian manifold. One considers a gradient-like vector field $Z_0$ on $M$, which has at least a sink. Consider a local chart around a sink and a ball $B^4$ in the chart, centered at the sink. By shrinking the local chart, we assume that the ball $B^4$ is an attracting region of the flow. In particular, the vector field $Z_0$ is transverse to $\partial B^4$ and inwardly pointing. By changing the metric, we assume that $\frac{1}{2}B^4$ contains $\Omega=B^3\times [-1,1]$. Let $X$ be a vector field on $\Omega$ as constructed in Section \ref{sect.lorenz} and Section \ref{sect.skew-product}. One then modifies the vector field $Z_0$ in a neighborhood of $\frac{1}{2}B^4$ to obtain a $C^1$ vector field $Z$ such that it coincides with $X$ on $\Omega\subset\frac{1}{2}B^4$. The modification shall be taken outside a neighborhood of $M\setminus B^4$, thus $Z$ coincides with $Z_0$ in a neighborhood of $M\setminus B^4$.
Moreover, since the vector field on $\Omega$ is transverse to each smooth component of $\partial \Omega$ and inwardly pointing, one can require that the obtained vector field $Z$ has no recurrence in $B^4\setminus \Omega$. Then the chain recurrence set of $Z$ is composed of finitely many hyperbolic critical elements and the maximal invariant set in $\Omega$.

The following properties can be verified.
\begin{enumerate}[label=(\arabic*)]
  \item There is nontrivial chain recurrence class $C(\si,Z)$ associated to a hyperbolic singularity $\si\in\Omega$.
  \item There exist a neighborhood $U\subset \Omega$ of $C(\si,Z)$ and a neighborhood $\cU$ of $Z$ such that for any $Y\in \cU$, the continuation $C(\si_Y,Y)$ is well-defined and contained in $U$. In particular, $\si_Y$ is the only singularity in $U$. Moreover, since $C(\si,Z)$ is an attractor, $U$ can be chosen to be an attracting neighborhood of $C(\si,Z)$. By reducing $\cU$, we assume that $U$ remains an attracting region for any $Y\in\cU$ and the chain recurrent set of $Y$ is composed of finitely many hyperbolic critical elements plus the maximal invariant set in $U$.
  \item For any $Y\in\cU$, there exists no homoclinic tangency in $U$ (Proposition \ref{prop.away-tangencies}). As the chain recurrent set of $Y$ outside $U$ is composed of finitely many hyperbolic critical elements, one concludes that $Y$ is away from homoclinic tangencies.
  \item There is a partially hyperbolic splitting $T_{C(\si_Y,Y)}M=E^{ss}\oplus F$ with $\dim E^{ss}=1$ (Lemma \ref{lem.ph-splitting}). The subbundle $F$ contains a continuous 2-dimensional subbundle $E^Y\subset T_{C(\si_Y,Y)}M$ containing the flow direction such that it is invariant for $\Phi^Y_t$ and sectionally expanded. Moreover, $E^Y$ varies upper semi-continuously with respect to $Y\in\cU$.  (Proposition \ref{prop.sect-exp})
  \item $C(\si_Y,Y)$ is the unique Lyapunov stable chain recurrence class contained in $U$. (Proposition \ref{prop.ls-class})
\end{enumerate}
By Lemma \ref{lem.hc}, there exists an open set $\cV\subset \cU$ such that for any $Y\in\cV$, the chain recurrence class $C(\si_Y,Y)$ exhibits a robust heterodimensional cycle. To finish the proof of Theorem A, it remains to show that for any $Y\in\cV$, the subbundle $E^Y$ is exceptional. Arguing by contradiction, if $E^Y$ is not exceptional, then there exists a dominated splitting $F=E^Y\oplus E'$ or $F=E'\oplus E^Y$. In the first case, since $E^Y$ is sectionally expanded and is dominated by $E'$, one deduces that $E'$ is expanded. It follows that $C(\si_Y,Y)$ is singular hyperbolic, contradicting to the fact that $C(\si_Y,Y)$ contains a robust heterodimensional cycle. In the second case, since $E'$ is dominated by $E^Y$ which contains the flow direction, one deduces that $E'$ should be contracted. Then $C(\si_Y,Y)$ would also be singular hyperbolic, a contradiction again.

This completes the proof of Theorem A.

\section{Away from tangencies: proof of Proposition \ref{prop.away-tangencies}}\label{sect.away-from-ht}

This section is devoted to the proof of Proposition \ref{prop.away-tangencies}. Recall that the statement of Proposition \ref{prop.away-tangencies} contains two parts:
\begin{enumerate}[label=(\alph*)]
  \item $\wt{\cN}_{\cF(\Lambda)}$ admits a dominated splitting $\cN^{ss}\oplus\cN^I\oplus\cN_2$ with one-dimensional subbundles;
  \item the dynamics $X|_{\Lambda}$ is locally away from homoclinic tangencies.
\end{enumerate}
Note first that part (b) follows from part (a) and the next lemma.
\begin{Lemma}[\cite{W02,Go}]\label{lem.away-tang}
	The dynamics $X|_{\Lambda}$ is locally away from homoclinic tangencies if and only if every $i$-fundamental sequence of $\Lambda$ admits an index $i$ dominated splitting.
\end{Lemma}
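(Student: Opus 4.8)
The plan is to prove Lemma~\ref{lem.away-tang} through the extended linear Poincaré flow, translating the dynamical obstruction (a tangency) and the linear obstruction (lack of domination) into statements about the normal bundle over orbits near $\La$, and to reduce the hard direction to the perturbation theory of \cite{W02,Go}. Throughout I unpack ``$X|_\La$ is locally away from homoclinic tangencies'' as: there are a neighborhood $U$ of $\La$ and a $C^1$ neighborhood $\cU$ of $X$ such that no $Y\in\cU$ has a homoclinic tangency whose orbit lies in $U$.

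For ($\Leftarrow$), assume that for each $i$ every $i$-fundamental sequence of $\La$ admits an index $i$ dominated splitting; by Lemma~\ref{lem.dom-equivalence} this is the same as $\cF(\La)$ carrying, for each relevant $i$, a $\tilde\psi^X_t$-dominated splitting of index $i$. Suppose for contradiction that $X|_\La$ is not locally away from homoclinic tangencies: there are $Y_n\to X$ and homoclinic tangencies of $Y_n$ at points $q_n$, associated to hyperbolic periodic orbits $\ga_n$ with $\ind(\ga_n)=i$ (after passing to a subsequence), with $\orb(q_n)\cup\ga_n$ contained in neighborhoods of $\La$ shrinking to $\La$. Since $q_n\in W^s(\ga_n)\cap W^u(\ga_n)$ one has $\phi^{Y_n}_t(q_n)\to\ga_n$ as $t\to\pm\infty$, so $\Ga_n:=\orb(q_n)\cup\ga_n$ is a compact chain transitive set consisting of regular points and lying close to $\La$. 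By Lemma~\ref{lem.semi-cont} and continuity/openness of domination (as in the proof of Lemma~\ref{lem.dom-equivalence}), for $n$ large $\wt\cN_{\cF(\Ga_n)}$ — hence, via the identification $\wt\cN_{B(\Ga_n)}\cong\cN_{\Ga_n}$ of Remark~\ref{rem.normal-bundle}, the normal bundle $\cN_{\Ga_n}$ itself — admits a $\psi^{Y_n}_t$-dominated splitting $\cN^{cs}_n\oplus\cN^{cu}_n$ with $\dim\cN^{cs}_n=i$. Over the periodic orbit $\ga_n$ this splitting must coincide with the hyperbolic splitting $\cN^s_n\oplus\cN^u_n$. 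Then domination forces the projection $P_s(q_n)\subset\cN_{q_n}$ of $T_{q_n}W^s(\ga_n)$ into $\cN^{cs}_n(q_n)$: a hypothetical component of $P_s(q_n)$ in $\cN^{cu}_n(q_n)$, pushed forward by $\psi^{Y_n}_t$, would make $P_s(\phi^{Y_n}_t q_n)$ align with $\cN^{cu}_n$ along $\ga_n$, contradicting $P_s(\phi^{Y_n}_t q_n)\to\cN^{cs}_n(\ga_n)$ together with transversality of $\cN^{cs}_n\oplus\cN^{cu}_n$; symmetrically $P_u(q_n)\subset\cN^{cu}_n(q_n)$ using $t\to-\infty$. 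Since $\dim T_{q_n}W^s(\ga_n)=i+1$, $\dim T_{q_n}W^u(\ga_n)=\dim\Om-i$, both contain $Y_n(q_n)$, and $\dim\cN^{cs}_n+\dim\cN^{cu}_n=\dim\cN_{q_n}$, we get $P_s(q_n)=\cN^{cs}_n(q_n)$, $P_u(q_n)=\cN^{cu}_n(q_n)$, hence $W^s(\ga_n)\pitchfork W^u(\ga_n)$ at $q_n$, contradicting the tangency.

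For ($\Rightarrow$) I argue the contrapositive: if some $i$-fundamental sequence $(\ga_n,X_n)$ of $\La$ does not admit an index $i$ dominated splitting, I produce vector fields $C^1$-close to $X$ with a homoclinic tangency arbitrarily near $\La$. Each $\ga_n$ is a hyperbolic periodic orbit of index $i$, so its normal bundle carries the hyperbolic splitting $\cN^s_n\oplus\cN^u_n$ of $\psi^{X_n}_t$ with $\dim\cN^s_n=i$, and the failure of a uniform index $i$ dominated splitting means the domination constant of $\cN^s_n\oplus\cN^u_n$ along $\ga_n$ is unbounded along a subsequence. I then invoke the Franks-type perturbation lemma for the linear Poincaré flow together with the invariant-manifold-preserving refinement of \cite{Go}: arbitrarily weak domination over the periodic orbit $\ga_n$ allows a $C^1$-small perturbation of $X_n$, supported in a small neighborhood of $\ga_n$ (hence keeping the field $C^1$-close to $X$ and the orbit near $\La$), after which the continuations of the stable and unstable directions of the normal bundle are non-transverse along the orbit and, moreover, this linear non-transversality is realized as a genuine nontransverse intersection of $W^s(\ga'_n)$ and $W^u(\ga'_n)$ at a point near $\ga_n$. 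This exhibits vector fields arbitrarily $C^1$-close to $X$ with a homoclinic tangency arbitrarily near $\La$, so $X|_\La$ is not locally away from homoclinic tangencies.

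The decisive step is the perturbation argument in ($\Rightarrow$): converting a quantitative failure of domination for the linear Poincaré cocycle into an actual $C^1$-small perturbation of the vector field that respects the time parametrization and is localized away from singularities, and then upgrading a non-transversality of the linearized normal stable/unstable distributions to a true tangency of the nonlinear invariant manifolds. This is precisely the content of \cite{W02} and of Gourmelon's Franks lemma preserving invariant manifolds \cite{Go}, which I would cite for these two inputs; the remaining work — the reduction to the extended linear Poincaré flow via Lemmas~\ref{lem.semi-cont} and \ref{lem.dom-equivalence}, the verification that homoclinic orbits shrinking to $\La$ genuinely form fundamental sequences, and the normal-bundle transversality bookkeeping in ($\Leftarrow$) — is routine.
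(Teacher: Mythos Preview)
The paper does not prove this lemma at all: it is stated as a known result from \cite{W02,Go} and simply invoked in the proof of Proposition~\ref{prop.away-tangencies}. Your sketch recovers the argument of those references and is essentially correct, so there is no real discrepancy of approach to discuss.

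One small wrinkle in your $(\Leftarrow)$ direction: you invoke Lemma~\ref{lem.dom-equivalence} to say that the hypothesis is equivalent to $\cF(\La)$ carrying an index~$i$ dominated splitting for each relevant~$i$. But Lemma~\ref{lem.dom-equivalence} requires \emph{every} fundamental sequence (of arbitrary index) to admit an index~$i$ dominated splitting, whereas the hypothesis of Lemma~\ref{lem.away-tang} only gives this for $i$-fundamental sequences. This detour through $\cF(\La)$ and Lemma~\ref{lem.semi-cont} is in fact unnecessary: once you have the tangency at $q_n$ associated to $\ga_n$ of index~$i$, the sequence $(\ga_n,Y_n)$ is itself an $i$-fundamental sequence of $\La$, so the hypothesis directly gives a uniform index~$i$ dominated splitting on $\cN_{\ga_n}$; this extends to $\Ga_n=\orb(q_n)\cup\ga_n$ since the orbit of $q_n$ is asymptotic to $\ga_n$ in both time directions, and your transversality bookkeeping then goes through verbatim. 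With this adjustment your argument is clean; the $(\Rightarrow)$ direction is exactly the content of \cite{W02,Go} as you say.
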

Part (a) holds if we show that $\cF(\Lambda)=B(\Lambda)$ and $\wt{\cN}_{B(\Lambda)}$ admits a dominated splitting with one dimensional subbundles. These two steps are given below in Lemma \ref{lem.f-seq} and Lemma \ref{lem.dominated-splitting}, respectively.
Note that these results are obtained for the unperturbed vector field $X$ whose dynamics in $\Omega=B^3\times I$ is a skew-product.

\begin{Lemma}\label{lem.f-seq}
$\cF(\Lambda)=B(\Lambda)$. 	
\end{Lemma}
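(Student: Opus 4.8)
The plan is to establish the two inclusions $B(\Lambda)\subseteq\cF(\Lambda)$ and $\cF(\Lambda)\subseteq B(\Lambda)$; the first is routine and the second carries the content.

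For $B(\Lambda)\subseteq\cF(\Lambda)$ I would use that, by (P7), the periodic orbits of $X^0$ are dense in $\Lambda_0$; since the slice $\{s=0\}$ is $\phi^X_t$-invariant with $X|_{\{s=0\}}=(X^0,0)$, crossing these orbits with $\{0\}$ produces periodic orbits of $X$ that are dense in $\Lambda$. Given a regular $x\in\Lambda$, choose periodic points $p_n\in\Lambda$ with $p_n\to x$; after passing to a subsequence the orbits $\orb(p_n)$ converge in the Hausdorff topology to a compact invariant subset of $\Lambda$ containing $x$, so $(\orb(p_n),X)\hookrightarrow(\Lambda,X)$ and $l^X_x=\lim_n l^X_{p_n}\in\cF(\Lambda)$. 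As $B(\Lambda)$ is the closure of $\{l^X_x:x\in\Lambda\setminus\Sing(X)\}$ and $\cF(\Lambda)$ is closed, this gives $B(\Lambda)\subseteq\cF(\Lambda)$.

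For $\cF(\Lambda)\subseteq B(\Lambda)$: since $\cF(\Gamma)\setminus B(\Gamma)$ always lies over $\Sing(X)$ and $\Sing(X)\cap\Lambda=\{\sigma\}$, it suffices to show $\cF(\Lambda)\cap T_\sigma\Omega\subseteq B(\Lambda)$. I would bound the left-hand side by applying Lemma \ref{lem.loren-like-singularity} to $\rho=\sigma$. Its hypotheses hold: the exponents at $\sigma$ are $\lambda^s,-\theta,\lambda^c,\lambda^u$ with $\lambda^s\le\lambda^s_0$, and \eqref{eq.eigenvalues} together with $\lambda^c+\lambda^u>0$ gives $\max\{\lambda^s,-\theta\}<\lambda^c<0<\lambda^u$, so $\sigma$ is a Lorenz-like singularity with $E^{ss}_\sigma=E^{ss}_O\oplus\RR^I$, $E^c_\sigma=E^c_O$, $E^u_\sigma=E^u_O$, and $E^{cu}_\sigma=E^c_O\oplus E^u_O\subset T_OB^3$; this is moreover the only admissible decomposition, since any $2$-plane containing $\RR^I$ inside a larger candidate for $E^{cu}_\sigma$ would have its area contracted at rate $-\theta+\lambda^c<0$, contradicting sectional expansion — this is precisely where the choice of $\theta$ in \eqref{eq.eigenvalues} is used. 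Also $C(\sigma,X)=\Lambda$ (the attractor containing $\sigma$), and $\Lambda\cap(W^{ss}(\sigma)\setminus\{\sigma\})=\emptyset$: a point $(w,s)$ of this intersection has $s=0$ and $w\in\Lambda_0$, and $\phi^X_t(w,0)=(\phi^{X^0}_t(w),0)\to\sigma$ at the strong-stable rate forces $w\in W^{ss}_{X^0}(O)\cap\Lambda_0=\{O\}$, by the standard property of the geometric Lorenz model. Hence Lemma \ref{lem.loren-like-singularity} yields $\cF(\Lambda)\cap T_\sigma\Omega\subseteq Gr(1,E^{cu}_\sigma)=Gr(1,E^{cu}_O)$.

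It remains to prove the matching inclusion $Gr(1,E^{cu}_O)\subseteq B(\Lambda)$, which I expect to be the main obstacle. In the linearizing coordinates at $O$ from (P1)--(P2), an orbit of $\Lambda_0$ re-entering a small neighbourhood of $O$ near $W^s_{loc}(O)$ has, while close to $O$, its $E^{ss}_O$-component negligible relative to its $E^{cu}_O$-component (because $\lambda^s<\lambda^c$), so $l^{X^0}$ along it stays close to $Gr(1,E^{cu}_O)$; and as the entry data range over $\Lambda_0\cap\Sigma_0$, which accumulates on $L_0$ from both sides, the resulting central--unstable directions are seen to sweep out all of $Gr(1,E^{cu}_O)$ in the limit. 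Thus every line in $E^{cu}_\sigma=E^{cu}_O$ is a limit of flow directions of regular points of $\Lambda$ tending to $\sigma$, i.e. it lies in $B(\Lambda)$, and together with the previous step this gives $\cF(\Lambda)\cap T_\sigma\Omega\subseteq B(\Lambda)$, hence $\cF(\Lambda)=B(\Lambda)$. The only genuinely delicate point is this last one — that the flow directions of the Lorenz attractor accumulating at its singularity exhaust the whole two-dimensional central--unstable subspace — which needs a careful, though elementary, analysis of the passage of orbits of $\Lambda_0$ near the Lorenz-like singularity; everything else is either routine or a direct application of the available results.
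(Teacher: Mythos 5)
Your proof follows the paper's route exactly: density of periodic orbits for $B(\Lambda)\subseteq\cF(\Lambda)$, the observation that $\cF(\Lambda)\setminus B(\Lambda)$ lies over $\sigma$, Lemma~\ref{lem.loren-like-singularity} to place $\cF(\Lambda)\cap T_\sigma\Omega$ inside $Gr(1,E^{cu}_\sigma)$, and finally the identity $B(\Lambda_0)\cap T_O B^3 = Gr(1,E^{cu}_O)$ for the geometric Lorenz attractor. The only distinction is that you correctly flag this last identity as the substantive step and sketch why the flow directions of $\Lambda_0$ accumulating at $O$ sweep out all of $E^{cu}_O$, whereas the paper simply asserts it as a known property of the geometric model.
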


\begin{proof}
	Since $\Lambda$ is a homoclinic class, periodic orbits are dense in $\Lambda$. This implies that $B(\Lambda)\subset \cF(\Lambda)$.
	We need to show that the converse also holds. By definition of the function $\cF(\cdot)$, for any $l\in \cF(\Lambda)$, we have $\beta(l)\in \Lambda$,  and if $\beta(l)=x$ is a regular point, then $l=l^X_x$.
	 Therefore, as $\sigma$ is the only singularity in $\Lambda$, the set $\cF(\Lambda)\setminus B(\Lambda)$ is contained in $T_{\sigma}\Omega$. At the singularity, there is a dominated splitting $T_{\sigma}\Omega= (E^{ss}_{\si}\oplus \RR^I)\oplus E^c_{\si}\oplus E^u_{\si}$, see \eqref{eq.eigenvalues}. Let $W^{ss}(\si)$ be the strong stable manifold of $\si$ that is tangent to $E^{ss}_{\si}\oplus\RR^I$ at $\si$.
By construction, $\Lambda\cap W^{ss}(\si)\setminus\{\si\}=\emptyset$. Thus it follows from Lemma \ref{lem.loren-like-singularity} that $\cF(\Lambda)\cap T_{\si}\Omega \subset E^c_{\si}\oplus E^u_{\si}$.
Note that for the 3-dimensional Lorenz attractor $\Lambda_0$, the intersection of $B(\Lambda_0)$ with $T_OB^3$ coincides with $E^{cu}_O=E^c_O\oplus E^u_O$. Identifying $O$ with $\sigma=(O,0)$ and $\Lambda_0$ with $\Lambda=\Lambda\times\{0\}$, we obtain that  $B(\Lambda)=B(\Lambda_0)=E^c_{\sigma}\oplus E^u_{\sigma}$.
Thus, $\cF(\Lambda)\cap T_{\si}\Omega \subset B(\Lambda)\cap T_{\si}\Omega$. Consequently, $\cF(\Lambda)\subset B(\Lambda)$. Therefore, $\cF(\Lambda)=B(\Lambda)$.
\end{proof}

\begin{Lemma}\label{lem.dominated-splitting}
	$\wt{\cN}_{B(\Lambda)}$ admits a dominated splitting $\cN^{ss}\oplus\cN^I\oplus\cN_2$ with one-dimensional subbundles.
\end{Lemma}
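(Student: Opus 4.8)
The plan is to build the splitting over $B(\Lambda)$ out of the corresponding object for the three-dimensional Lorenz attractor together with the fibre line field. On $\Lambda=\Lambda_0\times\{0\}$ the flow of $X$ is exactly the Lorenz flow $\phi^{X^0}_t$, since on $\{s=0\}$ the fibre component $-\theta s(1-\eta(x,s))$ together with all its derivatives vanishes; hence over $\Lambda$ the tangent flow $\Phi^X_t$ preserves the horizontal bundle $T_{\Lambda_0}B^3\times\{0\}$ (on which it equals $\Phi^{X^0}_t$) and the vertical line field $\RR^I$ (on which it acts by the fibre contraction $e^{-\Theta(t,x)}$, $\Theta(t,x)=\int_0^t\theta(1-\eta(\phi^{X^0}_r(x),0))\,dr\ge0$). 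Every direction of $B(\Lambda)$ is horizontal, being a limit of $l^X_{(x,0)}=\langle(X^0(x),0)\rangle$, so the horizontal inclusion identifies $(B(\Lambda),\hat\Phi^X_t)$ with $(B(\Lambda_0),\hat\Phi^{X^0}_t)$ and splits $\wt{\cN}_{B(\Lambda)}$ in a $\tilde\psi^X_t$-invariant way as $\wt{\cN}^{X^0}_{B(\Lambda_0)}\oplus\RR^I$, with $\tilde\psi^X_t$ acting on the first summand as $\tilde\psi^{X^0}_t$. Now $\Lambda_0$ is singular hyperbolic with splitting $E^{ss}\oplus E^{cu}$, this is $C^1$-robust, and all periodic orbits of $X^0$ in $\Lambda_0$ are uniformly hyperbolic of index $1$; hence every fundamental sequence of $\Lambda_0$ admits an index-$1$ dominated splitting of the linear Poincar\'e flow with a common constant, and by Lemma \ref{lem.dom-equivalence} the bundle $\wt{\cN}^{X^0}_{\cF(\Lambda_0)}$ carries a $\tilde\psi^{X^0}_t$-dominated splitting $\cN^{ss}_0\oplus\cN^{cu}_0$ with one-dimensional subbundles; since $B(\Lambda_0)\subseteq\cF(\Lambda_0)$ this restricts to $B(\Lambda_0)$. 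I set $\cN^{ss}:=\cN^{ss}_0$, $\cN^I:=\RR^I$ and $\cN_2:=\cN^{cu}_0$ (the first and third viewed as horizontal bundles over $B(\Lambda)$); these are pairwise orthogonal, $\tilde\psi^X_t$-invariant, one-dimensional line fields with $\cN^{ss}\oplus\cN^I\oplus\cN_2=\wt{\cN}_{B(\Lambda)}$. Since domination is transitive for one-dimensional subbundles, it suffices to prove $\cN^{ss}\prec\cN^I$ and $\cN^I\prec\cN_2$, where $E\prec F$ abbreviates ``$E$ is dominated by $F$''.

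The domination $\cN^{ss}\prec\cN^I$ is immediate. On $\Lambda$ the fibre contraction never beats rate $-\theta$, so $\|\tilde\psi^X_{-t}|_{\cN^I}\|\le e^{\theta t}$ for $t\ge0$; and $E^{ss}$ is uniformly transverse to $E^{cu}$ (hence to every direction of $B(\Lambda_0)$, all of which lie in the projectivisation of $E^{cu}$), so $\tilde\psi^{X^0}_t|_{\cN^{ss}_0}$ is conjugate with bounded distortion to $\Phi^{X^0}_t|_{E^{ss}}$ and $\|\tilde\psi_t|_{\cN^{ss}}\|\le Ce^{\lambda^s_0 t}$. Therefore $\|\tilde\psi_t|_{\cN^{ss}}\|\cdot\|\tilde\psi_{-t}|_{\cN^I}\|\le Ce^{(\lambda^s_0+\theta)t}\to0$, because $\lambda^s_0<-\theta$ by \eqref{eq.eigenvalues}.

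The domination $\cN^I\prec\cN_2$ is the heart of the proof, and is where the constant $\theta$ must be chosen carefully. Since $\|\tilde\psi_t|_{\cN^I}\|=e^{-\Theta(t,x)}$ with $\Theta(t,x)\le\theta t$, this domination follows once we find $\lambda>0$ with $\|\tilde\psi_t|_{\cN_2(l)}\|\ge c\,e^{(\lambda-\theta)t}$ for all $l\in B(\Lambda_0)$ and $t\ge0$; morally, wherever $\Theta$ fails to grow — namely near the periodic orbit $P_0$ — the bundle $\cN_2=\cN^{cu}_0$ must be expanding, while wherever $\cN_2$ contracts — which happens only near $O$ — the fibre contraction $\theta$ must be strong enough to beat it. I would cut $[0,t]$ into the parts inside a fixed linearising neighbourhood of $O$ and the parts outside. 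Outside, transition times between consecutive passages through the cross-section $\Sigma_0$ are bounded, so $\cN^{cu}_0$ expands over each return by the expansion estimate (P5) for $R_0$ and contracts at most by a bounded factor in between, while $\Theta$ grows at the full rate $\theta$ except near $P_0$, where instead $\cN^{cu}_0$ expands at the hyperbolic rate of $P_0$ — so the contribution of the outside parts to $e^{-\Theta(t,x)}/\|\tilde\psi_t|_{\cN_2}\|$ decays exponentially. Near $O$ one works in linearising coordinates: the orbit direction sweeps from near $\langle e^c_O\rangle$ to near $\langle e^u_O\rangle$, so $\|\tilde\psi_t|_{\cN^{cu}_0(x)}\|$ first expands at rate $\approx\lambda^u$ then contracts at rate $\approx\lambda^c$, and a determinant computation in $E^{cu}_O$ shows it stays above a uniform positive constant throughout a fixed linearising neighbourhood (here $\lambda^c+\lambda^u>0$ is used), while at a genuinely singular direction $l\subset E^{cu}_O$ one has $\|\tilde\psi_t|_{\cN_2(l)}\|=e^{(\lambda^c+\lambda^u)t-\log\|\Phi^{X^0}_t u\|}\ge e^{\lambda^c t}$ for a unit $u\in l$, i.e.\ $\cN_2$ contracts at most at rate $|\lambda^c|$ there; in all these near-$O$ situations $\eta\equiv0$ (since $\overline{V_P}$ misses $\sigma$), so $\Theta$ grows at the full rate $\theta$, and because $|\lambda^c|<\theta$ — again \eqref{eq.eigenvalues} — the near-$O$ contribution to $e^{-\Theta(t,x)}/\|\tilde\psi_t|_{\cN_2}\|$ decays at rate $\theta-|\lambda^c|>0$. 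Splicing the two regimes gives $\|\tilde\psi_t|_{\cN^I}\|\cdot\|\tilde\psi_{-t}|_{\cN_2}\|\le Ce^{-\lambda t}$, and together with $\cN^{ss}\prec\cN^I$ this yields the dominated splitting $\cN^{ss}\oplus\cN^I\oplus\cN_2$ of $\wt{\cN}_{B(\Lambda)}$. The main obstacle is precisely this near-singularity estimate: one must control the extended linear Poincar\'e flow on $\cN^{cu}_0$ along orbits that shadow $W^s_{loc}(O)$ for arbitrarily long times, where $\cN^{cu}_0$ can genuinely contract — at rate approaching $|\lambda^c|$ — and must still be dominated by the exponentially contracting fibre; this is exactly why the relations $\lambda^s_0<-\theta<\lambda^c$ and $\lambda^c+\lambda^u>0$ are built into the model.
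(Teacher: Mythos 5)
Your construction of the splitting is the same as the paper's, and the easy half $\cN^{ss}\prec\cN^I$ is done correctly, by the same uniform estimate $\lambda^s_0+\theta<0$. But for the hard half $\cN^I\prec\cN_2$ your route diverges from the paper's, and what you have there is a sketch with a real gap, not a proof.

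First, a local error: the displayed ``sufficient condition'' — that domination follows once $\|\tilde\psi_t|_{\cN_2(l)}\|\ge c\,e^{(\lambda-\theta)t}$ for all $l$ — is false as stated. Along $P_0$ the fibre rate is $\Theta\equiv 0$, so that bound alone gives $e^{-\Theta}/\|\tilde\psi_t|_{\cN_2}\|\le c^{-1}e^{(\theta-\lambda)t}$, which does not decay unless $\lambda>\theta$, a requirement your own near-$O$ analysis (where $\cN_2$ contracts at rate up to $|\lambda^c|$) shows is unattainable. Your subsequent ``moral'' argument quietly abandons this condition and instead balances \emph{where} $\Theta$ grows at rate $\theta$ against \emph{where} $\cN_2$ contracts. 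That balancing is the right picture — $\cN_2$ contracts only near $O$, where $\eta\equiv 0$ so $\Theta$ grows at the full rate $\theta>|\lambda^c|$; $\Theta$ stalls only near $P_0$, where $\cN_2$ expands — but turning it into a uniform estimate along arbitrary orbits (which may alternate between the two regimes, with arbitrarily long excursions near $W^s_{loc}(O)$) requires a genuine bookkeeping argument that you explicitly flag as ``the main obstacle'' and leave unwritten. So as it stands, the proposal does not establish $\cN^I\prec\cN_2$.

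The paper avoids this splicing entirely by invoking Ma\~n\'e's criterion (cited there as \cite{Ma2}): a continuous invariant splitting into one-dimensional subbundles is dominated once the corresponding Lyapunov exponents satisfy the strict inequality $\eta^{ss}<\eta^I<\eta_2$ for \emph{every} ergodic invariant measure on $B(\Lambda)$. This converts the uniform, orbit-by-orbit problem into a pointwise measure-theoretic check. For measures at $\sigma$ one reads off $\eta_2\in\{\lambda^c,\lambda^u\}$ and $\eta^I=-\theta$; for nonsingular measures $\eta_2>0$ by sectional expansion and $-\theta\le\eta^I\le 0$ from the construction of $\zeta$; both cases are immediate from \eqref{eq.eigenvalues} and $\lambda^s_0<-\theta$. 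There is no need to linearise near $O$, track orbit excursions, or splice regimes. I'd recommend you adopt this tool: it is exactly designed to replace the uniform estimate you were attempting, and it makes the delicate choice of $\theta$ — which you correctly identified as the crux — manifest in two lines.
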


\begin{proof}
	For the Lorenz attractor $\Lambda_0$ in $B^3$, there exists a singular hyperbolic splitting $T_{\Lambda_0}B^3=E^{ss}\oplus E^{cu}$. For any point $x\in\Lambda_0\setminus\{O\}$, one has $l^{X^0}_x\subset E^{cu}_x$, see (P5). In particular, the domination gives a lower bound of the angle between $E^{ss}_x$ and $l^{X^0}_x$, which is uniform for $x\in\Lambda_0\setminus\{O\}$. Let $\cN^{ss}_x$ be the orthogonal projection of $E^{ss}_x$ to $\cN_x$, and let $\cN_2(x)=E^{cu}_x\cap\cN_x$. The domination property of $T_{\Lambda_0}B^3=E^{ss}\oplus E^{cu}$ and the uniform lower bound of the angle between $E^{ss}_x$ and $l^{X^0}_x$ imply that the bundle $\cN^{ss}$ is also dominated by $\cN_2$ for the linear Poincar\'e flow (\cite[Lemma 2.3]{BGY}). Moreover, with the contracting property of $\Phi^{X_0}_t|_{E^{ss}}$ one can show that the bundle $\cN^{ss}$ is also contracted by the linear Poncar\'e flow. Taking limits of the splitting $\cN_{\Lambda_0}=\cN^{ss}\oplus \cN_2$ in the Grassmannian $G^1$, one obtains a dominated splitting $\wt{\cN}_{B(\Lambda_0)}=\cN^{ss}\oplus \cN_2$. Moreover, the subbundle $\cN^{ss}$ is contracting for the extended linear Poincar\'e flow.
	
	Note that the dynamics in $\Omega=B^3\times I$ is a skew-product and $\Lambda=\Lambda_0\times\{0\}$ is an invariant set for $X$. We may assume that the one-dimensional subbundle $\RR^I$ tangent to the fiber $I$ is orthogonal to $T(B^3\times\{0\})$. Identifying $B^3$ with $B^3\times\{0\}$, one can see that $\wt{\cN}_{B(\Lambda)}$ admits an invariant splitting $\cN^{ss}\oplus \cN^I\oplus \cN_2$, where $\cN^{ss}$ and $\cN_2$ are given by the splitting $\wt{\cN}(B(\Lambda_0))=\cN^{ss}\oplus \cN_2$ and $\cN^I=\RR^I$. We need to prove that this splitting is dominated. Following \cite{Ma2}, it suffices to show that for any ergodic invariant measure supported on $B(\Lambda)$, the Lyapunov exponents corresponding to the three subbundles satisfy
	\begin{equation}\label{eq.exponents}
		\eta^{ss}<\eta^{I}<\eta_2.
	\end{equation}
	
	Recall from the construction that there is a dominated splitting $T_{\si}\Omega=E^{ss}\oplus \RR^I\oplus E^c\oplus E^u$, such that the corresponding Lyapunov exponents of the subbundles are
\[\la^s<-\theta<\la^c<\la^u.\]
Moreover, we have defined $\la^s_0=\log\|D\Phi_1^{X^0}|_{E^{ss}_{\Lambda_0}}\|$ which satisfies $\la^s\leq\la^s_0<-\theta$. See Section \ref{sect.skew-product}.

Let $\mu$ be any ergodic invariant measure supported on $B(\Lambda)$. Suppose $\beta_*\mu$ is the Dirac measure at $\sigma$, then $\mu$ is the Dirac measure supported either on $\langle E^c_{\sigma}\rangle$ or on $\langle E^u_{\sigma}\rangle$. In both cases, we have
	\[\eta^{ss}=\lambda^s < -\theta=\eta^{I}.\]
	Then we show that $\eta^{I}<\eta_2$ as follows: if $\mu$ is the Dirac measure on $\langle E^c_{\sigma}\rangle$, then $\eta_2=\lambda^u$; otherwise, $\mu$ is the Dirac measure on $\langle E^u_{\sigma}\rangle$, then $\eta_2=\lambda^c$. In both cases, it follows from equation \eqref{eq.eigenvalues} that
	\[\eta^{I}=-\theta<\lambda^c\leq \eta_2.\]
	Hence \eqref{eq.exponents} holds when $\beta_*\mu$ is the Dirac measure at $\sigma$.

	Now, suppose $\beta_*\mu$ is not the Dirac measure at $\sigma$, then it is a nonsingular ergodic measure for the flow $\phi_t^X$. Since the bundle $\cN^{ss}$ is obtained from $E^{ss}$ by orthogonal projection, the Lyapunov exponent along $\cN^{ss}$ is no larger than $\log\|D\Phi_1^{X^0}|_{E^{ss}_{\Lambda_0}}\|=\lambda^s_0$, i.e.
	\[\eta^{ss}\leq \lambda^s_0.\]
By construction of the example, on every point $(x,0)\in \Lambda$, it holds
\[-\theta\leq \frac{\partial \zeta(x,s)}{\partial s}\bigg|_{s=0}\leq 0,\]
where $\zeta(x,s)=-\theta s(1-\eta(x,s))$ is the last component of $X(x,s)$.
This implies that $-\theta\leq \eta^{I}\leq 0$. Then by the inequality \eqref{eq.eigenvalues}, one has
\[\eta^{ss}\leq \lambda^s_0<-\theta\leq \eta^I\leq 0.\]
Since $E^{cu}$ is sectionally expanding and $\cN_2$ is obtained from $E^{cu}$ by intersecting with the normal bundle, one deduces that the Lyapunov exponent along $\cN_2$ is larger than zero,  i.e. $\eta_2>0$. Therefore, \eqref{eq.exponents} also holds in this case.
	This finishes the proof of the lemma.
\end{proof}

\begin{Remark}\label{rmk.exceptional-bundle}
  From the proof, we can see that $E^{cu}_{\Lambda}=\beta_*(\cF(\Lambda)\oplus \cN_2)$, where the map $\beta_*$ takes each vector $(l,v)\in\beta^*(TM)$ to $v\in TM$.
\end{Remark}

\begin{proof}[Proof of Proposition \ref{prop.away-tangencies}]
  By Lemma \ref{lem.f-seq} and Lemma \ref{lem.dominated-splitting}, the extended normal bundle $\wt{\cN}_{\cF(\Lambda)}$ admits a dominated splitting with 1-dimensional subbundles. Then it follows from Lemma \ref{lem.dom-equivalence} that any fundamental sequence of $\Lambda$ admits an index $i$ dominated splitting, for $i=1$ and $i=2$. Thus Lemma \ref{lem.away-tang} shows that $X|_{\Lambda}$ is locally away from homoclinic tangencies.
\end{proof}

\section{Existence of sectionally expanding subbundle: proof of
Proposition \ref{prop.sect-exp}} \label{sect.sectional-expanding}
By construction, the chain recurrence class $\Lambda$ admits a 2-dimensional invariant bundle $E^{cu}\subset T_{\Lambda}\Omega$ which is sectionally expanding for $\Phi^X_t$. Proposition \ref{prop.sect-exp} asserts that this property also holds for nearby chain recurrence classes for $C^1$ perturbations.

This section gives a proof of Proposition \ref{prop.sect-exp}.
For this purpose, we turn to the dominated splitting $\wt{\cN}_{\cF(\Lambda)}=\cN_1\oplus \cN_2$ given by Proposition \ref{prop.away-tangencies}, where $\cN_1=\cN^{ss}\oplus\cN^I$.
As in Remark \ref{rmk.exceptional-bundle}, we have $E^{cu}_{\Lambda}=\beta_*(\cF(\Lambda)\oplus \cN_2)$, which is a two-dimensional sectionally expanding subbundle for the tangent flow.

\begin{Definition}
	Let $\cE\subset \beta^*(TM)$ be any invariant bundle for the extended tangent flow $\tilde{\Phi}_t$. We say that the bundle $\cE$ {\em projects to a continuous bundle $E\subset TM$} if $E$ is continuous and for any $l\in \iota(\cE)$, it holds $\beta_*(\cE_l)=E_{\beta(l)}$.
\end{Definition}

\begin{Remark}
    If $\cE$ is continuous and it satisfies $\beta_*(\cE_{l_1})=\beta_*(\cE_{l_2})$ for any $l_1,l_2\in \iota(\cE)$ with $\beta(l_1)=\beta(l_2)$, then $E=\beta_*(\cE)$ is a well-defined continuous subbundle of $TM$. Moreover, it is easy to see that $E$ is invariant for the tangent flow $\Phi_t$ if $\cE$ is invariant for the extended tangent flow $\tilde{\Phi}_t$.
\end{Remark}

Proposition \ref{prop.sect-exp} will be proved as a corollary of the following result, which is stated in a slightly more general setting.

\begin{Proposition}\label{prop.sectional-expanding}
Let $C_X$ be a chain recurrence class of a vector field $X\in\xX^1(M)$ such that every singularity in the class, say $\rho$, is Lorenz-like with stable index $i_0$ and $C_X\cap(W^{ss}(\rho,X)\setminus\{\rho\})=\emptyset$. Suppose $\wt{\cN}_{\cF(C_X)}$ admits a dominated splitting $\cN_1\oplus \cN_2$ such that $\dim \cN_1=i_0-1$.
Then there is a neighborhood $U$ of $C_X$ and a neighborhood $\cU$ of $X$ such that for any $Y\in\cU$ and any chain recurrence class $C_{Y}\subset U$ of $Y$, the class admits a dominated splitting $\wt{\cN}_{\cF(C_Y)}=\cN^Y_1\oplus\cN^Y_2$, such that $\cF(C_Y)\oplus \cN^Y_2$ projects to a continuous bundle $E^Y\subset T_{C_Y}M$ with $\dim E^Y=\dim M-i_0+1$.
\end{Proposition}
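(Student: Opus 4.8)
The plan is to build $E^Y$ by adjoining the flow direction to the robust dominated splitting $\wt{\cN}_{\cF(C_Y)}=\cN_1^Y\oplus\cN_2^Y$ and to check that the resulting subbundle of $T_{C_Y}M$ is well defined, of the right dimension, and continuous; all the work sits at the singularities. \emph{Step 1 (robustness and set-up).} Since $\wt{\cN}_{\cF(C_X)}$ carries a dominated splitting of index $i_0-1$, this splitting persists over every $\hat\Phi_t$-invariant compact subset of a suitable neighbourhood $\cB\subset G^1$ of $\cF(C_X)$ (domination is $C^1$-open and detected on finite time; cf.\ Lemma~\ref{lem.dom-equivalence} and the discussion preceding it). By Lemma~\ref{lem.semi-cont} we may then choose a neighbourhood $U$ of $C_X$ and a neighbourhood $\cU$ of $X$ so that $\cF(C_Y)\subset\cB$ for every $Y\in\cU$ and every chain recurrence class $C_Y\subset U$; hence $\wt{\cN}_{\cF(C_Y)}=\cN_1^Y\oplus\cN_2^Y$ is dominated with $\dim\cN_1^Y=i_0-1$. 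Shrinking $U$ so it meets no singularity of $X$ outside $C_X$, and shrinking $\cU$ by applying Lemma~\ref{lem.loren-like-singularity} to each of the finitely many singularities $\rho\in C_X$, we may further assume that every singularity in $U$ is the continuation $\rho_Y$ of such a $\rho$, that $\rho_Y$ is Lorenz-like of stable index $i_0$, that $C_Y\cap(W^{ss}(\rho_Y,Y)\setminus\{\rho_Y\})=\emptyset$, and that $\cF(C_Y)\cap T_{\rho_Y}M\subset E^{cu}_{\rho_Y}$ whenever $\rho_Y\in C_Y$ (in which case $C_Y=C(\rho_Y,Y)$).

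\emph{Step 2 (definition of $E^Y$ and the computation at a singularity).} Put $\cE=\cF(C_Y)\oplus\cN_2^Y$ and, for $l\in\cF(C_Y)$, set $E^Y_{\beta(l)}:=\beta_*(\cE_l)=l\oplus\cN_2^Y(l)$. Since $\dim\cN_2^Y=(\dim M-1)-(i_0-1)=\dim M-i_0$, the candidate fibre has dimension $\dim M-i_0+1$. At a regular point $x\in C_Y$ one has $l^Y_x\in\cF(C_Y)$ (a chain transitive set is a Hausdorff limit of periodic orbits of $C^1$-close vector fields, through any of its points), and by the remark after the definition of $B(\cdot)$ it is the \emph{only} element of $\cF(C_Y)\cap\beta^{-1}(x)$; thus $E^Y_x=l^Y_x\oplus\cN_2^Y(l^Y_x)$ is unambiguous and varies continuously with $x$. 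The substance is to handle the singular fibres. Fix $\rho_Y\in C_Y$ and any line $l\subset E^{cu}_{\rho_Y}$; since $E^{ss}_{\rho_Y}\cap E^{cu}_{\rho_Y}=\{0\}$, orthogonal projection carries $E^{ss}_{\rho_Y}$ isomorphically onto an $(i_0-1)$-plane $\cN_1'(l)\subset\wt{\cN}_l$, while $\cN_2'(l):=E^{cu}_{\rho_Y}\cap l^\perp$ has dimension $\dim M-i_0$ and $\wt{\cN}_l=\cN_1'(l)\oplus\cN_2'(l)$. This splitting is $\tilde\psi^Y_t$-invariant (both factors involve only the fixed subspaces $E^{ss}_{\rho_Y}$, $E^{cu}_{\rho_Y}$), and since $E^{ss}_{\rho_Y}$ is dominated by $E^{cu}_{\rho_Y}$ for $\Phi^Y_t$ with angle bounded below (trivially, over the single point $\rho_Y$), the argument of \cite[Lemma~2.3]{BGY} used in the proof of Lemma~\ref{lem.dominated-splitting} shows $\cN_1'\oplus\cN_2'$ is dominated of index $i_0-1$ over the compact $\hat\Phi_t$-invariant set $Gr(1,E^{cu}_{\rho_Y})$. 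By uniqueness of dominated splittings, $\cN_2^Y(l)=\cN_2'(l)=E^{cu}_{\rho_Y}\cap l^\perp$ for every $l\in\cF(C_Y)\cap T_{\rho_Y}M$, whence
\[\beta_*(\cE_l)=l\oplus\bigl(E^{cu}_{\rho_Y}\cap l^\perp\bigr)=E^{cu}_{\rho_Y},\]
independently of $l$. So $E^Y$ is well defined over $\rho_Y$ with value $E^{cu}_{\rho_Y}$, of the required dimension $\dim M-i_0+1$.

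\emph{Step 3 (continuity, invariance, and the application to Proposition~\ref{prop.sect-exp}).} If $x_n\to\rho_Y$ with $x_n\in C_Y$ regular, then after passing to a subsequence $l^Y_{x_n}\to l^\ast\in\cF(C_Y)\cap T_{\rho_Y}M\subset E^{cu}_{\rho_Y}$ by Step~1, so $E^Y_{x_n}=l^Y_{x_n}\oplus\cN_2^Y(l^Y_{x_n})\to l^\ast\oplus\cN_2^Y(l^\ast)=E^{cu}_{\rho_Y}=E^Y_{\rho_Y}$ by continuity of $\cN_2^Y$ on $\cB$ together with Step~2; hence $E^Y$ is continuous on $C_Y$. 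Invariance of $E^Y$ under $\Phi^Y_t$ follows from $\tilde\Phi_t$-invariance of $\cE$: for $l\in\cF(C_Y)$ and $u\in l$ a unit vector, $\Phi^Y_t\bigl(l\oplus\cN_2^Y(l)\bigr)=\langle\Phi^Y_t u\rangle\oplus\tilde\psi^Y_t\bigl(\cN_2^Y(l)\bigr)=\hat\Phi^Y_t(l)\oplus\cN_2^Y\bigl(\hat\Phi^Y_t l\bigr)$, since restoring the $\langle\Phi^Y_t u\rangle$-direction undoes the projection defining $\tilde\psi^Y_t$. Finally, Proposition~\ref{prop.sect-exp} is the case $C_X=\Lambda$: $\sigma$ is Lorenz-like of stable index $i_0=3$ with $\Lambda\cap(W^{ss}(\sigma)\setminus\{\sigma\})=\emptyset$, and by Proposition~\ref{prop.away-tangencies} the bundle $\wt{\cN}_{\cF(\Lambda)}$ admits the dominated splitting $(\cN^{ss}\oplus\cN^I)\oplus\cN_2$ with $\dim(\cN^{ss}\oplus\cN^I)=2=i_0-1$; the resulting $E^Y$ is then $2$-dimensional, contains the flow direction, and varies upper semicontinuously for $C_Y=C(\sigma_Y,Y)$ by upper semicontinuity of $\cF$ (Lemma~\ref{lem.semi-cont}) and continuity of $\cN_2^Y$.

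\emph{Main obstacle.} The crux is Step~2 together with the continuity claim at $\rho_Y$: one must know that every accumulation direction of regular flow directions converging to $\rho_Y$ stays inside $E^{cu}_{\rho_Y}$ — this is exactly where the hypothesis $C_Y\cap(W^{ss}(\rho_Y,Y)\setminus\{\rho_Y\})=\emptyset$ enters, through Lemma~\ref{lem.loren-like-singularity} — and that over every such direction $l$ the center-unstable factor of the \emph{a priori} $l$-dependent dominated splitting of $\wt{\cN}_l$ is forced by uniqueness to be $E^{cu}_{\rho_Y}\cap l^\perp$, so that adjoining $l$ always recovers the same plane $E^{cu}_{\rho_Y}$. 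Persistence of the dominated splitting, the dimension bookkeeping, and continuity on the regular part are routine.
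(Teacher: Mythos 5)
Your proof is correct and follows essentially the same route as the paper: robustness of the dominated splitting over a Grassmannian neighbourhood of $\cF(C_X)$ via Lemma~\ref{lem.semi-cont}, Lemma~\ref{lem.loren-like-singularity} to confine singular directions to $E^{cu}_{\rho_Y}$, uniqueness of dominated splittings to identify $\cN_2^Y(l)$ with $E^{cu}_{\rho_Y}\cap l^\perp$ so that $\beta_*(l\oplus\cN_2^Y(l))=E^{cu}_{\rho_Y}$ independently of $l$, and then the sequential continuity argument at singularities. Your Step~2 merely spells out the auxiliary splitting over $Gr(1,E^{cu}_{\rho_Y})$ that the paper invokes implicitly when it appeals to uniqueness, and your explicit check of $\Phi^Y_t$-invariance is a welcome (if routine) addition the paper leaves to the reader.
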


\begin{proof}	
	By robustness of dominated splitting, there exist a neighborhood $\cB$ of $\cF(C_X)$ and a neighborhood $\cU$ of $X$, such that for any $Y\in\cU$ and for any invariant set $\Delta\subset \cB$ of the extended tangent flow of $Y$, the normal bundle $\wt{\cN}(\Delta)$ admits a dominated splitting $\cN^Y_1\oplus \cN^Y_2$ with respect to the extend linear Poincar\'e flow $\tilde{\psi}_t^{Y}$, and it satisfies $\dim \cN^Y_1=i_0-1$.
	By Lemma \ref{lem.semi-cont}, there exists a neighborhood $U$ of $C_X$ such that for any $Y\in\cU$ (shrinking $\cU$ if necessary), for any chain recurrence class $C_{Y}\subset U$, one has $\cF(C_{Y})\subset\cB$. Hence there is a dominated splitting $\wt{\cN}_{\cF(C_{Y})}=\cN^Y_1\oplus \cN^Y_2$ with index $i_0-1$, i.e. $\dim \cN^Y_1=i_0-1$.
	
Since the singularities in $C_X$ are all Lorenz-like, by reducing $U$ and $\cU$, we can assume that singularities in $C_Y$ are also Lorenz-like. In particular, for any singularity $\rho\in C_Y$, there is a dominated splitting $T_{\rho}M=E^{ss}_{\rho}\oplus E^{cu}_{\rho}$ such that $\dim E^{ss}_{\rho}=i_0-1$. Moreover, by Lemma \ref{lem.loren-like-singularity} we can assume that for any $l\in \cF(C_Y)$, if $\beta(l)=\rho$ is a singularity, then $l\in E^{cu}_{\rho}$. As the dominated splitting $\wt{\cN}_{\cF(C_{Y})}=\cN^Y_1\oplus \cN^Y_2$ has index $i_0-1$, which equals to $\dim E^{ss}_{\rho}$, the uniqueness of domination implies that $\cN^Y_2(l)\subset E^{cu}_{\rho}$ and hence $\beta_*(l\oplus \cN^Y_2(l))=E^{cu}_{\rho}$.
Note that the subspace $E^{cu}_{\rho}$ is unique for all $l\in\cF(C_Y)\cap\beta^{-1}(\rho)$. For any $l\in\cF(C_Y)$ with $\beta(l)=x$ a regular point, we have $l=l^Y_x$. Thus, we can define $E^{cu}_x=\beta_*(l\oplus\cN_2(l))$. In this way, we define a bundle $E^{cu}$ over $C_Y$. By continuity of the map $\beta_*$ and the bundle $\cN^Y_2$, one can see that the bundle $E^{cu}$ is continuous at regular points. For any singularity $\rho\in C_Y$, let $x_n$ be a sequence in $C_Y$ such that $x_n\to\rho$ and $l_n=l^Y_{x_n}\to l$, we have
\[E^{cu}_{x_n}=\beta_*(l_n\oplus \cN_2^Y(l_n))\to \beta_*(l\oplus\cN_2^Y(l))=E^{cu}_{\rho}.\]
Hence $E^{cu}$ is also continuous at $\rho$. Therefore $\cF(C_Y)\oplus \cN^Y_2$ projects to the continuous bundle $E^{cu}$ with $\dim E^{cu}=\dim M-i_0+1$. 	
\end{proof}

\begin{proof}[Proof of Proposition \ref{prop.sect-exp}]
	Let $\wt{\cN}_{\cF(\Lambda)}=\cN_1\oplus \cN_2$ with $\dim \cN_2=1$ be the dominated splitting given by Proposition \ref{prop.away-tangencies}. Then $\cF(\Lambda)\oplus\cN_2$ projects to the two-dimensional continuous bundle $E^{cu}_{\Lambda}$, which is sectionally expanded. The neighborhoods $U\subset U_{\Lambda}$ of $\Lambda$ and $\cU$ of $X$ are given by Proposition \ref{prop.sectional-expanding}. By reducing these neighborhoods, we can assume that for any $Y\in\cU$ and any chain recurrence class $C_Y\subset U$, the splitting $\wt{\cN}_{\cF(C_Y)}=\cN^Y_1\oplus \cN^Y_2$ is close enough to $\wt{\cN}_{\cF(\Lambda)}=\cN_1\oplus \cN_2$. This implies that $\cF(C_Y)\oplus \cN^Y_2$ projects to a continuous bundle $E^Y$ that is close enough to $E^{cu}_{\Lambda}$, hence is also sectionally expanded. Moreover, as $E^Y$ contains $\beta_*(\cF(C_Y))$, it contains $Y(x)$ for all $x\in C_Y$.

Finally, if $C_Y=C(\si_Y,Y)$, then $C_Y$ varies upper semi-continuously with respect to $Y$. By Lemma \ref{lem.semi-cont}, $\cF(C_Y)$ also varies upper semi-continuously with respect to $Y$. Note that $E^Y=\beta_*(\cF(C_Y)\oplus \cN_2^Y)$. As the dominated splitting $\wt{\cN}_{F(C_Y)}=\cN^Y_1\oplus \cN^Y_2$ varies continuously with respect to $Y$ and the projection $\beta_*$ is continuous, one conclude that $E^Y$ varies upper semi-continuously with respect to $Y$.
\end{proof}

In the case $C_Y$ is non-singular, the sectional expanding property of the $E^Y$ bundle implies that the normal bundle $\cN^Y_2$ is expanded. This also holds for any non-singular compact invariant set in $U$.
\begin{Corollary}\label{cor.sect-exp}
  Let the neighborhoods $U$ and $\cU$ be given by Proposition \ref{prop.sect-exp}. For any $Y\in\cU$ and any compact $\phi^Y_t$-invariant set $\Lambda_Y\subset U$ such that $\Lambda_Y\cap\Sing(Y)=\emptyset$, the normal bundle $\cN_{\Lambda_Y}$ admits a dominated splitting $\cN_1\oplus \cN_2$ such that $\cN_2$ is one-dimensional and expanded.
\end{Corollary}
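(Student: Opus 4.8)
The plan is to deduce the corollary from the robust structures already produced in the proofs of Propositions~\ref{prop.away-tangencies} and \ref{prop.sectional-expanding}, by restricting them to the lift of $\Lambda_Y$ inside $G^1$ and then performing a short determinant computation. Recall from the proof of Proposition~\ref{prop.sectional-expanding} (applied to $\Lambda$, for which the relevant splitting satisfies $\dim\cN_2=1$) that there is an open neighborhood $\cB$ of $\cF(\Lambda)$ in $G^1$ such that, for the neighborhoods $U$ and $\cU$ of Proposition~\ref{prop.sect-exp} (reduced further if need be, which is harmless), every $Y\in\cU$ and every compact $\hat{\Phi}^Y_t$-invariant set $\Delta\subset\cB$ have the property that $\wt{\cN}_{\Delta}$ carries a dominated splitting $\cN^Y_1\oplus\cN^Y_2$ with $\dim\cN^Y_2=1$, with uniform constants and $C^0$-close to the splitting over $\cF(\Lambda)$. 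Given $Y\in\cU$ and a compact $\phi^Y_t$-invariant set $\Lambda_Y\subset U$ disjoint from $\Sing(Y)$, put $\hat{\Lambda}_Y=\{\,l^Y_x:x\in\Lambda_Y\,\}\subset G^1$; it is compact and $\hat{\Phi}^Y_t$-invariant, and $\wt{\cN}_{\hat{\Lambda}_Y}$ is naturally identified with $\cN_{\Lambda_Y}$, intertwining $\tilde{\psi}^Y_t$ with $\psi^Y_t$ (Remark~\ref{rem.normal-bundle}). Everything thus reduces to the inclusion $\hat{\Lambda}_Y\subset\cB$.

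Granting $\hat{\Lambda}_Y\subset\cB$, the dominated splitting $\wt{\cN}_{\hat{\Lambda}_Y}=\cN^Y_1\oplus\cN^Y_2$ transfers to a dominated splitting $\cN_{\Lambda_Y}=\cN_1\oplus\cN_2$ with $\dim\cN_2=1$. As in the proof of Proposition~\ref{prop.sectional-expanding}, $E^Y:=\beta_*(\hat{\Lambda}_Y\oplus\cN_2)$ is a continuous, $\Phi^Y_t$-invariant $2$-dimensional subbundle of $T_{\Lambda_Y}M$ containing the flow direction; since $l^Y_x=\langle Y(x)\rangle$ on $\Lambda_Y$, in fact $E^Y(x)=\langle Y(x)\rangle\oplus\cN_2(x)$, an \emph{orthogonal} direct sum. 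For $\cB$ and $\cU$ small enough, $E^Y$ is uniformly close to the sectionally expanded bundle $E^{cu}_{\Lambda}$, hence is itself sectionally expanded (a $C^0$-small invariant perturbation of a sectionally expanded bundle is again sectionally expanded, with slightly weaker but uniform constants). Finally, for a unit vector $n$ spanning $\cN_2(x)$, since $\Phi^Y_{-t}$ sends $Y(x)$ to $Y(\phi^Y_{-t}(x))$ and sends $n$ to a vector whose orthogonal projection to $\cN_{\phi^Y_{-t}(x)}$ is $\psi^Y_{-t}n$, expanding the corresponding $2$-vector gives
\[
\det\big(\Phi^Y_{-t}|_{E^Y(x)}\big)=\frac{\|Y(\phi^Y_{-t}(x))\|}{\|Y(x)\|}\cdot\|\psi^Y_{-t}|_{\cN_2(x)}\|,\qquad t\ge 0.
\]
As $\Lambda_Y$ is compact and disjoint from $\Sing(Y)$, the norm $\|Y(\cdot)\|$ is bounded away from $0$ and $\infty$ on $\Lambda_Y$, so the sectional expansion of $E^Y$ forces $\|\psi^Y_{-t}|_{\cN_2(x)}\|\le C\e^{-\lambda t}$ for all $t\ge 0$, with uniform $C,\lambda$; that is, $\cN_2$ is expanded by the linear Poincar\'e flow, as claimed.

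It remains to prove $\hat{\Lambda}_Y\subset\cB$, which I expect to be the main obstacle and which rests on the Lorenz-like structure at $\sigma$. First, $\Lambda_Y$ being compact, invariant and disjoint from $\Sing(Y)$ forces $\sigma_Y\notin\Lambda_Y$, hence $\Lambda_Y\cap W^{ss}(\sigma_Y,Y)=\emptyset$ (a point of that intersection would have forward orbit converging to $\sigma_Y\in\overline{\Lambda_Y}=\Lambda_Y$). Suppose the inclusion fails; then there are $Y_n\to X$ in $\xX^1(M)$, compact $\phi^{Y_n}_t$-invariant sets $\Lambda_n\subset U_n$ disjoint from $\Sing(Y_n)$ with $U_n\downarrow\Lambda$, and $x_n\in\Lambda_n$ with $l^{Y_n}_{x_n}\notin\cB$. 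Along a subsequence $x_n\to x\in\Lambda$ and $l^{Y_n}_{x_n}\to l\in G^1\setminus\cB$, so $l\notin\cF(\Lambda)$. If $x$ is a regular point of $X$ then $l^{Y_n}_{x_n}\to l^X_x\in B(\Lambda)=\cF(\Lambda)$ (Lemma~\ref{lem.f-seq}), a contradiction; hence $x=\sigma$. Since $l$ lies over $\sigma$, $l\notin\cF(\Lambda)$, and $\cF(\Lambda)\cap T_{\sigma}\Omega=E^c_{\sigma}\oplus E^u_{\sigma}$ while $T_{\sigma}\Omega=(E^{ss}_{\sigma}\oplus\RR^I)\oplus E^c_{\sigma}\oplus E^u_{\sigma}$ (Lemma~\ref{lem.f-seq}), the line $l$ is not contained in $E^c_{\sigma}\oplus E^u_{\sigma}$. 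Now $x_n$ lies on neither $W^s(\sigma_{Y_n},Y_n)$ nor $W^u(\sigma_{Y_n},Y_n)$ (otherwise $\sigma_{Y_n}\in\Lambda_n$), so the $Y_n$-orbit of $x_n$ crosses a fixed small neighborhood $W$ of $\sigma$, staying in $W$ for a time $T_n\to\infty$ as $x_n\to\sigma$. Along this crossing the domination at $\sigma_{Y_n}$ drives the direction $\hat{\Phi}^{Y_n}_{-t}(l^{Y_n}_{x_n})$ towards $E^{ss}_{\sigma_{Y_n}}\oplus\RR^I$ and, correspondingly, drives the base point towards the local strong stable manifold $W^{ss}_{loc}(\sigma_{Y_n})$ (tangent to $E^{ss}_{\sigma_{Y_n}}\oplus\RR^I$); hence the backward exit point $q_n\in\Lambda_n$ of $x_n$ from $W$ satisfies $d(q_n,W^{ss}_{loc}(\sigma_{Y_n}))\to 0$, and a subsequential limit $q=\lim q_n$ lies in $\Lambda\cap(W^{ss}_{loc}(\sigma)\setminus\{\sigma\})$, contradicting $\Lambda\cap(W^{ss}(\sigma)\setminus\{\sigma\})=\emptyset$. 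This is the scheme of \cite[Lemma~4.4]{LGW} (compare the proof of Lemma~\ref{lem.loren-like-singularity}); assembling the quantifiers yields $\hat{\Lambda}_Y\subset\cB$ and finishes the proof.
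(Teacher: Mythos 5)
Your reconstruction is correct and fills in precisely what the paper leaves implicit. Since the paper states this corollary without proof, the one genuinely delicate point is the inclusion $\hat{\Lambda}_Y\subset\cB$, which is not covered by Lemma~\ref{lem.semi-cont} (stated only for chain transitive sets) nor directly by Lemma~\ref{lem.loren-like-singularity} (stated only for chain recurrence classes), because an arbitrary compact invariant $\Lambda_Y$ need not be chain transitive. You correctly isolate this and close it by the same mechanism the paper invokes in the proof of Lemma~\ref{lem.loren-like-singularity}: disjointness from $\Sing(Y)$ automatically yields $\Lambda_Y\cap W^{ss}(\sigma_Y,Y)=\emptyset$, and a compactness/contradiction argument using the Lorenz-like local dynamics at $\sigma$ then forces a backward exit point of the putative bad orbit into $W^{ss}_{loc}(\sigma)$, contradicting $\Lambda\cap(W^{ss}(\sigma)\setminus\{\sigma\})=\emptyset$. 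The remaining steps, namely transferring the dominated splitting via the lift $\hat{\Lambda}_Y$ and Remark~\ref{rem.normal-bundle}, $C^0$-persistence of sectional expansion, and the determinant identity $|\det(\Phi^Y_{-t}|_{E^Y(x)})|=\frac{\|Y(\phi^Y_{-t}(x))\|}{\|Y(x)\|}\,\|\psi^Y_{-t}|_{\cN_2(x)}\|$ combined with $\|Y\|$ being bounded away from $0$ and $\infty$ on the compact non-singular $\Lambda_Y$, are routine and correct.
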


\section{Uniqueness of Lyapunov stable class: proof of Proposition \ref{prop.ls-class}}\label{sect.ls}

In this section we prove Proposition \ref{prop.ls-class}. In other words, we show that for $C^1$ perturbations of the vector field $X$, any Lyapunov stable chain recurrence class in a small neighborhood of $\Lambda$ contains the singularity $\sigma$ and the periodic orbit $Q$. We shall make use of the sectional expanding property of the $E^{cu}$ subbundle in the splitting $T_{\La}\Omega=E^{ss}\oplus E^{cu}\oplus \RR^I$. The difficulty lies in the fact that $E^{cu}$ is exceptional: there is no domination between the subbundles $E^{cu}$ and $\RR^I$. As a bypass, we will consider the dominated splitting $\wt{\cN}_{\cF(\Lambda)}=\cN_1\oplus \cN_2$ given by Proposition \ref{prop.away-tangencies}, where $\cN_1=\cN^{ss}\oplus \cN^I$.

The rough idea for the proof of Propostion \ref{prop.ls-class} goes as follows. We will first extend the dominated splitting $\wt{\cN}_{\cF(\Lambda)}=\cN_1\oplus \cN_2$ to a neighborhood $\cB_0$ of $\cF(\La)$ and define a cone around $\cN_2$ on the normal bundle (Section \ref{sect.cone-on-normal-bundle}), then use the cone on the normal bundle to construct an invariant cone on the cross-section $\Sigma$ for the first return map and show that vectors in the cone on $\Sigma$ are expanded by the first return map (Section \ref{sect.forward-inv} and Section \ref{sect.inv-cone}), and finally finish the proof of the proposition by an analysis of the first return map (Section \ref{sect.pf-prop-ls}).

\subsection{Dominated splitting and cone field on the normal bundle}\label{sect.cone-on-normal-bundle}
For simplicity, we assume that for the dominated splitting $\wt{\cN}_{\cF(\Lambda)}=\cN_1\oplus \cN_2$ the domination constant $T=1$. In other words,
\[\|\tilde{\psi}_t|_{\cN_1(l)}\|\cdot\|\tilde{\psi}_{-t}|_{\cN_2(l_t)}\|<\frac{1}{2},\quad\forall t\geq 1,\]
where $l_t=\hat{\Phi}_t(l)$.
We extend the splitting $\wt{\cN}_{\cF(\Lambda)}=\cN_1\oplus \cN_2$ to a neighborhood $\cB_0$ of $\cF(\Lambda)$ in a continuous way. Define on $\cB_0$ a cone field $\cC_{\alpha}$ on the normal bundle around the $\cN_2$ subbundle for any $\alpha>0$:
\[\cC_{\alpha}=\{v=v_1+v_2\in\cN_{\cB_0}: v_1\in\cN_1, v_2\in \cN_2, \|v_1\|\leq\alpha\|v_2\|\}.\]
We define also the cone\footnote{Technically, $\wt{\cC}_{\alpha}$ is not a cone, but a ``wedge''.} $\wt{\cC}_{\alpha}=\cB_0\oplus\cC_{\alpha}$. In other words, $v\in\wt{\cC}_{\alpha}$ if and only if there is a decomposition $v=v_1+v_2$ such that $v_1\in\cB_0$ and $v_2\in\cC_{\alpha}$.
By domination of the splitting $\cN_1\oplus\cN_2$ and continuity, we have the following lemma.
\begin{Lemma}\label{lem.cone-0}
	For any $\alpha>0$, there exist a neighborhood $\cU$ of $X$ and a neighborhood $\cB\subset\cB_0$ of $\cF(\Lambda)$ such that for any any $Y\in\cU$ and $l\in\cB$, if $\hat{\Phi}^Y_{[0,t]}(l)\subset \cB$ with $t\geq 1$, then
	\[\tilde{\psi}^Y_t(\cC_{\alpha}(l))\subset\cC_{\alpha/2}(\hat{\Phi}^Y_t(l)) \quad \text{and}\quad \tilde{\Phi}^Y_t(\wt{\cC}_{\alpha}(l))\subset\wt{\cC}_{\alpha/2}(\hat{\Phi}^Y_t(l)).\]
\end{Lemma}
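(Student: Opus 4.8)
The plan is to prove Lemma \ref{lem.cone-0} as a fairly routine consequence of the domination of the splitting $\cN_1\oplus\cN_2$ together with a compactness-and-continuity argument that upgrades the property from $\cF(\Lambda)$ to a neighborhood $\cB$ in $G^1$ and to $C^1$-nearby vector fields $Y$. First I would fix $\alpha>0$ and recall what domination with constant $T=1$ gives us on $\cF(\Lambda)$: for any $l\in\cF(\Lambda)$ and $t\ge 1$ we have $\|\tilde\psi_t|_{\cN_1(l)}\|\cdot\|\tilde\psi_{-t}|_{\cN_2(l_t)}\|<\tfrac12$, where $l_t=\hat\Phi_t(l)$. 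The standard cone-contraction computation then shows that on $\cF(\Lambda)$ itself, $\tilde\psi_t(\cC_\alpha(l))\subset\cC_{\alpha/2}(l_t)$ for all $t\ge 1$: if $v=v_1+v_2$ with $\|v_1\|\le\alpha\|v_2\|$, write $\tilde\psi_t v = \tilde\psi_t v_1 + \tilde\psi_t v_2$, using that $\cN_1,\cN_2$ are $\tilde\psi_t$-invariant over $\cF(\Lambda)$; then $\|\tilde\psi_t v_1\|\le\|\tilde\psi_t|_{\cN_1}\|\,\|v_1\|$ and $\|\tilde\psi_t v_2\|\ge\|\tilde\psi_{-t}|_{\cN_2(l_t)}\|^{-1}\|v_2\|$, so the ratio of the new $\cN_1$-component to the new $\cN_2$-component is bounded by $\|\tilde\psi_t|_{\cN_1(l)}\|\cdot\|\tilde\psi_{-t}|_{\cN_2(l_t)}\|\cdot\alpha < \alpha/2$.

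Next I would remove the dependence on being exactly in $\cF(\Lambda)$. The cones $\cC_\alpha$ and the continuous extension of the splitting $\cN_1\oplus\cN_2$ are already defined on the neighborhood $\cB_0$. Over $\cB_0$ the extended splitting is no longer $\tilde\psi_t$-invariant, but it is still \emph{approximately} invariant by continuity; the key structural fact is that cone contraction is an open condition. More precisely, for a fixed time $t_0\in[1,2]$, the condition ``$\tilde\psi^Y_{t_0}(\cC_\alpha(l))\subset\cC_{\alpha/2}(\hat\Phi^Y_{t_0}(l))$'' holds on a neighborhood of $\{(l,X):l\in\cF(\Lambda)\}$ in $\cB_0\times\xX^1(M)$, by continuity of $\tilde\psi_t$, $\hat\Phi_t$ and the extended splitting jointly in $(l,Y)$ and compactness of $\cF(\Lambda)$. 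Then one uses the familiar trick for passing from a fixed finite time to all $t\ge1$: write an arbitrary $t\ge1$ as $t = n t_0 + r$ with appropriate $n$ and $r\in[1,2]$ (or simply cover $[1,2]$ and iterate a single cone-contraction time), and compose; the condition ``$\hat\Phi^Y_{[0,t]}(l)\subset\cB$'' guarantees every intermediate point stays in the region where the single-step contraction is valid, so the composition of contractions of $\cC_\alpha$ into $\cC_{\alpha/2}$ still lands in $\cC_{\alpha/2}$ (indeed the cones only shrink further, so staying inside $\cC_{\alpha/2}\subset\cC_\alpha$ is automatic along the orbit). One should be a little careful to choose the neighborhoods so that the single-step statement is ``$\cC_\alpha\to\cC_{\alpha/2}$'' rather than merely ``$\cC_\alpha\to\cC_\alpha$'', which is what makes the composition close up cleanly; this is arranged by taking $t_0$ slightly larger than $1$ so that $\|\tilde\psi^Y_{t_0}|_{\cN_1}\|\cdot\|\tilde\psi^Y_{-t_0}|_{\cN_2}\|$ is bounded well below $1/2$ on $\cF(\Lambda)$, leaving room for the perturbation.

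For the statement about the ``wedge'' $\wt\cC_\alpha = \cB_0\oplus\cC_\alpha$ and the extended tangent flow $\tilde\Phi^Y_t$, I would argue in the same way, using the defining relation $\tilde\psi_t(l,v) = \tilde\Phi_t(l,v) - \langle\Phi_t u,\Phi_t v\rangle\cdot\tilde\Phi_t(l,u)$ (with $u\in l$ a unit vector) to compare the two flows: a vector in $\wt\cC_\alpha(l)$ decomposes as (something along the line $l$) plus (something in $\cC_\alpha(l)$), the $\tilde\Phi_t$-image of the line-direction part stays along $\hat\Phi_t(l)$, and the $\tilde\psi_t$-part of the image of the $\cC_\alpha$ component lies in $\cC_{\alpha/2}$ by the first paragraph, so the whole image lies in $\hat\Phi_t(l)$-direction plus $\cC_{\alpha/2}(\hat\Phi_t(l))$, i.e. in $\wt\cC_{\alpha/2}(\hat\Phi_t(l))$. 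The main (and only real) obstacle is the bookkeeping of the several neighborhoods — choosing $\cU$, $\cB\subset\cB_0$, and the auxiliary time $t_0$ in the right order so that the single-time cone contraction is robust and composes; this is standard but must be done carefully. I would state it so that $\cB$ depends on $\alpha$, shrink $\cB_0$ at the outset if needed so the extended splitting is close enough to the invariant one, and then the passage to all $t\ge1$ and to the wedge are formal.
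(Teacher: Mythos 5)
Your proposal is correct and fills in, in the standard way, exactly the argument the paper leaves implicit (the paper offers no proof beyond the remark ``by domination of the splitting $\cN_1\oplus\cN_2$ and continuity''). The cone-contraction computation from the domination constant $T=1$, the compactness/continuity upgrade to a neighborhood $\cB$ and nearby $Y$, the iteration over time steps in $[1,2]$ using $\cC_{\alpha/2}\subset\cC_\alpha$, and the passage to the wedge via the relation between $\tilde{\Phi}_t$ and $\tilde{\psi}_t$ are all the right ingredients and in the right order.
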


Note that the vector bundle $\cF(\Lambda)\oplus \cN_2$ is invariant for the extended tangent flow $\tilde{\Phi}_t$.
By Remark \ref{rmk.exceptional-bundle}, $\beta_*(\cF(\Lambda)\oplus \cN_2)=E^{cu}_{\Lambda}$, which is sectionally expanded by the tangent flow.
Following from the continuity of the map $\beta_*$ and by reducing $\cB_0$, we can fix $\alpha_0>0$ small and a neighborhood $\cU_0$ of $X$ such that for any $Y\in\cU_0$ and $l\in\cB_0$, if $\hat{\Phi}^Y_{[0,t]}(l)\subset \cB_0$ with $t\geq 1$, then for any 2-dimensional subspace $\cP\subset\wt{\cC}_{\alpha_0}(l)$, the projection $\beta_*(\cP)$ is close enough to $E^{cu}_{\Lambda}$ and hence
	\begin{equation}\label{eq.sectional-expanding}
		\left|\det\left(\Phi^Y_t|_{\beta_*(\cP)}\right)\right|>e^{\gamma t/2},
	\end{equation}
where $\gamma>0$ is given as in \eqref{eq.sect-exp}.

We assume that the results of Lemma \ref{lem.cone-0} hold for the fixed $\alpha=\alpha_0$ with $\cU=\cU_0$ and $\cB=\cB_0$. Moreover, by Lemma \ref{lem.loren-like-singularity} we assume that
\begin{equation}\label{eq.lorenz-like}
W^{ss}(\sigma_Y,Y)\cap C(\sigma_Y,Y)=\{\sigma_Y\}, \quad \text{for any }\ Y\in\cU_0,
\end{equation}
where $W^{ss}(\sigma_Y,Y)$ is the strong stable manifold of $\sigma_Y$ tangent to $E^{ss}_{\sigma}\oplus \RR^I$.

\subsection{A forward invariance of \texorpdfstring{$\cB_0$}{B0}}\label{sect.forward-inv}

Recall that there is a cross-section $\Sigma=\Sigma_0\times I$ of the flow $\phi^X_t$. One can see that $\Sigma$ remains a cross-section for vector fields close to $X$. In this section we prove the following lemma.
\begin{Lemma}\label{lem.forward-inv}
  There exist a neighborhood $U_1$ of $\Lambda$ and a $C^1$ neighborhood $\cU_1$ of $X$, such that for any $p\in\Sigma\cap\overline{U_1}$, for any $Y\in\cU_1$, and $t\geq 0$, it holds $l^Y_{p_t}\in\cB_0$, where $p_t=\phi^Y_t(p)$.
\end{Lemma}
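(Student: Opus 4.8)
The plan is to follow the direction $l^Y_{p_t}=\langle Y(p_t)\rangle$ along the orbit and show it never leaves $\cB_0$, splitting the time axis into a bounded initial interval and an asymptotic regime, and, in the latter, separating the passages of the orbit that stay a definite distance from $\si_Y$ from those that come close to $\si_Y$. Recall that $\cF(\Lambda)=B(\Lambda)$ (Lemma \ref{lem.f-seq}) is a compact subset of the open set $\cB_0$, that $B(\Lambda)$ meets $T_{\si}\Omega$ exactly in $Gr(1,E^{cu}_{\si})$ with $E^{cu}_\si=E^c_\si\oplus E^u_\si$, and that at $\si$ the splitting $(E^{ss}_\si\oplus\RR^I)\oplus E^c_\si\oplus E^u_\si$ of $T_\si\Omega$ is dominated, with exponents $\la^s\le\la^s_0<-\theta<\la^c<0<\la^u$ by \eqref{eq.eigenvalues}; in particular $(E^{ss}_\si\oplus\RR^I)\oplus E^{cu}_\si$ is itself a dominated splitting, the gap being $-\theta<\la^c$. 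I would first fix, inside $\cU_0$ and $U_\Lambda$, a $C^1$ neighborhood $\cU_1$ of $X$ and an attracting neighborhood $U_1$ of $\Lambda$, small enough that for every $Y\in\cU_1$: $U_\Lambda$ stays a forward-invariant trapping region, so $\phi^Y_t(\overline{U_1})\subset U_\Lambda$ for all $t\ge0$ and $\phi^Y_t(\overline{U_\Lambda})$ lies in a prescribed small neighborhood of $\Lambda$ once $t\ge T_0$ for some uniform $T_0$; $\si_Y$ remains Lorenz-like with the above dominated splitting at $\si_Y$; and \eqref{eq.lorenz-like} holds. It then suffices to keep $l^Y_{p_t}$ within a fixed small distance of $\cF(\Lambda)$.

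For the bounded regime I would compare with a point $p^*\in\Lambda\cap\Sigma$ closest to $p$. Since $\si\notin\Sigma$, $p^*$ is regular and its $\phi^X$-orbit on $[0,T_0]$ avoids a neighborhood of $\si$, so by continuous dependence of the orbit and of the flow direction on $(Y,p)$ over the compact interval $[0,T_0]$, shrinking $U_1$ and $\cU_1$ forces $l^Y_{p_t}$ to be close to $l^X_{\phi^X_t(p^*)}\in B(\Lambda)\subset\cB_0$ for all $t\in[0,T_0]$.

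For $t\ge T_0$, $p_t$ lies in a prescribed small neighborhood of $\Lambda$. Fix a small radius $\delta>0$. On $\{x:\mathrm{dist}(x,\si_Y)\ge\delta\}$ the vector field $Y$ is bounded below away from $0$, uniformly in $Y\in\cU_1$, so whenever $p_t$ is in this region it is close to some $q\in\Lambda\setminus\{\si\}$ and $l^Y_{p_t}$ is close to $l^X_q\in B(\Lambda)\subset\cB_0$; moreover, for $\delta$ small, every such $q$ with $\mathrm{dist}(q,\si)\le2\delta$ has $l^X_q$ very close to $Gr(1,E^{cu}_\si)$, by $B(\Lambda)\cap T_\si\Omega=Gr(1,E^{cu}_\si)$ and compactness. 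The remaining situation is a passage through $B(\si_Y,\delta)$: at the instant of entering, $p_t\in\partial B(\si_Y,\delta)$ is in the region just treated, so $l^Y_{p_t}$ is very close to $Gr(1,E^{cu}_{\si_Y})$; then inside $B(\si_Y,\delta)$ I would use the $\Phi^Y_t$-forward-invariant narrow cone around $E^{cu}_{\si_Y}$ furnished by the dominated splitting $(E^{ss}_{\si_Y}\oplus\RR^I)\oplus E^{cu}_{\si_Y}$ to conclude that $l^Y_{p_t}$ stays within that cone, hence within $\cB_0$, until the orbit leaves $B(\si_Y,\delta)$ or converges to $\si_Y$. (This also rules out $p\in\Sigma\cap\overline{U_1}$ lying on $W^{ss}(\si_Y,Y)$: such an orbit would have to enter $B(\si_Y,\delta)$ with velocity in $E^{ss}_{\si_Y}\oplus\RR^I$, contradicting the previous sentence.) Combining the bounded regime with these two cases covers all $t\ge0$.

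I expect the passage near $\si_Y$ to be the main obstacle. Because the ambient splitting $E^{ss}\oplus E^{cu}\oplus\RR^I$ of $T_\Lambda\Omega$ has no domination between $\RR^I$ and $E^{cu}$, closeness of the orbit to $\Lambda$ does not by itself control the flow direction once $Y$ becomes small near $\si_Y$. The two ingredients that make it work are the eigenvalue gap $-\theta<\la^c$, which upgrades $(E^{ss}\oplus\RR^I)\oplus E^{cu}$ at $\si_Y$ to a dominated splitting and hence supplies a forward-invariant cone around $E^{cu}_{\si_Y}$ near $\si_Y$, and the observation that at the boundary $\partial B(\si_Y,\delta)$, where $Y$ is bounded below, orbits issued from $\Sigma$ already point close to $E^{cu}_{\si_Y}$.
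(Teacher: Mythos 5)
Your argument is correct and reaches the same intermediate waypoints as the paper's (a forward-invariant cone around $E^{cu}_{\sigma_Y}$ inside a small ball around $\sigma_Y$, plus a continuity argument away from $\sigma_Y$), but the way you certify the ``cone-entry'' condition at the boundary of the small ball is genuinely different from the paper's. The paper (Lemma~5.4) does this geometrically: it introduces the compact set $K=\bigl(\bigcup_{t\geq 0}\phi^X_t(L)\bigr)\cap\partial W_0$, observes that $l^X_p\in C^{cu}_\alpha(p)$ for $p\in K$ because orbits through $L=W^s_{\mathrm{loc}}(\sigma)\cap\Sigma$ converge to $\sigma$ tangent to $E^c$, and shows that any orbit segment from $\Sigma$ into $W_1$ must cross a neighborhood $N_K$ of $K$. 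You instead argue abstractly: since $\Lambda$ is an attractor, orbits issued from $\Sigma\cap\overline{U_1}$ stay $\epsilon$-close to $\Lambda$ for all $t\geq 0$ (by continuous dependence on $[0,T_0]$ and trapping for $t\geq T_0$), and since $B(\Lambda)$ is compact with $B(\Lambda)\cap\beta^{-1}(\sigma)=Gr(1,E^{cu}_\sigma)$ (Lemma~\ref{lem.f-seq}), a point of $\Lambda$ at distance $\approx\delta$ from $\sigma$ automatically has $l^X$ close to $Gr(1,E^{cu}_\sigma)$; this controls $l^Y_{p_t}$ at the moment the orbit crosses $\partial B(\sigma_Y,\delta)$. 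Your route avoids the explicit construction of $K$ and the ``no returns'' bookkeeping for orbit segments, at the cost of needing uniform attraction to $\Lambda$ for the whole forward orbit and a slightly more delicate ordering of constants ($\alpha'$, then $\delta$, then $\epsilon$, then $T_0$, then $\cU_1,U_1$); the paper's route is more concrete and localizes all the near-singularity analysis to a single pass through $W_0$. Both produce, as a byproduct, that orbits from $\Sigma\cap\overline{U_1}$ miss $W^{ss}(\sigma_Y,Y)$, which you correctly flag in your parenthetical.
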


For the proof of Lemma \ref{lem.forward-inv} we need consider orbit segments arbitrarily close to the singularity.
Note that the tangent space at the singularity $\sigma$ admits a dominated splitting $T_{\sigma}\Omega=F^{ss}_{\sigma}\oplus E^{cu}_{\sigma}$, where $F^{ss}_{\sigma}=E^{ss}_{\sigma}\oplus \RR^I$; and by construction, it holds $\cF(\Lambda)\cap T_{\sigma}\Omega=E^{cu}_{\sigma}$.
One defines on $T_{\sigma}\Omega$ a $cu$-cone $C^{cu}_{\alpha}$ around the $E^{cu}_{\sigma}$ bundle. By domination, the cone $C^{cu}_{\alpha}$ is forward contracting. We extend the cone continuously to a ball around $\sigma$, which is denoted by $W_0$.
Let $K$ be the intersection of $\bigcup_{t\geq 0}\phi^X_t(L)$ with $\partial W_0$, which is a compact subset of $\partial W_0$.
We assume that the ball $W_0$ is small such that $W_0\cap \Sigma=\emptyset$ and since the singularity is Lorenz-like, there exists $\alpha>0$ such that for any $p\in K$, it holds $l^X_p\subset C^{cu}_{\alpha}(p)$. By reducing $\cU_0$ if necessary, there exists a neighborhood $N_K\subset \partial W_0$ of $K$ such that $l^Y_p\subset C^{cu}_{\alpha}(p)$ for any $p\in N_K$ and $Y\in\cU_0$.
Moreover, we assume that the cone $C^{cu}_{\alpha}$ is forward contracting under the tangent flow $\Phi^Y_t$ for any $Y\in\cU_0$.

\begin{Lemma}\label{lem.nbhd-of-sigma}
	There exists a neighborhood $W_1\subset W_0$ of $\si$ and a neighborhood $\cU_1\subset \cU_0$ of $X$ such that
for any $Y\in\cU_1$, any orbit segment from a point $p\in\Sigma$ to a point $q\in \overline{W_1}$ crosses $N_K$, and moreover, $l^Y_q\in \cB_0$.
\end{Lemma}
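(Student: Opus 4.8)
The plan is to argue by contradiction, producing a limit orbit that would be forced to hit the singularity $\sigma$ in finite time. Suppose the conclusion fails; then, letting the candidate neighborhoods shrink, there are vector fields $Y_n\to X$, points $p_n\in\Sigma$, points $q_n\to\sigma$, and orbit segments under $Y_n$ from $p_n$ to $q_n$ for which, after passing to a subsequence, either (i) the segment misses $N_K$ for all $n$, or (ii) $l^{Y_n}_{q_n}\notin\cB_0$ for all $n$. For each $n$ let $p_n'$ be the last point of the segment on $\Sigma$ before $q_n$, and let $c_n\in\partial W_0$ be the first point of the segment after $p_n'$ that lies on $\partial W_0$ (it exists since $\overline{W_0}\cap\Sigma=\emptyset$ and the segment reaches $q_n\in W_0$). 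The first thing to notice is that the segment from $p_n'$ to $q_n$ stays outside $W_0$ up to $c_n$ and inside $W_0$ from $c_n$ to $q_n$: once inside $W_0$, an orbit that exited would have to leave near $W^u(\sigma)\cap\partial W_0$ and then follow the unstable separatrix of $\sigma$ back to $\Sigma$, contradicting the maximality of $p_n'$. Hence $c_n$ is the only $\partial W_0$-crossing of that segment, and in case (i) we have $c_n\notin N_K$.

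The key step is to show $p_n'\to L$. If not, after a further subsequence $p_n'$ lies in a fixed compact set $\Sigma'\subset\Sigma\setminus L$. By properties (P1)--(P4) of the geometric Lorenz attractor, every $X^0$-orbit from a point of $\Sigma_0\setminus L_0$ returns to $\Sigma_0$, and since such a point does not lie on $W^s(O)$, a compactness argument gives a time bound $T_0=T_0(\Sigma')$ and a distance $\delta=\delta(\Sigma')>0$ such that orbits from $\Sigma'$ return to $\Sigma$ within time $T_0$ while staying at distance $\geq\delta$ from $\sigma$. I would then shrink $\cU_1$ and $W_1$ so that $\overline{W_1}\subset B(\sigma,\delta/2)$ and so that for $Y\in\cU_1$ orbits from $\Sigma'$ still return to $\Sigma$ in bounded time while staying at distance $>\delta/2$ from $\sigma$; this is a contradiction, since the segment from $p_n'$ would then meet $\Sigma$ again before it could reach $q_n\in\overline{W_1}$, contradicting the choice of $p_n'$.

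With $p_n'\to L$, over the bounded time it takes an orbit starting near $L$ to reach $\partial W_0$, continuous dependence on the initial point and on the vector field (together with transversality of the flow to $\partial W_0$ along the entering set $K$) yields $c_n\to c_\infty$ with $c_\infty\in K$; in particular $c_n\in N_K$ for $n$ large, contradicting case (i). For case (ii), since $c_n\in N_K$ we have $l^{Y_n}_{c_n}\subset C^{cu}_{\alpha}(c_n)$; as the segment from $c_n$ to $q_n$ lies in $W_0$ and the cone field $C^{cu}_{\alpha}$ is forward contracting under $\Phi^{Y_n}_t$, with an exponential rate of cone-width contraction provided by the domination $F^{ss}_{\sigma}\oplus E^{cu}_{\sigma}$ at $\sigma$, the line $l^{Y_n}_{q_n}$ lies in a cone about $E^{cu}_{q_n}$ whose width shrinks exponentially in the transit time $t_n$ from $c_n$ to $q_n$. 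A final compactness argument gives $t_n\to\infty$: otherwise a limit $X$-orbit would travel from $c_\infty\in\partial W_0$ to $\sigma$ in finite time, impossible for an equilibrium. Since $q_n\to\sigma$ and $E^{cu}$ is continuous, $l^{Y_n}_{q_n}$ converges into $E^{cu}_{\sigma}=\cF(\Lambda)\cap T_{\sigma}\Omega$, hence lies in $\cB_0$ for $n$ large, contradicting case (ii).

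I expect the main obstacle to be the structural input about the Lorenz flow behind $p_n'\to L$ — namely that any orbit emanating from $\Sigma$ which enters the small ball $W_0$ before returning to $\Sigma$ must start near $L$, and that $W_0$ can be chosen so that the flow is transverse to $\partial W_0$ along $K$, so that the convergence $c_n\to c_\infty\in K$ is stable under $C^1$ perturbations. Once these are in place, the remaining steps are standard continuous-dependence and cone-contraction estimates, most of which were already set up in the paragraphs preceding the statement.
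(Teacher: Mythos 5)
Your proposal is correct and is essentially the paper's argument recast as a compactness/contradiction argument: both rely on the same two facts --- that orbit segments from $\Sigma$ reaching a small ball around $\sigma$ without first returning to $\Sigma$ must enter $W_0$ through a point near $K$, and that the forward contraction of the $cu$-cone over a long transit time forces $l^Y_q$ into $\cB_0$. Your more explicit justification (via $p_n'\to L$ and continuous dependence of the entrance point $c_n$) fills in the same Lorenz geometry that the paper treats tersely, and the informal assertion that the segment from $c_n$ to $q_n$ stays in $\overline{W_0}$ is stated at the same level of rigor as the corresponding claim in the paper's proof.
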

\begin{proof}
Since $\Sigma$ can be a cross-section for any vector field $C^1$ close to $X$ and its image under the first return map remains in the interior of $\Sigma$, we need only consider orbit segments from $p\in\Sigma$ to $q\in\overline{W_1}$ without returns to $\Sigma$.
Since $N_K$ is a neighborhood of $K$, there exists a neighborhood $W_1\subset W_0$ of $\si$ such that any orbit segment $S$ of $X$ from a point $p\in\Sigma$ to a point $q\in \overline{W_1}$ (without returns) crosses $N_K$. Moreover, let $r$ be the intersection of $S$ with $N_K$, then the orbit segment from $r$ to $q$ is contained in $\overline{W_0}$. See Figure \ref{fig.local-dynamics}. Shrinking $W_1$ if necessary, there exists a neighborhood $\cU_1\subset \cU_0$ of $X$ such that the same property holds for every $Y\in\cU_1$.
The following argument shows that $l^Y_q\in\cB_0$ if we choose $W_1$ small enough.

\begin{figure}[htbp]
\centering
  \includegraphics[width=.25\textwidth]{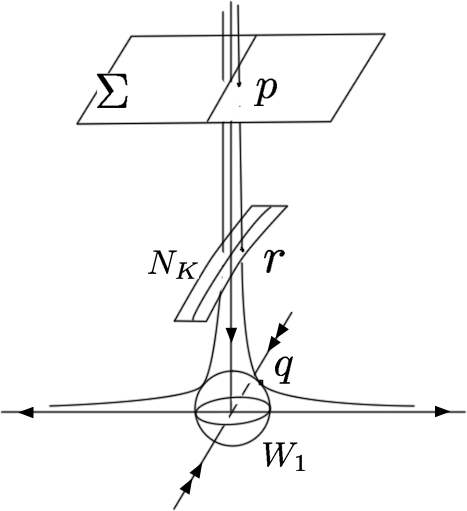}
  \caption{Local analysis around the singularity. Note \\ that the strong stable direction is 2-dimensional.}\label{fig.local-dynamics}
\end{figure}

Since $E^{cu}_{\sigma}\subset \cF(\Lambda)$ and $\cB_0$ is a neighborhood of $\cF(\La)$, we can assume that $W_1$ is small enough and there exists a constant $\alpha'>0$ such that for any $q\in \overline{W_1}$ and nonzero vector $v\in T_q\Omega$, if $v\in C^{cu}_{\alpha'}(q)$, then $\langle v\rangle \in \cB_0$; and
if the backward orbit of $q$ hits $N_K$ at a point $r$ before leaving $W_0$, then the time $t$ from $r$ to $q$ is large enough so that $\Phi^Y_t(C^{cu}_{\alpha}(r))\subset C^{cu}_{\alpha'}(q)$, by the contracting property of the cone. In particular, since $l^Y_r\subset C^{cu}_{\alpha}(r)$, we have $l^Y_q=\hat{\Phi}^Y_t(l^Y_r)\subset C^{cu}_{\alpha'}(q)$. It follows that $l^Y_q\in\cB_0$.
\end{proof}

The next result considers orbit segments away from the singularity.
\begin{Lemma}\label{lem.cone-1}
Reducing $\cU_1$ if necessary, there exists
an attracting neighborhood $U_1$ of $\Lambda$, such that for any $Y\in\cU_1$, one has $\phi^Y_1(\overline{U_1})\subset U_1$. Moreover, for any $p\in \overline{U_1}\setminus W_1$, it holds that $l^{Y}_{p_t}\in \cB_0$ for all $t\in[0,1]$, where $p_t=\phi^Y_t(p)$, and $\tilde{\psi}^{Y}_1(\cC_{\alpha_0}(l^{Y}_p))\subset \cC_{\alpha_0/2}(l^{Y}_{p_1})$.
\end{Lemma}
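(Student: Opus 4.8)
The plan is to treat Lemma~\ref{lem.cone-1} as the counterpart, away from the singularity, of Lemma~\ref{lem.nbhd-of-sigma}: once the flow directions are shown to stay in $\cB_0$ along the relevant orbit segments, the cone estimate is an immediate application of Lemma~\ref{lem.cone-0}. To fix $U_1$, I would use that $\Lambda$ is an attractor of $X$ inside $U_{\Lambda}$ to pick a small attracting region $U_1\subset U_{\Lambda}$ of $\Lambda$ with $\phi^X_1(\overline{U_1})\subset U_1$; as this only asks that $X$ point strictly inward along $\partial U_1$, it is a $C^1$-open condition, so $\phi^Y_1(\overline{U_1})\subset U_1$ persists for every $Y$ in a $C^1$ neighborhood of $X$, and then $\overline{U_1}$ is positively invariant under $\phi^Y_t$; in particular $\phi^Y_{[0,1]}(p)\subset\overline{U_1}$ whenever $p\in\overline{U_1}$. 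I keep the freedom to shrink $U_1$, which the direction estimate below will require.

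The heart of the proof is the claim that $l^Y_{p_t}\in\cB_0$ for every $p\in\overline{U_1}\setminus W_1$ and $t\in[0,1]$. With $W_1$ fixed (as chosen around Lemma~\ref{lem.nbhd-of-sigma}), set $\Gamma_1=\bigcup_{t\in[0,1]}\phi^X_t(\overline{U_1}\setminus W_1)$, a compact set. Since $\sigma$ is a fixed point and $\phi^X_t$ is injective, $\sigma\notin\Gamma_1$, so $\Gamma_1$ consists of regular points; by continuity of the flow in the vector field the analogous set $\Gamma_1^Y$ for $Y$ $C^1$-close to $X$ still avoids $\sigma_Y$ and hence consists of regular points, and these are exactly the points $p_t$. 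Moreover, as $U_1\downarrow\Lambda$ with $W_1$ fixed, $\Gamma_1$ accumulates, in the Hausdorff topology, on the compact set $K_\infty=\phi^X_{[0,1]}(\Lambda\setminus\mathrm{int}\,W_1)\subset\Lambda$, which still avoids $\sigma$. On $K_\infty$ the map $x\mapsto l^X_x$ is continuous with image contained in $B(\Lambda)\subset\cF(\Lambda)$, hence in the interior of $\cB_0$; so by continuity of $(Y,x)\mapsto l^Y_x$ at regular points, together with compactness, after shrinking $U_1$ and then $\cU_1\subset\cU_0$ one gets $l^Y_{p_t}\in\cB_0$ for all $p\in\overline{U_1}\setminus W_1$, $t\in[0,1]$ and $Y\in\cU_1$. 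I expect this step to be the main obstacle; within it the delicate case is that of orbit segments which dip into $W_1$ and pass near the singularity, and it can be handled because those segments stay a definite distance from $\sigma_Y$ (compactness, $\sigma$ being a fixed point) and because directions of points of $\Lambda$ near $\sigma$ accumulate only on $E^{cu}_{\sigma}\subset\cF(\Lambda)$ (equivalently $\cF(\Lambda)=B(\Lambda)$, Lemma~\ref{lem.f-seq}), so the fixed neighborhood $\cB_0$ of $\cF(\Lambda)$ absorbs all of them.

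Finally, the cone estimate is immediate. Fix $p\in\overline{U_1}\setminus W_1$ and put $l=l^Y_p$; by the previous step $l\in\cB_0$ (the case $t=0$) and $\hat\Phi^Y_{[0,1]}(l)=\{l^Y_{p_t}:t\in[0,1]\}\subset\cB_0$. Applying Lemma~\ref{lem.cone-0}, which we arranged to hold with $\alpha=\alpha_0$, $\cU=\cU_0\supset\cU_1$ and $\cB=\cB_0$, to this $l$ with $t=1$ gives
\[\tilde\psi^Y_1\big(\cC_{\alpha_0}(l^Y_p)\big)\subset\cC_{\alpha_0/2}\big(\hat\Phi^Y_1(l^Y_p)\big)=\cC_{\alpha_0/2}(l^Y_{p_1}),\]
which is the remaining assertion of the lemma.
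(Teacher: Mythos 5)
Your proposal is correct and takes essentially the same approach as the paper: both proofs reduce the lemma to showing that the relevant directions lie in $\cB_0$ by a compactness-plus-continuity argument, and then invoke Lemma \ref{lem.cone-0}. The only cosmetic difference is in the bookkeeping: the paper first shrinks to an auxiliary neighborhood $W'_1\subset W_1$ of $\sigma$ (chosen so that time-$[-1,1]$ orbits starting in $W'_1$ stay in $W_1$, whence $p\notin W_1$ forces $p_t\notin W'_1$) and then uses the compact set of directions $\{l^X_p:p\in\Lambda\setminus W'_1\}\subset\operatorname{int}\cB_0$, whereas you work directly with the orbit-segment set $\Gamma_1=\phi^X_{[0,1]}(\overline{U_1}\setminus W_1)$ and its Hausdorff limit $K_\infty=\phi^X_{[0,1]}(\Lambda\setminus\operatorname{int}W_1)$; the "definite distance from $\sigma$" observation you make is exactly what the $W'_1$ device encodes.
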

\begin{proof}
	Reducing $\cU_1$ if necessary, we can take a neighborhood $W'_1\subset W_1$ of $\sigma$ such that for any $p\in W'_1$ and any $Y\in\cU_1$, we have $\phi^Y_t(p)\in W_1$ for all $t\in[-1,1]$. Since $\sigma$ is the only singularity in $\Lambda$, we have 	\begin{equation}\label{eq.non-singular}
\cF(\Lambda)\setminus\beta^{-1}(W'_1)=\{l^X_p: p\in \Lambda\setminus W'_1\}.
\end{equation}
	By construction of the vector field $X$, there exist arbitrarily small attracting neighborhoods of $\Lambda$. Following from \eqref{eq.non-singular}, we can take an attracting neighborhood $U_1$ of $\Lambda$ and reduce the neighborhood $\cU_1$ of $X$ such that $\phi^Y_1(\overline{U_1})\subset U_1$ and  $l^Y_p\in\cB_0$ for any $Y\in\cU_1$ and for any $p\in \overline{U_1}\setminus W'_1$. This implies that for all $p\in\overline{U_1}\setminus W_1$ and $t\in[0,1]$, we have $l^{Y}_{p_t}\in \cB_0$, and hence by Lemma \ref{lem.cone-0}, $\tilde{\psi}^{Y}_1(\cC_{\alpha_0}(l^{Y}_p))\subset \cC_{\alpha_0/2}(l^{Y}_{p_1})$.
\end{proof}

We can now finish the proof of Lemma \ref{lem.forward-inv}.

\begin{proof}[Proof of Lemma \ref{lem.forward-inv}]
  Let the neighborhoods $\cU_1$ of $X$ and $U_1$ of $\Lambda$ be given by Lemma \ref{lem.nbhd-of-sigma} and Lemma \ref{lem.cone-1}.
  Let $p$ be any point in $\Sigma\cap \overline{U_1}$.
  By Lemma \ref{lem.cone-1}, $l^Y_{p_t}\in \cB_0$ for $t\in [0,1]$. If $p_1\in W_1$, let $t_1>1$ be such that the orbit segment $\phi^Y_{[1,t_1]}(p)\subset \overline{W_1}$ and $p_{t_1}\in\partial W_1$.
Then it follows from Lemma \ref{lem.nbhd-of-sigma} that $l^Y_{p_t}\in \cB_0$ for $t\in [1,t_1]$. If $p_1\notin W_1$, it follows from Lemma \ref{lem.cone-1} again that $l^Y_{p_t}\in \cB_0$ for any $t\in[1,2]$. Inductively, one concludes that $l^Y_{p_t}\in\cB_0$ for all $t\geq 0$.
\end{proof}

\subsection{Invariant cone on the cross-section and expanding property}\label{sect.inv-cone}
Recall that $R:\Sigma\setminus L\to \Sigma$ is the first return map, where $L$ is the intersection of the local stable manifold of $\sigma$ with $\Sigma$ (Section \ref{sect.skew-product}).
For any $Y\in\cU_1$, let $L_Y=W^s_{loc}(\sigma_Y,Y)\cap\Sigma$ and $R^Y:\Sigma\setminus L_Y\to \Sigma$ be the first return map. Let $TR^Y$ be the tangent map of $R^Y$. For simplicity, let us denote
\[\Sigma^*=\Sigma\cap \overline{U_1}.\]
We define in this section a cone on $\Sigma^*$ such that it is invariant under $TR^Y$ and moreover, vectors in the cone will be shown to be expanded by $TR^Y$. This allows us to consider curves tangent to the cone on $\Sigma^*$ and show that the length of such curves increase under the iteration of the first return map.

In the following, for any $0<\alpha\leq\alpha_0$ and $p\in \overline{U_1}\setminus \{\sigma_Y\}$ such that $l^Y_p\in\cB_0$ we denote $\cC^Y_{\alpha}(p)=\beta_*(\wt{\cC}_{\alpha}(l^Y_p))$, which is a cone on the tangent space $T_p\Omega$. We then define a cone field $\cD^Y_{\alpha}$ on $\Sigma^*$ by letting $\cD^Y_{\alpha}(p)=\cC^Y_{\alpha}(p)\cap T_p\Sigma$.
By shrinking $U_1$ and $\cU_1$, the following result holds.
\begin{Lemma}\label{lem.expanding}
  There exist $0<\alpha_1<\alpha_0$ and $\lambda>1$ such that for any $Y\in\cU_1$, any $p\in\Sigma^*\setminus L_Y$ and any $v\in \cD^X_{\alpha_1}(p)$, one has $TR^Y(v)\in \cD^X_{\alpha_1}(p')$, where $p'=R^Y(p)$. Moreover, $\|TR^Y(v)\|\geq\lambda\|v\|$.
\end{Lemma}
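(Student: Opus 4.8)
The plan is to transfer the statement to the cone field $\{\wt\cC_\alpha\}$ on the (extended) normal bundle, using that the return time is bounded below by $2$ and that, by Lemma~\ref{lem.forward-inv}, the direction of any forward orbit issued from $\Sigma^*$ stays in $\cB_0$. First I would record the geometry of $\Sigma^*$: shrinking $U_1$ we may assume $\Sigma^*=\Sigma\cap\overline{U_1}$ lies in an arbitrarily small neighborhood of $\Lambda\cap\Sigma$, and since on $\Sigma$ one has $X=(X^0,\zeta)$ with $\|X^0\|\equiv 1$ and $\zeta(x,s)=-\theta s(1-\eta(x,s))$ small where $s$ is small, the field $X$ — hence every $Y\in\cU_1$ — is nearly orthogonal to $\Sigma$ on $\Sigma^*$ and has norm $1+o(1)$ there; here $o(1)$ denotes quantities going to $0$ as $U_1,\cU_1$ shrink. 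Consequently, for $p\in\Sigma^*$ the projection $T_p\Sigma\to\cN^Y_p$ along the flow is a near-isometry, any $v\in T_p\Sigma$ decomposes as $v=v^\perp+(\text{flow component})$ with $\|v^\perp\|\ge(1-o(1))\|v\|$, and $v\in\cD^X_{\alpha_1}(p)$ means exactly that the orthogonal projection of $v$ onto $\cN^X_p$ lies in $\cC_{\alpha_1}(l^X_p)$. Fix $p\in\Sigma^*\setminus L_Y$, put $\tau=\tau_Y(p)$, $p'=R^Y(p)=\phi^Y_\tau(p)$; by (C1) $\tau>2$, by forward invariance of $U_1$ one has $p'\in\Sigma^*$, and by Lemma~\ref{lem.forward-inv} $\hat\Phi^Y_{[0,\tau]}(l^Y_p)\subset\cB_0$.

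For the cone invariance I would take $\alpha_1$ strictly between $\alpha_0/2$ and $\alpha_0$, say $\alpha_1=2\alpha_0/3$. Shrinking $\cU_1$ so that $l^Y_p$ and the subbundles $\cN_1(l^Y_p),\cN_2(l^Y_p)$ are as close as we wish to their $X$-counterparts, we get $\cD^X_{\alpha_1}(p)\subset\cC^Y_{\alpha_0}(p)$. Applying $\Phi^Y_\tau$ and invoking Lemma~\ref{lem.cone-0} with $\alpha=\alpha_0$ and $t=\tau\ge1$ (legitimate since the orbit of directions stays in $\cB_0$), together with $\beta_*\circ\tilde\Phi^Y_\tau=\Phi^Y_\tau\circ\beta_*$, yields $\Phi^Y_\tau(\cC^Y_{\alpha_0}(p))\subset\cC^Y_{\alpha_0/2}(p')$. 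Now $TR^Y v$ is the projection of $\Phi^Y_\tau v\in\cC^Y_{\alpha_0/2}(p')$ along $Y(p')$ onto $T_{p'}\Sigma$; projecting a vector of the wedge $\wt\cC_{\alpha_0/2}(l^Y_{p'})$ and then reading it in the splitting $\cN^X_1(l^X_{p'})\oplus\cN^X_2(l^X_{p'})$ only multiplies the relevant cone width by $1+o(1)$ (near-isometry of these projections, plus $Y$ close to $X$), so for $U_1,\cU_1$ small the width stays below $\alpha_0/2+o(1)<\alpha_1$; hence $TR^Y v\in\cC^X_{\alpha_1}(p')\cap T_{p'}\Sigma=\cD^X_{\alpha_1}(p')$. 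The slack $\alpha_0/2<\alpha_1<\alpha_0$ is precisely what absorbs the distortion and the perturbation.

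For the expansion, write $v^\perp\in\cC_{\alpha_0}(l^Y_p)\subset\cN^Y_p$ for the normal component of $v\in\cD^X_{\alpha_1}(p)$ and form the $2$-plane $\cP=\langle Y(p)\rangle\oplus\langle v^\perp\rangle$, which lies in $\wt\cC_{\alpha_0}(l^Y_p)$. By the robust sectional-expanding estimate \eqref{eq.sectional-expanding}, $|\det(\Phi^Y_\tau|_{\beta_*\cP})|>e^{\gamma\tau/2}$. Since $\Phi^Y_\tau$ fixes the flow direction and $v^\perp\perp Y(p)$, comparing the areas of $Y(p)\wedge v^\perp$ and $Y(p')\wedge\Phi^Y_\tau v^\perp$ gives $\|\psi^Y_\tau v^\perp\|=|\det(\Phi^Y_\tau|_{\beta_*\cP})|\cdot\frac{\|Y(p)\|}{\|Y(p')\|}\|v^\perp\|$, where $\psi^Y_\tau v^\perp$ is the orthogonal projection of $\Phi^Y_\tau v^\perp$ onto $\cN^Y_{p'}$. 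On $\Sigma^*$ the ratio $\|Y(p)\|/\|Y(p')\|$ is $1+o(1)$, so $\|\psi^Y_\tau v^\perp\|>(1-o(1))e^{\gamma\tau/2}\|v^\perp\|\ge(1-o(1))e^{\gamma}\|v\|$ using $\tau>2$ and $\|v^\perp\|\ge(1-o(1))\|v\|$. Finally, $TR^Y v$ and $\psi^Y_\tau v^\perp$ are both obtained from $\Phi^Y_\tau v$ by deleting its $Y(p')$-component — onto $T_{p'}\Sigma$ and onto $\cN^Y_{p'}$ respectively — so they differ by a multiple of $Y(p')$; since $\psi^Y_\tau v^\perp\perp Y(p')$ this gives $\|TR^Y v\|\ge\|\psi^Y_\tau v^\perp\|$. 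Hence $\|TR^Y v\|>\lambda\|v\|$ with $\lambda=(1-o(1))e^{\gamma}>1$ once $U_1,\cU_1$ are small, as $\gamma>0$.

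The main obstacle is uniformity: the family of orbit segments $\{\phi^Y_{[0,\tau_Y(p)]}(p):p\in\Sigma^*\setminus L_Y,\ Y\in\cU_1\}$ is not compact, because $\tau_Y(p)\to\infty$ as $p\to L_Y$. This is exactly what Lemma~\ref{lem.forward-inv} is for — the orbit of directions never leaves $\cB_0$, so Lemma~\ref{lem.cone-0} and \eqref{eq.sectional-expanding} remain applicable over the entire segment — and the large-$\tau$ regime only improves the expansion factor $e^{\gamma\tau/2}$, so no quantitative trouble arises there. The remaining work is bookkeeping between the three avatars of a tangent vector (as an element of $T_p\Sigma$, of the normal bundle $\cN^Y_p$, and of the ambient wedge $\wt\cC_\alpha(l^Y_p)$) and checking that all comparison constants are $1+o(1)$ uniformly in $Y\in\cU_1$ and $p\in\Sigma^*$; this is where one uses that $\Sigma^*$ sits in a small neighborhood of $\Lambda\cap\Sigma$ (so the flow is nearly orthogonal to $\Sigma$) and that $Y$ is $C^1$-close to $X$.
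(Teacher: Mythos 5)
Your proof is correct and follows essentially the same route as the paper: fix $\alpha_1\in(\alpha_0/2,\alpha_0)$, feed Lemma~\ref{lem.forward-inv} into Lemma~\ref{lem.cone-0} (using $t_p>2$) for the cone invariance, and extract the expansion from \eqref{eq.sectional-expanding} together with near-orthogonality and near-unit norm of $Y$ on $\Sigma^*$. Your determinant/area computation linking $\|\psi^Y_\tau v^\perp\|$ to $|\det(\Phi^Y_\tau|_{\beta_*\cP})|$ just makes explicit what the paper compresses into a one-sentence assertion at the end of its proof.
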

\begin{proof}
   Note that restricted to $\Sigma^*$, the cone $\cC^Y_{\alpha}(p)$ and hence $\cD^Y_{\alpha}(p)$ are uniformly continuous with respect to the vector field $Y$ in the $C^1$ topology. Let us fix $\alpha_1\in(\alpha_0/2,\alpha_0)$. Then by reducing $\cU_1$, we can assume that for any $Y\in\cU_1$, it holds
   \[\cD^Y_{\alpha_0/2}(p)\subset \cD^X_{\alpha_1}(p)\subset \cD^Y_{\alpha_0}(p).\]
   Also, from (C1) in Section \ref{sect.skew-product} we can assume that $t_p>2$ for any $p\in \Sigma^*\setminus L_Y$. Now, for any $v\in \cD^X_{\alpha_1}(p)\subset \cD^Y_{\alpha_0}(p)$, we have $(l^Y_p,v)\in \wt{\cC}^Y_{\alpha_0}(l^Y_p)$. From Lemma \ref{lem.forward-inv} it follows that $l^Y_{p_t}\in\cB_0$ for all $t\geq 0$. Thus, by Lemma \ref{lem.cone-0}, we have
   $\wt{\Phi}^Y_{t_p}(l^Y_p,v)\in\wt{\cC}_{\alpha_0/2}(l^Y_{p'})$, where $p'=R^Y(p)$. Equivalently, we have $\Phi^Y_{t_p}(v)\in \cC^Y_{\alpha_0/2}(p')$. Projecting $\Phi^Y_{t_p}(v)$ to $T_{p'}\Sigma$ along $l^Y_{p'}$, we obtain that
   \[TR^Y(v)\in \cD^Y_{\alpha_0/2}(p')\subset \cD^X_{\alpha_1}(p').\]
   This establishes the invariance of the cone $\cD^X_{\alpha_1}$.

   To see that $TR^Y$ is expanding on $\cD^X_{\alpha_1}$, we shall refer to the sectional expanding property \eqref{eq.sectional-expanding}. Recall that in the construction of the example, we have assumed the vector field $X^0$ to be orthogonal to the cross-section $\Sigma_0$ and $\|X^0(x)\|=1$ for $x\in \Sigma_0$, see (P2) in Section \ref{sect.lorenz}. Then it follows from \eqref{eq.vector-field} that the vector field $X$ is orthogonal to the cross-section $\Sigma$ at points $p\in \Sigma_0\times\{0\}$ and $\|X(p)\|=1$. In particular, this holds for $p\in \Lambda\cap\Sigma$. By shrinking $U_1$ and $\cU_1$ if necessary, we can assume that for any $Y\in\cU_1$ and any $p\in \Sigma^*=\Sigma\cap\overline{U_1}$, $Y(p)$ is almost orthogonal to the cross-section and $\|Y(p)\|$ is close to 1. Then the sectional expanding property \eqref{eq.sectional-expanding} allows us to obtain a constant $\lambda>1$, independent of $Y$, such that $\|TR^Y(v)\|\geq \lambda\|v\|$.
\end{proof}

The previous result shows that the cone $\cD^X_{\alpha_1}$ on $\Sigma^*$ is invariant for $TR^Y$, for any $Y\in\cU_1$. Moreover, $TR^Y$ is expanding on $\cD^X_{\alpha_1}$.
Note the first return time $t_p$ of a point $p\in\Sigma^*\setminus L_Y$ can be arbitrarily large if it is close enough to $L_Y$. In this case, the sectional expanding property \eqref{eq.sectional-expanding} will guarantee a large expansion rate for $TR^Y$.

\begin{Lemma}\label{lem.expanding-2}
There exists a neighborhood $N_L$ of $L$ in $\Sigma$ such that for any $Y\in\cU_1$ (reducing $\cU_1$ if necessary), one has $L_Y\subset N_L$ and for any $p\in (N_L\setminus L_Y)\cap \Sigma^*$, 	
\begin{equation}\label{eq.expanding-2}
  \|TR^Y(v)\|>3\|v\|,\quad \forall v\in \cD^X_{\alpha_1}(p)\setminus\{0\}.
\end{equation}
\end{Lemma}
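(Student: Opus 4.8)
The plan is to exploit two facts in tandem: the first return time $t_p$ of a point $p\in\Sigma^*\setminus L_Y$ tends to $+\infty$ as $p\to L_Y$, and along an orbit segment of length $t_p$ the sectional expanding estimate \eqref{eq.sectional-expanding} forces growth of areas at rate $e^{\gamma t_p/2}$. Converting this area growth into a one-dimensional expansion bound for $TR^Y$ that still grows like $e^{\gamma t_p/2}$, we then pick $N_L$ small enough that $t_p$ is so large that the expansion exceeds $3$.

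\textbf{Step 1 (return time blow-up).} Since $\sigma_Y$ is hyperbolic and its local invariant manifolds vary continuously with $Y$, after shrinking $\cU_1$ we may assume $L_Y=W^s_{loc}(\sigma_Y,Y)\cap\Sigma$ lies inside any prescribed neighborhood of $L$. A point $q\in L_Y$ satisfies $\phi^Y_t(q)\notin\Sigma$ for all $t>0$ and $\phi^Y_t(q)\to\sigma_Y$; hence for any $T_1>0$ the compact set $\{\phi^Y_t(q):2\le t\le T_1\}$ stays at a positive distance from $\Sigma$, and by compactness of $L$ and continuous dependence on $Y$ this distance can be taken uniform over $q\in L_Y$ and $Y\in\cU_1$. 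It follows that there is a neighborhood $N_L$ of $L$ in $\Sigma$ (and a further shrinking of $\cU_1$) with $L_Y\subset N_L$ for all $Y\in\cU_1$, such that every $p\in(N_L\setminus L_Y)\cap\Sigma^*$ has $t_p>T_1$. Hence it suffices to produce a uniform constant $C>0$ with $\|TR^Y(v)\|\ge C e^{\gamma t_p/2}\|v\|$ for all $Y\in\cU_1$, $p\in\Sigma^*\setminus L_Y$ and $v\in\cD^X_{\alpha_1}(p)\setminus\{0\}$, and then to fix $T_1$ so that $C e^{\gamma T_1/2}>3$.

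\textbf{Step 2 (sectional expansion along the flow--$\cN_2$ plane).} Let $p\in\Sigma^*=\Sigma\cap\overline{U_1}$ and $p'=R^Y(p)$. By Lemma \ref{lem.forward-inv}, $\hat\Phi^Y_{[0,t]}(l^Y_p)\subset\cB_0$ for all $t\ge 0$. Since $\cN_2\subset\cC_{\alpha_0}$, the $2$-plane $\cP:=l^Y_p\oplus\cN^Y_2(l^Y_p)$ is contained in $\wt\cC_{\alpha_0}(l^Y_p)$, so \eqref{eq.sectional-expanding} gives $|\det(\Phi^Y_{t_p}|_{\beta_*\cP})|>e^{\gamma t_p/2}$. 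The tangent flow maps $\langle Y(p)\rangle$ onto $\langle Y(p')\rangle$, and $\tilde\psi^Y_t$ is the orthogonal projection of $\Phi^Y_t$ to the normal bundle; evaluating the two-dimensional determinant in orthonormal frames adapted to the splittings $\langle Y(p)\rangle\oplus\cN_p$ and $\langle Y(p')\rangle\oplus\cN_{p'}$, with a unit generator $n$ of $\cN^Y_2(l^Y_p)\perp Y(p)$, yields
\[
\|\tilde\psi^Y_{t_p}|_{\cN^Y_2(l^Y_p)}\|=\big|\det(\Phi^Y_{t_p}|_{\beta_*\cP})\big|\cdot\frac{\|Y(p)\|}{\|Y(p')\|}>c_0\,e^{\gamma t_p/2},
\]
where $c_0>0$ is uniform: indeed $X$ is orthogonal to $\Sigma_0\times\{0\}\supset\Lambda\cap\Sigma$ with unit norm there by (P2) and \eqref{eq.vector-field}, so after shrinking $U_1$ and $\cU_1$ every $\|Y(q)\|$ with $q\in\Sigma^*$ is close to $1$.

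\textbf{Step 3 (from $\cN_2$ to the cone, and conclusion).} Fix $v\in\cD^X_{\alpha_1}(p)\setminus\{0\}$ and write $v=v^0+v_N$ with $v^0\in l^Y_p$, $v_N\in\cN_p$. Since $\cD^X_{\alpha_1}(p)\subset\cC^Y_{\alpha_0}(p)=\beta_*(\wt\cC_{\alpha_0}(l^Y_p))$, we have $v_N\in\cC_{\alpha_0}(l^Y_p)$, i.e. $v_N=v_N^1+v_N^2$ with $v_N^i\in\cN^Y_i(l^Y_p)$ and $\|v_N^1\|\le\alpha_0\|v_N^2\|$. Domination, $\|\tilde\psi^Y_t|_{\cN^Y_1}\|\cdot\|\tilde\psi^Y_{-t}|_{\cN^Y_2(l_t)}\|<1/2$, gives $\|\tilde\psi^Y_{t_p}(v_N^1)\|\le\frac{\alpha_0}{2}\|\tilde\psi^Y_{t_p}(v_N^2)\|$, whence, using $\dim\cN^Y_2=1$,
\[
\|\tilde\psi^Y_{t_p}(v_N)\|\ge\Big(1-\tfrac{\alpha_0}{2}\Big)\|v_N^2\|\cdot\|\tilde\psi^Y_{t_p}|_{\cN^Y_2(l^Y_p)}\|.
\]
The normal part of $\Phi^Y_{t_p}(v)$ is exactly $\tilde\psi^Y_{t_p}(v_N)$ (the $\langle Y(p')\rangle$-part contributes nothing), so $\|\Phi^Y_{t_p}(v)\|\ge\|\tilde\psi^Y_{t_p}(v_N)\|$; and $TR^Y(v)$ is the projection of $\Phi^Y_{t_p}(v)\in\cC^Y_{\alpha_0/2}(p')$ to $T_{p'}\Sigma$ along $\langle Y(p')\rangle$, so $\|TR^Y(v)\|$ is comparable to $\|\Phi^Y_{t_p}(v)\|$ because $\cC^Y_{\alpha_0/2}(p')$, $T_{p'}\Sigma$ and $\langle Y(p')\rangle$ are pairwise uniformly transverse (as $Y$ is almost orthogonal to $\Sigma$ on $\Sigma^*$). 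Finally $\|v_N^2\|$ is comparable to $\|v\|$ since $\cD^X_{\alpha_1}$ is uniformly transverse to the flow direction. Chaining these comparisons with Step 2 yields $\|TR^Y(v)\|\ge C e^{\gamma t_p/2}\|v\|$ for a uniform $C>0$, and Step 1 finishes the proof.

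\textbf{Main obstacle.} Steps 2 and 3 are essentially bookkeeping with cones, determinants and angles, all of whose ingredients are already in place. The genuinely delicate point is the uniform blow-up of the first return time in Step 1: one must verify, with constants independent of $Y\in\cU_1$, that orbits starting in $\Sigma^*$ very close to $L_Y$ are forced to spend an arbitrarily long time before hitting $\Sigma$ again. This relies on robustness of the hyperbolicity of $\sigma_Y$, continuity of $\phi^Y_t$, $\sigma_Y$ and $W^s_{loc}(\sigma_Y,Y)$ in $Y$, and a compactness argument over $L$ to make the estimate uniform.
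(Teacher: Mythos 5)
Your proposal is correct and follows the same strategy as the paper's (much terser) proof: the first-return time blows up uniformly near $L_Y$, and the sectional expansion estimate \eqref{eq.sectional-expanding} converts into expansion of $TR^Y$ at rate $\asymp e^{\gamma t_p/2}$, so one just takes $N_L$ small enough. One small streamlining worth noting: in Steps 2--3 you can apply \eqref{eq.sectional-expanding} directly to the plane $\cP=\mathrm{span}\{v,Y(p)\}$ (which lies in $\wt\cC_{\alpha_0}(l^Y_p)$), obtaining $\|\tilde\psi^Y_{t_p}(v_N)\|=|\det(\Phi^Y_{t_p}|_{\cP})|\cdot\frac{\|Y(p)\|}{\|Y(p')\|}\cdot\|v_N\|$ at once; this sidesteps your use of the domination inequality for the splitting $\cN^Y_1\oplus\cN^Y_2$ on $\cB_0$, which is only a continuous extension of the splitting on $\cF(\Lambda)$ and need not be $\tilde\psi^Y_t$-invariant (the cone invariance of Lemma \ref{lem.cone-0} is what one actually has).
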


\begin{proof}
    By Lemma \ref{lem.forward-inv} and the sectional expanding property \eqref{eq.sectional-expanding}, there exists $t_0>0$ such that for any $Y\in\cU_1$ and any $p\in \Sigma^*\setminus L_Y$, if the first return time $t_p>t_0$, then equation \eqref{eq.expanding-2} holds.
    By continuity of local stable manifold, for any neighborhood $N_L$ of $L$ in $\Sigma$ there exists a $C^1$ neighborhood $\cU$ of $X$ such that for any $Y\in\cU$, the intersection $L_Y=W^s_{loc}(\sigma_Y,Y)\cap \Sigma$ is contained in $N_L$. Therefore, by reducing $\cU_1$, we can assume that this property holds for $Y\in\cU_1$ and $N_L$ is small enough such that for any $p\in (N_L\setminus L_Y)\cap \Sigma^*$, the first return time $t_p$ is larger than $t_0$. Hence equation \eqref{eq.expanding-2} holds.
\end{proof}

\begin{Definition}[$cu$-curve]
  A $C^1$ curve $J$ on $\Sigma$ is called a {\em $cu$-curve} if it is contained in $\Sigma^*$ and is tangent to the cone $\cD^X_{\alpha_1}$.
\end{Definition}

\begin{Lemma}\label{lem.cu-curve0}
  For any $Y\in\cU_1$ and any $cu$-curve $J$ such that $J\cap L_Y=\emptyset$, the image $R^Y(J)$ remains a $cu$-curve.
\end{Lemma}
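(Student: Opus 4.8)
The plan is to verify, for $R^Y(J)$, the two defining requirements of a $cu$-curve: that it is a $C^1$ curve contained in $\Sigma^*$, and that it is tangent to the cone field $\cD^X_{\alpha_1}$. The essential input is the cone invariance and expansion already established in Lemma~\ref{lem.expanding}; everything else is bookkeeping with the forward invariance of the attracting region $\overline{U_1}$.

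First I would observe that since $J\subset\Sigma^*$ and $J\cap L_Y=\emptyset$, the curve $J$ lies in $\Sigma\setminus L_Y$, the domain where the first return map $R^Y$ is $C^1$; hence $R^Y|_J$ is $C^1$. Write $J$ as the image of a $C^1$ immersion $\ga\colon[a,b]\to\Sigma^*$ with $\ga'(t)\in\cD^X_{\alpha_1}(\ga(t))\setminus\{0\}$ for all $t$ (this is what it means for $J$ to be tangent to $\cD^X_{\alpha_1}$; note that $\cD^X_{\alpha_1}(\ga(t))$ is well defined since $l^Y_{\ga(t)}\in\cB_0$ by Lemma~\ref{lem.forward-inv}). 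The chain rule gives $(R^Y\circ\ga)'(t)=TR^Y(\ga'(t))$, and by Lemma~\ref{lem.expanding} we have $TR^Y(\ga'(t))\in\cD^X_{\alpha_1}(R^Y(\ga(t)))$ together with $\|TR^Y(\ga'(t))\|\geq\la\|\ga'(t)\|>0$. Thus $R^Y\circ\ga$ is again a $C^1$ immersion, its image $R^Y(J)$ is a $C^1$ regular curve, and this curve is tangent to $\cD^X_{\alpha_1}$.

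It remains to check that $R^Y(J)\subset\Sigma^*=\Sigma\cap\overline{U_1}$. For $p\in J$, property (C1) in Section~\ref{sect.skew-product} gives that the first return time satisfies $t_p>2$, so $R^Y(p)=\phi^Y_{t_p}(p)$. Since $U_1$ is an attracting neighborhood of $\Lambda$ with $\phi^Y_1(\overline{U_1})\subset U_1$ (Lemma~\ref{lem.cone-1}), one has $\phi^Y_t(\overline{U_1})\subset U_1$ for every $t\geq 1$; as $p\in\overline{U_1}$ and $t_p>1$, this yields $R^Y(p)\in U_1\subset\overline{U_1}$. Since also $R^Y(p)\in\Sigma$, we conclude $R^Y(p)\in\Sigma^*$, whence $R^Y(J)\subset\Sigma^*$ and $R^Y(J)$ is a $cu$-curve.

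There is no genuine obstacle here: the statement is a convenient repackaging of Lemma~\ref{lem.expanding} together with the flow-invariance of $\overline{U_1}$. The only point worth a word of care is that the image be a \emph{regular} $C^1$ curve rather than merely the continuous image of one, and this is guaranteed precisely by the strict expansion bound $\|TR^Y(v)\|\geq\la\|v\|$ of Lemma~\ref{lem.expanding}.
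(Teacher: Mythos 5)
Your proposal is correct and follows essentially the same route as the paper: both proofs are, at bottom, applications of Lemma~\ref{lem.expanding} for the cone invariance, combined with forward invariance of $\overline{U_1}$ to keep the image in $\Sigma^*$. The paper phrases the argument slightly more geometrically by flowing a small piece $J_p$ to get a local $2$-dimensional surface $\Gamma$ tangent to $\cC^Y_{\alpha_0}$ and intersecting it with $\Sigma$, whereas you apply $TR^Y$ directly via the chain rule; these are two ways of saying the same thing, and your note that the strict expansion bound guarantees $R^Y\circ\gamma$ is an immersion is a sensible extra observation.
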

\begin{proof}
  Since $J\cap L_Y=\emptyset$, for any $p\in J$, there is a neighborhood $J_p\subset J$ of $p$ and a constant $\delta>0$ such that the set $\Gamma=\bigcup_{|t-t_p|\leq \delta}\phi^Y_t(J_p)$ is a 2-dimensional submanifold tangent to the cone $\cC^Y_{\alpha_0}$, transverse to $\Sigma$. The intersection $\Gamma\cap \Sigma$ is contained in $\Sigma^*$ and by Lemma \ref{lem.expanding}, is a $cu$-curve. This implies that $R^Y(J)$ is a $cu$-curve.
\end{proof}

Assuming $\cU_1$ small enough, the following result holds.

\begin{Lemma}\label{lem.cu-curve2}
	There exists $\vep_0>0$ such that for any $Y\in\cU_1$ and any $cu$-curve $J$, there exists $k>0$ such that $\bigcup_{n=0}^{k-1} (R^Y)^n(J)$ contains a $cu$-curve with length larger than $\vep_0$.
\end{Lemma}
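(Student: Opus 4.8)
The plan is to extract, from an arbitrary $cu$-curve $J$, a sequence of $cu$-curves $J_0=J,J_1,J_2,\dots$, each contained in an iterate $(R^Y)^n(J)$, whose lengths grow by a fixed factor $c>1$ at every step; then after finitely many steps the length must exceed a uniform constant $\vep_0$, and that curve witnesses the conclusion.

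First I would fix $\vep_0$. Let $N_L$ be the neighbourhood of $L$ given by Lemma \ref{lem.expanding-2}; shrinking $\cU_1$ one may assume $L_Y$ lies in a neighbourhood $N_0$ of $L$ with $\overline{N_0}\subset N_L$, and set $d_0=\mathrm{dist}_\Sigma(\overline{N_0},\Sigma\setminus N_L)>0$, a quantity uniform over $Y\in\cU_1$. Choose $0<\vep_0<d_0$. The point of this choice is that any $cu$-curve $J'$ with $\len(J')\le\vep_0$ which meets $L_Y$ at a point $q$ satisfies $J'\subset N_L$ (every point of $J'$ is at intrinsic distance $<d_0$ from $q\in N_0$), hence $J'\setminus L_Y\subset(N_L\setminus L_Y)\cap\Sigma^*$. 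I would also record the purely geometric fact that every $cu$-curve meets $L_Y$ in at most one point: by the construction the stable foliation $\cF^s$, and hence the disk $L=L_0\times I$, is uniformly transverse to the cone field (see (P3) and (P6)), so $\cD^X_{\alpha_1}$ is uniformly transverse to $L$, and after shrinking $\cU_1$ also to $L_Y$; concretely one can pick a $C^1$ function $h_Y$ on $\Sigma^*$ with $\{h_Y=0\}=L_Y\cap\Sigma^*$ whose differential does not vanish on $\cD^X_{\alpha_1}$, so that $h_Y$ is strictly monotone along every $cu$-curve and the curve meets $\{h_Y=0\}$ at most once.

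Then I would run the recursion. Set $J_0=J$, $\ell_i=\len(J_i)$, and given a $cu$-curve $J_i$: (i) if $\ell_i>\vep_0$, stop; (ii) if $\ell_i\le\vep_0$ and $J_i\cap L_Y=\emptyset$, put $J_{i+1}=R^Y(J_i)$, which is a $cu$-curve by Lemma \ref{lem.cu-curve0} and has $\ell_{i+1}\ge\lambda\ell_i$ by Lemma \ref{lem.expanding}; (iii) if $\ell_i\le\vep_0$ and $J_i\cap L_Y\ne\emptyset$, then by the previous step $J_i$ meets $L_Y$ at a single point, so $J_i\setminus L_Y$ has at most two components; let $K$ be the longer one, a $cu$-curve disjoint from $L_Y$ with $\len(K)\ge\ell_i/2$, and note $K\subset(N_L\setminus L_Y)\cap\Sigma^*$ by the choice of $\vep_0$, so $J_{i+1}:=R^Y(K)$ is a $cu$-curve by Lemma \ref{lem.cu-curve0} and, applying the enhanced expansion of Lemma \ref{lem.expanding-2} along $K$, $\ell_{i+1}=\len(R^Y(K))>3\len(K)\ge\tfrac32\ell_i$. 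In cases (ii) and (iii) one has $J_{i+1}\subset R^Y(J_i)$, hence inductively $J_i\subset(R^Y)^i(J)$, and $\ell_{i+1}\ge c\,\ell_i$ with $c=\min\{\lambda,3/2\}>1$. The recursion must therefore stop, for otherwise $\ell_i\ge c^i\ell_0\to\infty$ while $\ell_i\le\vep_0$ for every $i$; at the terminal index $i$ the $cu$-curve $J_i\subset(R^Y)^i(J)\subset\bigcup_{n=0}^{k-1}(R^Y)^n(J)$ (with $k=i+1$) has length larger than $\vep_0$. Since $\vep_0$ and $c$ depend only on the construction and on $\cU_1$, this is the desired uniform constant.

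The hard part is the breaking step (iii): one must guarantee that a short $cu$-curve meeting $L_Y$ is trapped inside the region $N_L$ where Lemma \ref{lem.expanding-2} supplies the expansion factor $3$, and that it crosses $L_Y$ only once, so that the longer surviving component still carries at least half the length and is genuinely lengthened by the return map — an expansion rate only slightly above $1$ would not survive the halving. This is exactly why $\vep_0$ has to be chosen after $N_L$ has been fixed, and it rests on the uniform transversality of the stable foliation to the cone field built into the Lorenz construction. Once these two geometric inputs are secured, the remaining growth-forces-length argument is routine bookkeeping.
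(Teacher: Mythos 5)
Your proof is correct and follows essentially the same strategy as the paper's: iterate $R^Y$, use Lemma \ref{lem.expanding} for $\lambda$-expansion while the curve misses $L_Y$, and when it meets $L_Y$ use the choice of $\vep_0$ to trap the (short) curve inside $N_L$, cut at the unique intersection with $L_Y$, keep the longer half, and apply Lemma \ref{lem.expanding-2} to recover a net factor $\geq 3/2$, so lengths grow geometrically until they exceed $\vep_0$. Your version is merely a bit more explicit about the transversality argument that a $cu$-curve meets $L_Y$ at most once, and about handling an initial $J$ that already meets $L_Y$; these are minor clarifications of the paper's proof rather than a different route.
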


\begin{proof}
Assume that the $C^1$ neighborhood $\cU_1$ of $X$ is small enough such that $L_Y$ is $C^1$ close to $L$ and transverse to $\cD^X_{\alpha_1}$ with uniform angle. Moreover, $N_L$ contains an $\varepsilon$-neighborhood of $L_Y$ in $\Sigma$, where $\vep>0$ is uniform for $Y\in\cU_1$. Then there exists $\vep_0\in(0,\vep)$ such that any $cu$-curve with length smaller than $\varepsilon_0$ intersects $L_Y$ at most once, and moreover, when it does intersect $L^Y$ then it is contained in $N_L$.

Let $J$ be any $cu$-curve that does not intersect $L_Y$. By Lemma \ref{lem.cu-curve0}, $R^Y(J)$ remains a $cu$-curve. Moreover, Lemma \ref{lem.expanding} shows that $\len(R^Y(J))>\lambda\len(J)$, where $\len(\cdot)$ denotes the length of a $C^1$ curve. By iteration, the length of $(R^Y)^n(J)$ keeps growing if it does not intersect $L_Y$. When $(R^Y)^n(J)$ does intersect $L_Y$, then either $(R^Y)^n(J)$ has length larger than $\varepsilon_0$, or it is contained in $N_L$ and is cut by $L_Y$ into two pieces. In the latter case, let $J'$ be the longer piece. Then Lemma \ref{lem.expanding-2} implies that $R^Y(J')$ has length larger than $3/2$ times the length of $(R^Y)^n(J)$. The conclusion of the lemma holds by induction.
\end{proof}

\subsection{Proof of Proposition \ref{prop.ls-class}}\label{sect.pf-prop-ls}

We now fix the neighborhoods $U_1$ and $\cU_1$ small enough so that all the results on Section \ref{sect.forward-inv} and Section \ref{sect.inv-cone} hold.

Recall that there is a periodic orbit $Q$ in $\Lambda$ with stable index 2 such that its stable manifold is dense in $U_{\Lambda}$, see (C2) in Section \ref{sect.skew-product}. In particular, the stable manifold of $Q$ intersects $\Sigma$ along a dense family of 2-dimensional $C^1$ disks of the form $\cF^s(x)\times I$. As is easy to see that the foliation $\cF^s\times I$ is transverse to the bundle $E^{cu}$, we can assume that $\alpha_0$ is small such that $\cF^s\times I$ is transverse to the cone $\cD^X_{\alpha_0}$. This implies, in particular, that any $cu$-curve is transverse to the foliation $\cF^s\times I$ and the angle between them is uniformly bounded below.
By continuity of local stable manifold, there exists a $C^1$ neighborhood $\cU_2$ of $X$ such that for any $Y\in\cU_2$, any $cu$-curve with length $\vep_0$ intersects the stable manifold $W^s(Q_Y,Y)$, where $\vep_0>0$ is given by Lemma \ref{lem.cu-curve2} and $Q_Y$ is the continuation of $Q$.

\begin{Lemma}\label{lem.unique-ls-class}
  For any $Y\in\cU_1\cap\cU_2$, for any $\phi^Y_t$-invariant compact set $\Gamma\subset U_1$ such that $\Gamma\cap\Sing(Y)=\emptyset$, it holds $W^u(\Gamma,Y)\cap W^s(Q_Y,Y)\neq\emptyset$ and $W^u(\Gamma,Y)\cap W^s(\sigma_Y,Y)\neq\emptyset$.
\end{Lemma}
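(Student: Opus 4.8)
The plan is to produce a $cu$-curve inside $W^u(\Gamma,Y)$ and then iterate it by the first return map $R^Y$: for the first conclusion until it becomes long enough to be caught by $W^s(Q_Y,Y)$, and for the second until it is forced to cross $L_Y\subset W^s(\sigma_Y,Y)$.

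The first and main step is to find a $cu$-curve contained in $W^u(\Gamma,Y)$. Since $\Gamma$ is compact, $\phi^Y_t$-invariant and disjoint from $\Sing(Y)$, it stays at a fixed positive distance from $\sigma_Y$; by the construction of $X$ out of the geometric Lorenz model and the robustness of this feature, every $Y$-orbit in $U_1$ that remains bounded away from $\sigma_Y$ meets the cross-section $\Sigma$, so in particular $\orb(x)\cap\Sigma\neq\emptyset$ for every $x\in\Gamma$. Fix $x\in\Gamma$ and $x'\in\orb(x)\cap\Sigma$. By Corollary \ref{cor.sect-exp}, $\cN_{\Gamma}$ has a dominated splitting $\cN_1\oplus\cN_2$ with $\cN_2$ one-dimensional and expanded; by standard invariant-manifold theory (\cite{HPS}) the orbit $\orb(x)$ then carries a two-dimensional unstable manifold $W^u(\orb(x),Y)\subset W^u(\Gamma,Y)$, tangent along $\orb(x)$ to $\langle Y\rangle\oplus\cN_2$, and $W^u(\orb(x),Y)\subset\overline{U_1}$ because $U_1$ is an attracting region for $Y$. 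Shrinking $\cU_1$ and $U_1$ if necessary, the $Y$-continuation of the splitting over $\cF(\Gamma)\subset\cB_0$ is so close to the splitting over $\cF(\Lambda)$ that $\cN_2$ lies in the cone $\cC_{\alpha_0/2}$; hence $(\langle Y(x')\rangle\oplus\cN_2(x'))\cap T_{x'}\Sigma\subset\cD^Y_{\alpha_0/2}(x')\subset\cD^X_{\alpha_1}(x')$, using the inclusion $\cD^Y_{\alpha_0/2}\subset\cD^X_{\alpha_1}$ from the proof of Lemma \ref{lem.expanding}. Therefore a sufficiently short sub-arc through $x'$ of the component of $W^u(\orb(x),Y)\cap\Sigma$ containing $x'$ is a $cu$-curve $J\subset W^u(\Gamma,Y)$.

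Now both conclusions follow by iteration. Since $W^u(\Gamma,Y)$ is $\phi^Y_t$-invariant, $(R^Y)^n(J)\subset W^u(\Gamma,Y)$ whenever it is defined. For the first conclusion, Lemma \ref{lem.cu-curve2} gives $k>0$ such that $\bigcup_{n=0}^{k-1}(R^Y)^n(J)$ contains a $cu$-curve of length larger than $\vep_0$; a length-$\vep_0$ sub-arc of it is a $cu$-curve inside $W^u(\Gamma,Y)$, which meets $W^s(Q_Y,Y)$ by the choice of $\cU_2$, so $W^u(\Gamma,Y)\cap W^s(Q_Y,Y)\neq\emptyset$. For the second, observe first that every $cu$-curve has length at most some constant $\ell_0$: after shrinking $U_1$, the cone field $\cD^X_{\alpha_1}$ over $\Sigma^*$ is contained in a narrow cone around the unstable direction of the Lorenz cross-section $\Sigma_0$, so every $cu$-curve is a monotone graph over a sub-interval of that bounded coordinate. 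Now iterate $J$: if $(R^Y)^n(J)\cap L_Y=\emptyset$ for all $n\geq 0$, then by Lemma \ref{lem.cu-curve0} each $(R^Y)^n(J)$ is a $cu$-curve and by Lemma \ref{lem.expanding} its length is at least $\lambda^n\len(J)$, which exceeds $\ell_0$ for $n$ large — a contradiction. Hence $(R^Y)^n(J)\cap L_Y\neq\emptyset$ for some $n$, and since $L_Y\subset W^s_{loc}(\sigma_Y,Y)\subset W^s(\sigma_Y,Y)$ and $(R^Y)^n(J)\subset W^u(\Gamma,Y)$, this yields $W^u(\Gamma,Y)\cap W^s(\sigma_Y,Y)\neq\emptyset$.

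The step I expect to be the main obstacle is the first one: verifying that every orbit of $\Gamma$ meets $\Sigma$ (relying on the geometric-model structure of $X$, its $C^1$-robustness, and $\sigma_Y\notin\Gamma$), and that the two-dimensional unstable manifold furnished by Corollary \ref{cor.sect-exp} meets $\Sigma$ along a curve tangent to $\cD^X_{\alpha_1}$. The uniform length bound $\ell_0$ on $cu$-curves is routine, but it is exactly what converts ``lengths grow geometrically'' into ``the curve must cross $L_Y$'', which is what the second conclusion requires.
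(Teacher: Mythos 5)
Your proposal is correct and follows essentially the same route as the paper's proof: apply Corollary \ref{cor.sect-exp} to get a one-dimensional expanding normal bundle $\cN_2$ over $\Gamma$, produce a $cu$-curve $J\subset W^u(\Gamma,Y)\cap\Sigma$ from the unstable manifold, grow it with Lemma \ref{lem.cu-curve2} to length $\vep_0$ (catching $W^s(Q_Y,Y)$ by choice of $\cU_2$), and obtain an intersection with $L_Y$ by the uniform upper bound on $cu$-curve length combined with the $\lambda$-expansion of Lemma \ref{lem.expanding}. The only place you differ from the paper is cosmetic — you spell out that every $\Gamma$-orbit meets $\Sigma$ and that the length bound comes from the monotone-graph property — both of which the paper leaves implicit behind its remark that $\cD^X_{\alpha_1}$ is transverse to $\cF^s\times I$.
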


\begin{proof}
  By Corollary \ref{cor.sect-exp}, there is a dominated splitting $\cN_{\Gamma}=\cN_1\oplus \cN_2$ such that $\cN_2$ is one-dimensional and expanding. This implies that $\Gamma$ has an unstable manifold tangent to $\langle Y\rangle\oplus\cN_2$. By Lemma \ref{lem.cone-0}, the cone $\cD^X_{\alpha_1}$ is invariant under $TR^Y$, which implies that $(l^Y_p\oplus \cN_2(p))\cap T_p\Sigma\subset \cD^X_{\alpha_1}(p)$ for any $p\in \Gamma\cap \Sigma$. Then the intersection of the unstable manifold of $\orb(p)$ with $\Sigma$ contains a $cu$-curve $J$. It follows from Lemma \ref{lem.cu-curve2} that the length of $J$ can be assumed to be larger than $\vep_0$. Since $Y\in\cU_2$, $J\cap W^s(Q_Y,Y)\neq\emptyset$. Hence $W^u(\Gamma,Y)\cap W^s(Q_Y,Y)\neq\emptyset$.

  For the proof of $W^u(\Gamma,Y)\cap W^s(\sigma_Y,Y)\neq\emptyset$, we need only show that $W^u(\Gamma,Y)\cap L_Y\neq\emptyset$. Assume that $J$ does not intersect $L_Y$, then by invariance of the cone $\cD^X_{\alpha_1}$ and Lemma \ref{lem.expanding}, the iterate $R^Y(J)$ remains a $cu$-curve with length larger than $\lambda \len(J)$. By iteration, the length of $(R^Y)^n(J)$ keeps growing if the first $n-1$ iterates does not intersects with $L_Y$. As the cone $\cD^X_{\alpha_1}$ is transverse to $\cF^s\times I$, there exists a finite upper bound for the length of $cu$-curves. Hence there exists $n>0$ such that $(R^Y)^n(J)\cap L_Y\neq\emptyset$. Therefore, $W^u(\Gamma,Y)\cap L_Y\neq\emptyset$.
\end{proof}

\begin{Corollary}\label{cor.unique-ls-class}
  For any $Y\in \cU_1\cap\cU_2$, the chain recurrence class $C(\sigma_Y,Y)$ is the only Lyapunov stable one contained in $U_1$.
\end{Corollary}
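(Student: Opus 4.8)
The plan is to derive the corollary from Lemma \ref{lem.unique-ls-class} together with two soft facts from Conley theory. The first is that a Lyapunov stable compact invariant set $\Gamma$ absorbs its unstable set: if $y$ satisfies $d(\phi^Y_{-t}(y),\Gamma)\to 0$ as $t\to+\infty$, then given any neighborhood $U$ of $\Gamma$, choose a neighborhood $V$ of $\Gamma$ with $\phi^Y_s(V)\subset U$ for all $s\ge 0$; for $t$ large $\phi^Y_{-t}(y)\in V$, so $y=\phi^Y_t(\phi^Y_{-t}(y))\in U$, and letting $U$ shrink to $\Gamma$ gives $y\in\Gamma$. Thus $W^u(\Gamma,Y)\subset\Gamma$ whenever $\Gamma$ is a Lyapunov stable chain recurrence class.

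Next I would prove the uniqueness statement. Fix $Y\in\cU_1\cap\cU_2$; after shrinking $U_1$ we may assume $\si_Y$ is the only singularity of $Y$ in $U_1$. Let $C\subset U_1$ be a Lyapunov stable chain recurrence class and suppose for contradiction that $C\ne C(\si_Y,Y)$, so that $\si_Y\notin C$ and hence $C$ is a $\phi^Y_t$-invariant compact set with $C\cap\Sing(Y)=\emptyset$. Lemma \ref{lem.unique-ls-class}, applied with $\Gamma=C$, produces a point $z\in W^u(C,Y)\cap W^s(\si_Y,Y)$. By the absorption fact $z\in C$; on the other hand $z\in W^s(\si_Y,Y)$ gives $\phi^Y_t(z)\to\si_Y$ as $t\to+\infty$, so by compactness and invariance of $C$ we get $\si_Y\in C$, a contradiction. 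Therefore every Lyapunov stable chain recurrence class contained in $U_1$ contains $\si_Y$, hence equals $C(\si_Y,Y)$.

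Finally I would confirm that $C(\si_Y,Y)$ is itself Lyapunov stable and lies in $U_1$. By Lemma \ref{lem.cone-1}, $U_1$ is an attracting region for every $Y\in\cU_1$, so $A_Y:=\bigcap_{t\ge 0}\phi^Y_t(\overline{U_1})$ is an attractor; since $\si_Y\in U_1$ is a fixed point it belongs to $A_Y$, and the chain recurrence class $C(\si_Y,Y)$, being chain transitive and invariant, is contained in $A_Y\subset U_1$. An attractor always contains a Lyapunov stable chain recurrence class (a quasi-attractor), e.g.\ a class terminal for the Conley preorder on the chain recurrence classes of $Y$ inside $A_Y$; by the previous paragraph such a class coincides with $C(\si_Y,Y)$, so $C(\si_Y,Y)$ is Lyapunov stable. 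Combined with the uniqueness just shown, this is the corollary. The whole argument is routine once Lemma \ref{lem.unique-ls-class} is in hand — that lemma, established through the $cu$-cone analysis on the cross-section $\Sigma$, is the substantive ingredient, and the only care needed here is to invoke the two soft facts (Lyapunov stability forces absorption of the unstable manifold; an attracting region carries a quasi-attractor) correctly.
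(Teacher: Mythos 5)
Your proof is correct and follows essentially the same route as the paper's: apply Lemma \ref{lem.unique-ls-class} to a hypothetical Lyapunov stable class $C\ne C(\si_Y,Y)$, intersect $W^u(C,Y)$ with $W^s(\si_Y,Y)$, and use Lyapunov stability of $C$ to force $\si_Y\in C$, a contradiction. You spell out two points that the paper leaves implicit: (a) the ``absorption'' fact that Lyapunov stability of a closed invariant set $\Gamma$ gives $W^u(\Gamma,Y)\subset\Gamma$ (the paper simply writes $\si_Y\in\Cl(W^u(C_Y,Y))\subset C_Y$ without elaboration), and (b) the existence side of the claim, namely that $C(\si_Y,Y)$ is itself Lyapunov stable and lies in $U_1$, which you obtain from $U_1$ being an attracting region together with the standard Conley-theoretic fact that an attractor contains a quasi-attractor. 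Making (b) explicit is a genuine (if minor) improvement in completeness, since the corollary's phrasing asserts $C(\si_Y,Y)$ is Lyapunov stable and the paper's proof only establishes uniqueness; but it does not change the essential mechanism of the argument.
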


\begin{proof}
  Let $C_Y$ be any Lyapunov stable chain recurrence class of $Y$ contained in $U_1$. Suppose that $C_Y$ does not contain the singularity $\sigma_Y$. Then $C_Y$ is non-singular. By Lemma \ref{lem.unique-ls-class}, we have $W^u(C_Y,Y)\cap W^s(\sigma_Y,Y)\neq\emptyset$.
  By Lyapunov stability of the chain recurrence class $C_Y$, one would obtain that $\sigma_Y\in \Cl(W^u(C_Y,Y))\subset C_Y$, a contradiction. Thus, $C_Y$ contains $\sigma_Y$. It follows that $C(\sigma_Y,Y)$ is the only Lyapunov stable chain recurrence class in $U_1$.
\end{proof}

Let $Y\in \cU_1\cap \cU_2$, we need to show that $Q_Y$ is contained in the Lyapunov stable chain recurrence class $C(\sigma_Y,Y)$.
Let us recall the following result in \cite{GY}.
\begin{Lemma}[{\cite[Proposition 4.9]{GY}}]\label{lem.mixingdominated}
	Let $\Delta$ be a compact invariant set of $Y\in \xX^1(M)$ verifying the following properties:
	\begin{itemize}
		\item $\Delta\setminus{\rm Sing}(Y)$ admits a dominated splitting
			$\cN_{\Delta}=G^{cs}\oplus G^{cu}$ in the normal bundle w.r.t. the linear Poincar\'e flow $\psi_t$, with index $i$.
		\item Every singularity $\rho\in\Delta$ is hyperbolic and $\ind(\rho)>i$. Moreover, $T_\rho M$ admits a partially hyperbolic splitting $T_{\rho}M=F^{ss}\oplus F^{cu}$ with respect to the tangent flow, where $\dim F^{ss}=i$ and for the corresponding strong stable manifolds $W^{ss}(\rho)$, one has $W^{ss}(\rho)\cap \Delta=\{\rho\}$.
		\item For every $x\in\Delta$, one has $\omega(x)\cap {\rm Sing}(Y)\neq\emptyset$.
	\end{itemize}
	Then either $\Delta$ admits a partially hyperbolic splitting $T_\Delta M=E^{ss}\oplus F$ with respect to the tangent flow $\Phi^Y_t$, where $\dim E^{ss}=i$, or $\Delta$ intersects the homoclinic class $H(\gamma)$ of a hyperbolic periodic orbit $\gamma$ of index $i$.
\end{Lemma}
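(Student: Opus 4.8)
The plan is to prove the dichotomy by distinguishing according to whether the subbundle $G^{cs}$ is uniformly contracted by the linear Poincar\'e flow $\psi_t$ over $\Delta\setminus\Sing(Y)$, handling the orbit segments that pass close to the singularities by working with the extended linear Poincar\'e flow (suitably rescaled near $\Sing(Y)$), for which the hypotheses on the singularities are exactly what is needed. Note first that the assumption $\omega(x)\cap\Sing(Y)\neq\emptyset$ for every $x\in\Delta$ already excludes the possibility that $\Delta$ is uniformly hyperbolic, so ``partially hyperbolic'' is the strongest structure one can hope to obtain.

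Suppose first that $G^{cs}$ is uniformly contracted by $\psi_t$. Then I would pass to the extended normal bundle $\wt{\cN}_{\cF(\Delta)}$, on which the splitting $G^{cs}\oplus G^{cu}$ extends to a dominated splitting $\cN_1\oplus\cN_2$ of index $i$ with $\cN_1$ contracted by $\tilde\psi_t$ (as in Remark \ref{rem.normal-bundle} and Lemma \ref{lem.dom-equivalence}). For each singularity $\rho$, the hypotheses $\ind(\rho)>i$, $\dim F^{ss}_\rho=i$ and $W^{ss}(\rho)\cap\Delta=\{\rho\}$ give, by the argument of Lemma \ref{lem.loren-like-singularity}, that $\cF(\Delta)\cap T_\rho M\subset F^{cu}_\rho$; since the splitting of $\wt{\cN}_{\cF(\Delta)}$ has index $i=\dim F^{ss}_\rho$, uniqueness of dominated splittings forces $\beta_*(l\oplus\cN_2(l))=F^{cu}_\rho$ for every $l\in\cF(\Delta)$ lying over $\rho$. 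Combined with the uniform contraction of $\cN_1$, this is precisely the situation in which one reassembles a continuous $\Phi^Y_t$-invariant splitting $T_\Delta M=E^{ss}\oplus F$ with $\dim E^{ss}=i$ and $E^{ss}$ uniformly contracted (cf. the reconstruction arguments in \cite{LGW}); this is the first alternative.

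Suppose instead that $G^{cs}$ is not uniformly contracted by $\psi_t$. Then there are orbit segments $\phi^Y_{[0,t_n]}(x_n)$ with $t_n\to\infty$ along which $\|\psi_t|_{G^{cs}}\|$ fails to contract exponentially, and I would feed these into Liao's selecting (sifting) lemma to extract ``almost closed'' orbit segments that are $\lambda$-quasi-contracting with average rate close to zero along $G^{cs}$; here the hypothesis $\omega(x)\cap\Sing(Y)\neq\emptyset$, together with the partial hyperbolicity of each singularity and $W^{ss}(\rho)\cap\Delta=\{\rho\}$, is used to keep the selected segments in a region where the rescaled linear Poincar\'e flow remains controlled during the passages near $\Sing(Y)$. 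Liao's shadowing/closing lemma then produces genuine hyperbolic periodic orbits $\gamma_n$ of $Y$ converging in the Hausdorff topology into $\Delta$. Robustness of domination transports the index-$i$ dominated splitting to $\cN_{\gamma_n}$, and because the weak-stable rate along the $i$-dimensional part is close to zero, domination forces the complementary part to be uniformly expanding, so $\gamma_n$ has stable index exactly $i$. Since $\gamma_n\to\Delta$ in the Hausdorff topology and $\ind(\gamma_n)=i$, for $n$ large the homoclinic class $H(\gamma_n)$ meets $\Delta$, which is the second alternative.

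The main obstacle is the second case: one has to run Liao's selecting and closing lemmas uniformly in the presence of the singular set, either keeping the selected segments a definite distance from $\Sing(Y)$ or controlling the distortion of the linear Poincar\'e flow during near-singular passages through a rescaling — and it is at exactly this point that the hypotheses $\ind(\rho)>i$, $\dim F^{ss}_\rho=i$, $W^{ss}(\rho)\cap\Delta=\{\rho\}$ and $\omega(x)\cap\Sing(Y)\neq\emptyset$ are indispensable.
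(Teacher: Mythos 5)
The paper does not prove this lemma; it is imported verbatim as \cite[Proposition~4.9]{GY}, so there is no internal proof against which to compare. With that caveat, your reconstruction of the likely argument is reasonable in outline but has a genuine gap at the end of the second case.

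The dichotomy ``$G^{cs}$ uniformly contracted by $\psi_t$ versus not'' is indeed the natural place to start, and your Case~1 (uniform contraction) is a fair sketch: extend the splitting to $\wt{\cN}_{\cF(\Delta)}$, use the Lorenz-like hypotheses and $W^{ss}(\rho)\cap\Delta=\{\rho\}$ to see that $\cF(\Delta)$ over singularities lives in $F^{cu}_\rho$, and then reassemble $E^{ss}\oplus F$ on $T_\Delta M$ in the spirit of \cite{LGW}. (Strictly speaking you also need to argue that the reassembled $E^{ss}$ really is dominated by $F$ and not merely complementary; the uniform contraction of $\cN_1$ together with the dominated splitting at each $\rho$ does the job, but it should be said.)

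The gap is in Case~2. You produce, via Liao's selecting and shadowing lemmas, hyperbolic periodic orbits $\gamma_n$ of $Y$ of index $i$ that Hausdorff-converge into $\Delta$, and then assert ``for $n$ large the homoclinic class $H(\gamma_n)$ meets $\Delta$.'' This does not follow from Hausdorff convergence. A periodic orbit can be arbitrarily close to $\Delta$ while its homoclinic class is disjoint from $\Delta$: nothing you have said places a point of $\Delta$ on a transverse intersection of $W^s(\gamma_n)$ and $W^u(\gamma_n)$. What is needed is an argument that the shadowed quasi-hyperbolic orbit segment in $\Delta$ actually produces such an intersection point in $\Delta$ — e.g.\ using the local product structure coming from the index-$i$ dominated splitting and the uniform size of $W^{s}_{\rm loc}(\gamma_n)$, $W^{u}_{\rm loc}(\gamma_n)$, to show that the shadowed point of $\Delta$ (or a well-chosen nearby point of $\Delta$) lies in $H(\gamma_n)$. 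Without that step the dichotomy is not established. Note also that the hypothesis $\omega(x)\cap\Sing(Y)\neq\emptyset$ for all $x\in\Delta$ already forbids periodic orbits inside $\Delta$, so one cannot hope to place $\gamma_n$ itself in $\Delta$; the intersection with $H(\gamma_n)$ has to be obtained indirectly, precisely along the lines just described.

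One further small caution in Case~2: from ``the contraction rate along the $i$-dimensional part is close to zero'' plus domination you infer that the complementary part expands, hence $\ind(\gamma_n)=i$. This is fine provided the selecting lemma is formulated so that the weak direction is genuinely weakly \emph{contracting} (not weakly expanding); otherwise you only get $\ind(\gamma_n)\le i$. Liao's lemma does give this, but the claim should be stated with the correct sign convention.

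Finally, your handling of near-singular passages (rescaled extended linear Poincar\'e flow, controlled by the Lorenz-like structure of each $\rho$ and $W^{ss}(\rho)\cap\Delta=\{\rho\}$) is the right idea and is where the hypotheses on the singularities earn their keep, but in a complete proof this control must be quantified before the selecting lemma can be applied uniformly.
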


Since $Y\in\cU_1$, the singularity $\sigma_Y$ is Lorenz-like and it follows from Proposition \ref{prop.sectional-expanding} that $\cN_{C(\sigma_Y,Y)}$ admits a dominated splitting $\cN_1\oplus\cN_2$ with index $2$. Moreover, since $\cU_1\subset \cU_0$ and by \eqref{eq.lorenz-like}, we have $W^{ss}(\sigma_Y,Y)\cap C(\sigma_Y,Y)=\{\sigma_Y\}$. Let us consider the following two cases:
\begin{enumerate}[label=(\arabic*)]
  \item either $\omega(x)\cap\Sing(Y)\neq\emptyset$, for every $x\in C(\sigma_Y,Y)$;
  \item or there exists $x\in C(\sigma_Y,Y)$ such that $\omega(x)\cap\Sing(Y)=\emptyset$, i.e. $\sigma_Y\notin\omega(x)$.
\end{enumerate}
If Case (1) happens, then all the conditions in Lemma \ref{lem.mixingdominated} are satisfied for $C(\sigma_Y,Y)$ and since $C(\sigma_Y,Y)$ can not intersect a homoclinic class, it follows that $C(\sigma_Y,Y)$ admits a partially hyperbolic splitting $T_{C(\sigma_Y,Y)}M=F^{ss}\oplus F^{cu}$ with respect to the tangent flow. One can see that the bundle $F^{cu}$ is exactly the sectionally expanding subbundle $E^Y$ given by Proposition \ref{prop.sect-exp}. Therefore, $C(\sigma_Y,Y)$ is singular hyperbolic, and by Corollary \ref{cor.unique-ls-class} it is also Lyapunov stable. By the result \cite[Corollary D]{PYY21}, the class $C(\sigma_Y,Y)$ contains a periodic orbit, which is a contradiction to the assumption that $\omega(x)\cap\Sing(Y)\neq\emptyset$ for all $x\in C(\sigma_Y,Y)$.
So we are left with Case (2), in which $C(\sigma_Y,Y)$ contains a non-singular compact invariant subset $\Gamma$. Then it follows from Lemma \ref{lem.unique-ls-class} that $W^u(\Gamma,Y)\cap W^s(Q_Y,Y)\neq\emptyset$. Hence $W^u(C(\sigma_Y,Y),Y)\cap W^s(Q_Y,Y)\neq\emptyset$. Applying Lemma \ref{lem.unique-ls-class} to $\Gamma=Q_Y$, we also obtain that $W^u(Q_Y,Y)\cap W^s(\sigma_Y,Y)\neq\emptyset$. Thus $Q_Y\subset C(\sigma_Y,Y)$.

This ends the proof of Proposition \ref{prop.ls-class}.

\noindent Ming Li, School of Mathematical Sciences, Nankai University, Tianjing, China\\
E-mail: limingmath@nankai.edu.cn
\vspace{0.5cm}

\noindent Fan Yang, Department of Mathematics, Michigan State University, MI, USA \\
E-mail: yangfa31@msu.edu
\vspace{0.5cm}

\noindent Jiagang Yang, Departamento de Geometria, Instituto de Matem\'{a}tica e
Estat\'{i}stica, Universidade Federal Fluminense, Niter\'{o}i, Brazil\\
E-mail: yangjg@impa.br
\vspace{0.5cm}

\noindent Rusong Zheng, Joint Research Center on Computational Mathematics and Control, Shenzhen MSU-BIT University, Shenzhen, China\\
E-mail: zhengrs@smbu.edu.cn

\end{document}